\long\def\unmarkedfootnote#1{{\long\def\@makefntext##1{##1}\footnotetext{#1}}}
\newcommand{\barint}{
         \rule[.036in]{.12in}{.009in}\kern-.16in
          \displaystyle\int  }
     \newcommand{\om}{\Omega}
     \newcommand{\ep}{{\varepsilon}}
\newcommand{\R}{{\mathbb{R}}}
\newcommand{\N}{{\mathbb{N}}}
\newcommand{\rn}{{\mathbb{R}^{n}}}
\newcommand{\Rm}{{\mathbb{R}^{m}}}
\newcommand{\rp}{{[0,\infty)}}
\newcommand{\ve}{{\varepsilon}}
\newcommand{\wt}{\widetilde}
\newcommand{\vt}{\vartheta}
\newcommand{\iO}{\int_{\Omega}}
\newcommand{\vp}{\varphi}
\newcommand{\bu}{{\bar{u}}}
\newcommand{\dv}{\mathrm{div}}
\newcommand{\PD}{{\Phi_\Diamond}}
\newcommand{\T}{{T_t}}
\newtheorem{definition}{Definition}[section]
\newtheorem{lemma}[definition]{Lemma}
\newtheorem{theorem}[definition]{Theorem}
\newtheorem{proposition}[definition]{Proposition}
\newtheorem{remark}[definition]{Remark}
\newtheorem{ex}{Example}
\newcommand{\nocontentsline}[3]{}
\newcommand{\tocless}[2]{\bgroup\let\addcontentsline=\nocontentsline#1{#2}\egroup}
\begin{document}

\title
[Fully anisotropic elliptic problems]{Fully anisotropic elliptic problems\\ with minimally integrable data
}

\frenchspacing
\numberwithin{equation}{section}

\author{Angela Alberico}\address{Angela Alberico\\  Istituto per le  Applicazioni del Calcolo ``M.
Picone'', Consiglio Nazionale delle Ricerche, Via Castellino 111,
80131 Napoli, Italy}
\email{\texttt{a.alberico@iac.cnr.it}}

\author{Iwona Chlebicka} \address{Iwona Chlebicka\\Institute of Applied
Mathematics and Mechanics, University of Warsaw\\ul. Banacha 2, 02-097  Warsaw, Poland} \email{\texttt{i.chlebicka@mimuw.edu.pl}}

\author{Andrea Cianchi}
\address{Andrea Cianchi\\ Dipartimento di Matematica e Informatica  ``U. Dini", Universit\`a di Firenze,  Viale Morgagni 67/A,  50134 Firenze, Italy  {\rm and } Istituto per le Applicazioni del  Calcolo ``M. Picone'', Consiglio Nazionale delle Ricerche, Via  Castellino 111, 80131 Napoli, Italy}  \email{\texttt{e-mail:andrea.cianchi@unifi.it}}

\author{Anna Zatorska-Goldstein} \address{Anna Zatorska-Goldstein\\  Institute of Applied Mathematics and Mechanics, University of Warsaw\\ul. Banacha 2, 02-097 Warsaw, Poland} \email{\texttt{azator@mimuw.edu.pl}}

\pagestyle{myheadings} \thispagestyle{plain}

\maketitle

\begin{abstract}
 We investigate   nonlinear elliptic
Dirichlet problems whose growth is driven by a general anisotropic $N$-function, which is not necessarily of power type and need not satisfy the $\Delta_2$ nor the $\nabla _2$-condition. Fully anisotropic, non-reflexive Orlicz-Sobolev spaces provide a natural functional framework associated with these problems.  Minimal integrability assumptions are detected on the datum on the right-hand side of the equation ensuring existence and uniqueness of weak solutions.  When merely integrable, or even measure, data are allowed, existence of suitably further generalized solutions -- in~the approximable sense -- is established. Their maximal regularity in Marcinkiewicz--type spaces is  exhibited as well. Uniqueness of approximable solutions is also  proved  in case of $L^1$--data.
\end{abstract}

\setcounter{tocdepth}{1}
\tableofcontents

\unmarkedfootnote {
\par\noindent {\it Mathematics Subject
Classification:} 35J25, 35J60, 35B65.
\par\noindent {\it Keywords:} Anisotropic elliptic equations,  Dirichlet problems,  Orlicz-Sobolev spaces,  $L^1$-data, measure data, approximable solutions, Marcinkiewicz spaces.}

\section{Introduction}\label{intro}

This paper concerns  Dirichlet problems for elliptic equations of the form
\begin{equation}\label{eq:main:f}
\begin{cases}
-\dv \, a(x,\nabla u) = f &\qquad \hbox{in $\Omega$}\\
u=0 &\qquad \hbox{on $\partial\Omega$\,,}
\end{cases}
\end{equation}
where $\Omega$ is a bounded open set
in $\rn$, $n \geq 2$,  $a: \om \times \rn\to \rn$ is a Carath\'eodory function and the function $f : \Omega \to \mathbb R$ is assigned.
\par
Second-order elliptic equations, in  divergence form, are a very classical theme in the theory of~partial differential equations, and have been extensively investigated in the literature. The punctum of the present contribution is in that, besides the standard monotonicity assumption
\begin{align}\label{A3}
(a(x,\xi) - a(x, \eta)) \cdot (\xi-\eta)> 0\qquad \hbox{for every
$\xi,\eta\in\rn$ such that $\xi\neq\eta$,}
\end{align}
for  a.e. $x\in\Omega$, the function $a$ is subject to very general coercivity and growth conditions, that embrace and considerably extend  customary instances. The leading hypotheses on $a$ amount to requiring that there exists   a (possibly fully anisotropic)  $N$-function
$\Phi:\rn\to\rp$  such that, for  a.e. $x\in\Omega$,
\begin{align}\label{A2'}
a(x,\xi)\cdot \xi \geq \Phi(\xi) \quad  \hbox{for every  $\xi\in\rn$,}
\end{align}
and
\begin{align}\label{A2''}
\widetilde{\Phi}(c_\Phi a(x, \xi))\leq  \Phi(\xi) +h(x) \quad \hbox{for every  $\xi\in\rn$}
\end{align}
for some positive constant $c_{\Phi}$ and some nonnegative  function $h\in L^1(\Omega)$. Here,  $\widetilde{\Phi}$ denotes the Young conjugate of $\Phi$. Of course, there is no loss of generality in assuming that $c_\Phi  \in (0,1)$. In particular,
condition \eqref{A2''} is  fulfilled if $a(x, \xi)$ satisfies the stronger inequality obtained on replacing  the left-hand side of \eqref{A2''}  by  $c_\Phi\widetilde{\Phi}(a(x, \xi))$.
\par
An $N$-function is an even convex function, vanishing at zero, 
decaying faster than linearly near zero and growing faster than linearly
near infinity. Its Young conjugate   is also an $N$-function and comes into play in an H\"older--type inequality for the Orlicz norm defined in terms of $\Phi$.
Precise definitions of $N$-function and Young conjugate   can be found in the next section, where a number of notions and properties concerning the unconventional functional framework associated with our analysis are recalled or proved.
\par
Let us just stress here that  $\Phi (\xi)$ does not have to depend on $\xi$ just through its length $|\xi|$, thus allowing for full anisotropy in the differential operator.  Moreover, in contrast to the assumptions imposed \mbox{on~$p$-Lap}lace-type equations,
$\Phi$ need not have a polynomial growth.
In fact, $\Phi$ is not even supposed to~fulfill the so-called $\Delta_2$-condition, nor the $\nabla_2$-condition, that are usually required as a~replacement for {homogeneity}  of $\Phi$.  The lack of these conditions on $\Phi$ results in non-reflexivity and non-separability of the Orlicz-Sobolev space $W^1_0L^\Phi(\om)$ built upon $\Phi$, a natural function space associated with problem~\eqref{eq:main:f}. \par
We are concerned with existence, uniqueness and regularity of solutions to the Dirichlet problem~\eqref{eq:main:f}. Our analysis initiates by discussing weak solutions to~\eqref{eq:main:f}, namely solutions $u$ that belong to the Orlicz--Sobolev space $W^1_0L^\Phi(\om)$, or, more precisely, to the corresponding Orlicz--Sobolev class. Due to the generality of the situation under consideration and, specifically, to the anisotropy and non-reflexivity of the involved function spaces, standard methods do not apply. Our approach combines various techniques, including approximation via isotropic operators, comparison with solutions to symmetrized problems, the use of sharp embedding theorems for Orlicz--Sobolev spaces. This enables us to exhibit  an optimal integrability assumption on the datum $f$, depending on the growth of $\Phi$ near infinity, for the existence of a (unique) weak solution to problem~\eqref{eq:main:f}. The relevant optimal assumption on $f$ amounts to its membership in a space of Orlicz--Lorentz type, which arises as an associate space of the optimal rearrangement-invariant target space in an anisotropic Orlicz-Sobolev embedding.
This is the content of Theorem~\ref{theo:boundex}.
Let us emphasize that this result is new even in the isotropic case, that is when $\Phi$ is a radial function.
\par
When $f$ is affected by poor integrability properties, existence of weak solutions to problem~\eqref{eq:main:f} is not guaranteed. This is well known even in the linear situation when the differential operator is the Laplacian. In particular, solutions that do exist in a yet weaker sense -- for instance, merely distributional solutions -- typically do not belong to the pertaining Sobolev space. Also, they need not be unique, as shown in \cite{serrin}.
\par
In this connection, after disposing  the issue of existence of weak solutions,  we drop any extra regularity  on $f$ besides plain integrability in $\om$, and address the question of existence of solutions \mbox{to the Di}richlet problem~\eqref{eq:main:f} in a suitably generalized sense. Our result with this regard is stated in~Theorem~\ref{theo:main-f}. Under the mere assumption that $f\in L^1(\om)$, it asserts the existence and uniqueness of~solutions, called  {\em approximable solutions} throughout, that are limits of~weak solutions to approximating problems with regular right-hand sides. Importantly, Theorem \ref{theo:main-f} also provides us with  maximal regularity of the solution $u$ and of its gradient $\nabla u$.  Such a regularity is properly described  in terms of~Marcinkiewicz--type spaces,  depending on $\Phi$. An anisotropic Orlicz--Sobolev embedding, with optimal Orlicz target space, is critical in dictating the form of these Marcinkiewicz--type spaces.
\par
Our approach to problem \eqref{eq:main:f}  with  right-hand side in $L^1(\om)$ carries over, in fact, to the case when $f$ is replaced by a measure with finite total variation in $\om$. The relevant result is stated in~Theorem~\ref{theo:main-mu}. Let us point out that, though existence and regularity of solutions hold exactly  under the same conditions as for  data in $L^1(\om)$, their  uniqueness is uncertain.  As  far as we know, this is an open problem even in case of standard isotropic nonlinear operators, such as the $p$-Laplacian.
\par
The literature on  elliptic equations, 
under such a broad  ellipticity condition as that  defined in terms of  $N$-functions $\Phi$, is quite limited -- see e.g.  \cite{A, AdBF1, AdBF3, AC, Ci-local, pgisazg1, gwiazda-ren-ell, pgisazg1}.
Our results answer some questions in their general theory, and  provide a unified framework for  results available for functions $\Phi$ of special forms.
\par
So-called operators  with $p$-growth,  modelled upon the $p$-Laplacian, correspond to the choice
\begin{equation}\label{Phi=p}
\Phi (\xi) = |\xi|^p \qquad \hbox{for $\xi \in \rn$,}
\end{equation}
with $p>1$.
The theory of equations governed by this kind of nonlinearity has been thoroughly developed since the sixties of the last century. The analysis of solutions that are  well suited to allow for right-hand sides in $L^1$ is more recent. Their systematic  study was initiated with the papers
\cite{bgSOLA} and \cite{LionsMurat}. Other contributions in this direction  include \cite{AFT, AM, BBGGPV, DMOP, dall, FS}.
\par
Existence and sharp regularity results for equations with non-polynomial growth and $L^1$ or measure data, but still in the isotropic and reflexive setting where
\begin{equation}\label{Phi=A}
\Phi (\xi) = A(|\xi|) \qquad \hbox{for $\xi \in \rn$}
\end{equation}
for some classical $N$-function of one variable satisfying both the $\Delta _2$ and $\nabla _2$-condition, are presented in~\cite{CiMa}. Previous researches along this direction can be found in \cite{renel3,Dong-Fang}. Results concerning  this kind of ellipticity, but involving more regular operators $a$,  or right-hand sides $f$ enjoying stronger integrability properties, are the subject of \cite{Baroni, BeckMingione, IC-gradest, IC-gradest2, Ci-bound, Donaldson, Gossez2, Gossez, Korolev, Lieberman, Mustonen, Ta79}.
\par
Elliptic  problems with  growth of the form
\begin{equation}\label{Phi=pi}
\Phi (\xi) = \sum _{i=1}^n |\xi_i|^{p_i} \qquad \hbox{for $\xi \in \rn$,}
\end{equation}
where $\xi = (\xi_1, \dots ,{\xi}_n)$, {$1<p_i<\infty,$ $i=1,\dots,n$,} provide a basic framework for physical models in the presence of anisotropy. They are the topic of diverse contributions, including  \cite{CaPa, CKP, ELM, IL, Marc, Stroff, Ve1}. The case of $L^1$ right-hand sides was considered in \cite{BGM} under some restrictions on the exponents $p_i$.  Note that functions as in \eqref{Phi=pi} are  particular examples of those given by
\begin{equation}\label{Phi=Ai}
\Phi (\xi) = \sum _{i=1}^n A_i(|\xi_i|) \qquad \hbox{for $\xi \in \rn$,}
\end{equation}
where $A_i$ are $N$-functions of one variable, which fall within the  frames of the present discussion.
\par
As an application of Theorems \ref{theo:boundex}, \ref{theo:main-f}  and \ref{theo:main-mu}, stated in  Section \ref{main}, optimal results are  offered in  the specific instances mentioned above. However,  let us again emphasize that our discussion covers more general situations than those described above and, importantly, allows for functions $\Phi$ that do not necessarily admit the split form \eqref{Phi=Ai}.   Examples which generalize  one from \cite{Tr} are provided by $N$-functions $\Phi$
of the form
\begin{equation}\label{Phi=full}
\Phi (\xi) = \sum _{k=1}^K A_k\Big(\Big|\sum_{i=1}^n \alpha _i^k\xi_i\Big|\Big) \qquad \hbox{for $\xi \in \rn$,}
\end{equation}
where $A_k$ are $N$-functions of one variable, $K \in \mathbb N$ and the coefficients $\alpha _i^k \in \mathbb R$ are arbitrary. A possible instance, when $n=2$, include the function
\begin{equation}
\label{trud}
\Phi (\xi) = |\xi_1 -\xi_2|^p + |\xi_1|^q\log (c+ |\xi _1|)^\alpha \quad \hbox{for $\xi \in \mathbb R^2$,  }
\end{equation}
where either $q\geq 1$ and $\alpha >0$, or $q=1$ and $\alpha >0$, the exponent  $p>1$, and $c$ is a sufficiently large constant for $\Phi$ to be convex. Another example   amounts to the function
\begin{equation}
\label{trud1}
\Phi (\xi) = |\xi_1 + 3 \xi_2|^p + e^{|2\xi_1-\xi_2|^\beta} -1  \quad \hbox{for $\xi \in \mathbb R^2$,  }
\end{equation}
with $p>1$ and $\beta >1$.
%

\section{Function spaces}\label{back}

Assume that $\Omega$ is a measurable subset of $\rn$, with $n\geq 1$,
having finite Lebesgue measure $|\om|$. Given $m \in \mathbb N$, we set
$$\mathcal M(\om {{;}} \R^m)=\{U:   \; \hbox{$U$ is a   measurable function from $\om$ into $\mathbb R^m$}\}\,.$$
When $m=1$,
we shall make  use of the abridged notation $\mathcal M(\om)$ for $\mathcal M(\om; \R)$. An analogous simplification will be employed in the notation of other function spaces without further mentioning.
\\ Given  $u \in \mathcal M(\om)$, we define the \textit{distribution
function} $\mu_{u} : [0, \infty) \to [0, \infty)$ as
\begin{equation} \label{muu} \mu_{u}(t)=|\{x \in \Omega:
|u(x)|>t\}| \quad \hbox{for $t\geq 0$,}
\end{equation}
and the  \textit{decreasing rearrangement} $u^*: [0, \infty)
\rightarrow [0, \infty]$ as
\begin{equation}\label{decreasing rearrangement}
u^*(s)= \inf\{t \geq 0:\ \ \mu_u(t)   \leq s\}  \qquad\text{for}\ s
\geq 0.
\end{equation}
The function $u^*$ is equimeasurable with $u$ and right-continuous.
The  function $u^{**} :(0, \infty) \to [0, \infty ]$,   called the {\em maximal rearrangement} of $u^*$ and given by
\begin{equation}\label{u^**}
u^{**}(s)\, = \, {\frac 1 s}\, \int_0^{s}u^*(r) dr \, \qquad
\hbox{for} \ s >0 \,,
\end{equation}
is non-increasing, and satisfies  $u^* \leq u^{**}$.
\\ A \textit{Banach function space} $X(\om)$ (in the sense of Luxemburg \cite{BS}) of
functions in $\mathcal{M}(\om)$  is called a~\textit{rearrangement-invariant}
space if its norm $\|\cdot\|_{X(\om)}$ satisfies
\begin{equation}
\label{N6} \|u\|_{{X(\om)}} \, = \, \| v\|_{{X(\om)}} \,\,\,   \
\text{whenever}  \,\,\, u^\ast = v^\ast\,.
\end{equation}
 If  $X(\om)$ is a rearrangement-invariant space,  then
\begin{equation}\label{imm}
L^\infty (\om)     \to  X(\om)    \to L^1(\om),
\end{equation}
where $\to$ stands for a continuous embedding.
\\
Let  $X(\om)$ be a rearrangement-invariant space. Its
\textit{associate space}   is the rearrangement-invariant space
$X'(\om)$ equipped with the norm given by
\begin{equation} \label{n.assoc.}
\|u\|_{X'(\om)}=  \sup \bigg\{\int _{\om} |u(x)v(x)|\, dx:\   \| v
\|_{{X(\om)}} \leq 1 \bigg\}.
\end{equation}
The space $X'(\om)$  is contained in
 the topological dual of $X(\om)$, denoted  by
$X(\om)^*$, but need not coincide with the latter.
\\
Let $\varrho : (0, |\om|) \to (0, \infty)$ be a continuous
increasing function. We denote by $L^{\varrho(\cdot),\infty}(\om)$ the
\textit{Marcinkiewicz--type space  associated with
$\varrho$}, and defined as
$$L^{\varrho(\cdot),\infty}(\om) = \bigg\{u\in \mathcal M(\om):\ \
\hbox{there exists $\lambda >0$ such that}\,\, \sup _{s \in (0,
|\om|)} \tfrac {u^{*}(s)}{\varrho ^{-1}(\lambda/s)}< \infty\bigg\}.$$
 Note that $L^{\varrho(\cdot),\infty}(\om)$  is not always a normed space.
Special choices of the function
$\varrho$ recover standard spaces of weak type. For instance, if
$\varrho (s) = s ^q$ for some $q >0$, then $L^{\varrho(\cdot),\infty}(\om)= L^{q, \infty}(\Omega)$, the~customary
weak--$L^q(\Omega)$ space. When $\varrho (s)$ behaves like $s^q (\log
s)^\beta$ near infinity for some $q >0$ and $\beta \in \mathbb R$,
we shall adopt the notation $L^{q, \infty}(\log L)^\beta (\om)$ for
$L^{\varrho(\cdot),\infty}(\om)$. The meaning of the notation $L^{q ,
\infty}(\log L)^\beta (\log \log L)^{-1}(\om)$ is analogous.
\\
Orlicz and Orlicz--Lorenz spaces generalize Lebesgue and Lorentz spaces,
respectively, and are classical instances of rearrangement-invariant
spaces. Together with their anisotropic counterpart and with the
associated Sobolev type spaces, they play a critical role in our
discussion. Their definitions and basic properties are recalled in
what follows.

\subsection{Orlicz, Orlicz-Lorentz and Orlicz-Sobolev spaces}

We say that a function $A: [0, \infty) \to [0, \infty]$ is a {\em
Young function} if it is convex, vanishes at $0$, and is neither
identically equal to $0$, nor to~infinity. A Young function $A$
which is finite-valued, vanishes only at $0$ and satisfies the
additional growth conditions
\begin{equation}\label{limA}\lim _{t \to
0}\frac{A (t)}{t}=0 \qquad \hbox{and} \qquad \lim _{t \to \infty
}\frac{A (t)}{t}=\infty \,,
\end{equation}
is called an {\em $N$-function}.
\\
The {\em Young conjugate} of a Young function $A$ is the Young
function $\widetilde{A}$ defined by
$$\widetilde{A}(t)=\sup\{s t - A(s): s\geq 0\} \qquad  \hbox{for $t\geq 0$}\,.$$
Hence,
\begin{equation}
\label{young}
st \leq A(s) + \widetilde A(t) \qquad \hbox{for $s,t \geq 0$.}
\end{equation}
Note that $\widetilde{({\widetilde A})}   = A$ for any Young
function $A$. 
The class of $N$-functions is closed under the operation of Young conjugation. One has that
\begin{equation}\label{AAtilde}
t\leq  \widetilde{A}^{-1} (t) A^{-1}(t) \leq 2t \qquad \hbox{for
$t\geq 0$}\,,
\end{equation}
where $A^{-1}$ stands for the (generalized) left-continuous inverse
of $A$. Hence,
\begin{equation}\label{AAtilde'}
\frac{A(t)}t\leq \widetilde{A}^{-1} (A(t))\leq 2 \, \frac{A(t)}t\qquad
\hbox{for $t\geq 0$}\,.
\end{equation}
 A Young function $A$ fulfils the \emph{$\Delta_2$-condition near infinity}  if $A$ is finite--valued and there exist constants $c>0$ and $t_0\geq 0$ such that $A(2t)\leq cA(t)$ for $t\geq t_0$.
\\
A function $A$  is said to satisfy the  \emph{$\nabla_2$-condition near infinity} if
 there exist  constants $c>2$ and $t_0\geq 0$ such that $A(2t)\geq cA(t) $ for $t\geq t_0$.
 \\ We shall also write \lq\lq{}$A\in \Delta_2$ near infinity\rq\rq{}  and \lq\lq{}$A\in \nabla_2$ near infinity\rq\rq{} to denote these properties.\\
%
One has that $A\in \Delta_2$ near infinity if and only if  $\widetilde{A}\in \nabla_2$ near infinity.
\\
 We say that a Young function $A$\emph{ dominates} another Young function
 $B$ near infinity, if there exist constants $c>0$ and $t_0\geq 0$ such
 that $B(t)\leq A(ct)$  if $t\geq t_0$.
 If two Young functions $A$ and $B$ dominate
each other near infinity, then we say that they are
\emph{equivalent near infinity}.
\\
A Young function $A$ is said to \emph{increase essentially faster}
than $B$ near infinity,  if
\begin{equation}\label{ess_slow2}
\lim_{t\to +\infty} \frac{A^{-1}(t)}{B^{-1}(t)}= 0 \,.
\end{equation}

\smallskip
\par

Let $\Omega$ be a measurable set  in $\rn$, $n\geq 1$, with $|\om|< \infty$, and let $A$ be a Young function.  The {\em Orlicz class} $\mathcal L ^A(\om)$ is defined as
\begin{equation}\label{Orlicz_class}
\mathcal L ^A(\om) = \bigg\{u \in \mathcal M(\om): \int_{\Omega } A\left
(|u|\right)\; dx< \infty\bigg\}.
\end{equation}
The set $\mathcal L ^A(\om)$ is convex, but it is not a linear space in general.
The {\emph{Orlicz space}} $L^A(\Omega)$  is the set of~all functions $u\in
\mathcal M(\om)$ such that the Luxemburg norm
\begin{equation}\label{Orlicz_norm}
\|u\|_{L^A(\Omega)} = \inf\bigg\{\lambda>0: \int_{\Omega } A\left
(\tfrac{1}{\lambda}|u|\right)\; dx \leq 1\bigg\}
\end{equation}
is finite. The space $L^A(\Omega)$ equipped with this norm is a
Banach space. It is the smallest vector space containing $\mathcal L^A(\om)$. In particular, one has that $L^A(\Omega) =
L^p(\Omega)$ if $A(t) =t^p$ for some $p\in[1, \infty)$, and
$L^A(\Omega) = L^\infty(\Omega)$ if $A(t) = \infty\chi_{(1,
\infty)}(t)$. Here, and in what follows, $\chi_E$ stands for the characteristic function of a set $E$.
\\
A H\"{o}lder--type inequality in the Orlicz setting reads
\begin{equation}\label{Holder_ineq_A}
\int_{\Omega} |uv|\; dx \leq 2 \|u\|_{L^A(\Omega)}
\|v\|_{L^{\widetilde{A}}(\Omega)}
\end{equation}
for every $u\in L^A(\Omega)$ and $v\in L^{\widetilde{A}}(\Omega)$.
\\
 Let $A$ and $B$ be two Young functions. Then
\begin{equation}\label{embedd}
L^A(\Omega)\rightarrow L^B(\Omega)\qquad \hbox{if and only
if}\qquad\hbox{$A$ dominates $B$ near infinity}\,.
\end{equation}
Here, the arrow  \lq\lq{}$\rightarrow$\rq\rq{} stands for continuous embedding.
In particular, $L^A(\Omega)\rightarrow L^1(\Omega)$ for any Young function $A$.
Hence,
\begin{equation}\label{embedd1}
L^A(\Omega)=L^B(\Omega)\qquad \hbox{if and only
if}\qquad\hbox{$A$ is equivalent to $B$ near infinity,}
\end{equation}
where the equality has to be interpreted  up to equivalent norms.
\\
 Let us next set
\begin{equation}\label{Eiso}E^A(\om) = \bigg\{u\in {\mathcal M} (\om):\ \  \int_{\Omega } A\left
(\tfrac{1}{\lambda}|u|\right)\; dx < \infty \,\,\hbox{for every $\lambda >0$}\bigg\}\,.
\end{equation}
The space
$E^A(\Omega)$ agrees with the closure in $L^A(\Omega)$, in the norm topology, of the
space of functions which are bounded in $\Omega$ and
have bounded  support. Trivially,
\begin{equation}\label{A4}
E^A(\Omega)\subset\mathcal{L}^A(\Omega)
\subset L^A(\Omega)\,.
\end{equation}
Both inclusions hold as equalities in \eqref{A4} if and only if $A$
satisfies the $\Delta_2$-condition near infinity.
\\
If  $A$ increases essentially faster than $B$  near infinity, then
\begin{equation}\label{emb_AB}
L^A(\Omega)\rightarrow E^B(\Omega)\,.
\end{equation}
The alternative notation $A(L)(\om)$ will also be employed, when convenient, to denote the  Orlicz space associated with any Young function equivalent to $A$ near infinity. For instance, if $\alpha >0$, then  $\exp L^\alpha  (\om)$ stands for the Orlicz space built upon a~Young function equivalent to $e^{t^\alpha}$ near infinity. Moreover, if~either $p >1$ and $\alpha \in \mathbb R$, or $p=1$ and $\alpha \geq 0$, then the space $L^p \log^\alpha L (\om)$ denotes the Orlicz space associated with a~Young function equivalent to $t^p \log^\alpha t$ near infinity.
\\
Given a Young function $A$ and  {\color{blue} $r\in (-\infty, \infty]\setminus \{0\}$}, we denote
by  $L[A,r](\om)$  the Orlicz-Lorentz-type space of those {functions}
 $u \in \mathcal M(\om)$ such that the quantity
\begin{equation}\label{dualoptnorm}
\|u\|_{L[A,r](\om)}= \|s^{\frac{1}{r}}u^{**}(s)\|_{L^A(0, |\om|)}
\end{equation}
is finite. Here, and in what follows, we use the convention that $\tfrac 1\infty=0$. The space $L[A,r](\om)$ is  a rearrangement-invariant
space. It is non-trivial, namely it contains functions that do
not vanish identically, if $ \|s^{\frac{1}{r}}\|_{L^A(0, |\om|)}<
\infty$. In analogy with $E^A(\om)$, we define
\begin{equation}\label{E}
E[A,r](\om) = \bigg\{u\in \mathcal M(\om):
\int_0^{|\om|}A\Big(\tfrac{1}{\lambda}{s^{\frac{1}{r}}u^{**}(s)}
\Big)\, ds <\infty \;\; \hbox{for every $\lambda >0$}\bigg\}.
\end{equation}
Similarly,  we denote by  $L(A,r)(\om)$  the {Orlicz-Lorentz-type
space} of {all functions} $u \in \mathcal M(\om)$ for which the expression
\begin{equation}\label{optnorm}
\|u\|_{L(A,r)(\om)}= \|s^{\frac{1}{r}}u^*(s)\|_{L^A(0, |\om|)}
\end{equation}
is finite. The space $E(A,r)(\om)$ is defined accordingly.
  Under suitable assumptions on $A$ and $r$ the functional defined by
\eqref{optnorm} is a norm, and, consequently, $L(A,r)(\om)$ is a
rearrangement-invariant space equipped with this norm --
 see \cite{cianchi_ibero}. {In particular, for any Young function $A$, formula
\eqref{optnorm} defines a norm provided that  $r<-1$.
\\ The special instance corresponding to  $L^A(0, |\om|)= L^q (0, |\om|)$
yields
\begin{equation}\label{lorentz}
 L(A,r)(\om)=  E(A,r)(\om)=L^{p,q}(\om)\,,
\end{equation}
up to equivalent norms, provided that $p$ and $r$ are properly chosen. Here, $L^{p,q}(\om)$ denotes the customary Lorentz space of those functions $u \in \mathcal M (\om)$ making the quantity
\begin{equation}\label{lorentznorm}
\|u\|_{{L^{p,q}}(\om)}= \big\|s^{\frac 1p- \frac 1q}u^*(s)\big\|_{L^q(0, |\om|)}
%
%
\end{equation}
 finite.
Also, with a proper choice of $p$ and $r$,
\begin{equation}\label{lorentz**}
 L[A,r](\om)=  E[A,r](\om)=L^{[p,q]}(\om)\,,
\end{equation}
up to equivalent norms, where $L^{[p,q]}(\om)$ denotes the   Lorentz space equipped with the norm given by
\begin{equation}\label{lorentznorm**}
\|u\|_{L^{[p,q]}(\om)}=\big\|s^{\frac 1p- \frac 1q}u^{**}(s)\big\|_{L^q (0, |\om|)}
%
%
\end{equation}
 for $u \in \mathcal M (\om)$.
}
\\
When $L^A(0, |\om|)= L^q \log^\alpha L(0, |\om|)$,
where  either $q \in (1, \infty]$ and $\alpha \geq 0$, or $q=1$ and $\alpha \geq 0$,
one has that
\begin{equation}\label{lorentzzug}
 L[A,r](\om)=  E[A,r](\om)=L^{[p,q]}(\log L)^\alpha (\om)\,,
\end{equation}
up to equivalent norms, again with a suitable choice of $p$ and $r$ -- see e.g.  \cite[Lemma~6.12, Chapter~4]{BS}. Here, $L^{[p,q]}(\log L)^\alpha (\om)$ denotes the
Lorentz-Zygmund space equipped with the norm defined as
\begin{equation}\label{LZ}
\|u\|_{L^{[p,q]}(\log L)^\alpha(\om)} = \big\|s^{\frac 1p- \frac 1q}\log^{\frac \alpha q} \big(1+\tfrac{|\om|}{s}\big)u^{**}(s)\big\|_{L^q (0, |\om|)}
%
%
\end{equation}
for $u \in \mathcal M (\om)$. An analogous relation links the spaces
 $L(A,r)(\om)$, $E(A,r)(\om)$ and $L^{p,q}(\log L)^\alpha (\om)$, where the latter is defined as the set of all functions $u \in \mathcal M (\om)$ which render the right-hand side of~equation \eqref{LZ}, with $u^{**}$ replaced by $u^*$, finite.
%

\smallskip
\par
Assume now that $\Omega$ is an open set in $\rn$,  $n\geq 2$, with $|\om|<\infty$.
 We define the {\em Orlicz--Sobolev  class}
\begin{align}\label{defWclass}
{W}^1_0 \mathcal L^A (\Omega) = \{  u \in \mathcal M(\Omega) :\   &\, \hbox{ the
continuation of $u$ by $0$ outside  $\Omega$ is} \\ \nonumber &
\hbox{ weakly differentiable and $|\nabla u|\in\mathcal L^A(\Omega)$}\}.
 \end{align}
The {\em Orlicz--Sobolev space} ${W}^1_0 L^A (\Omega)$  is defined analogously, on replacing $\mathcal L^A(\Omega)$ by $L^A(\Omega)$ on the right-hand side of definition \eqref{defWclass}.
%
%
 The space $W^1_0 L^A (\Omega)$, endowed with the norm
\begin{equation}\label{sobolev_zero-norm}
\|u\|_{W^1_0L^A(\Omega)} = \|\,|\nabla u|\,\|_{L^A(\Omega)}\,,
\end{equation}
is a Banach space.  
 Note that, thanks to a Poincar\'{e}-type inequality -- see \cite[Lemma 3]{Ta90} --
the norm defined by~\eqref{sobolev_zero-norm} is equivalent to the norm given by $\|u\|_{L^A(\Omega)} + \| \,|\nabla u|\,\|_{L^A(\Omega)}$.
\\
 In the case when $L^A(\om) =L^p(\om)$ for some $p \in [1, \infty)$ and $\partial \Omega$ is regular enough, the above
  definition  of~${W}^1_0 L^A (\Omega)$ reproduces the usual space
  $W^{1,p}_0 (\Omega)$ defined as the closure in $W^{1,p}(\Omega)$ of the space
  $ {C }_0^\infty (\Omega)$ of~smooth compactly supported functions in $\Omega$.
  On the other hand, the set of~smooth bounded functions is dense in
$L^A(\Omega)$  if and only if $A$ satisfies the $\Delta_2$-condition,
and hence, for arbitrary $A$, our
definition of~${W}^1_0 L^A (\Omega)$ yields a space which can be
larger than the closure of $ {C }_0^\infty (\Omega)$ with respect to  the norm in~\eqref{sobolev_zero-norm}. A systematic study of Orlicz-Sobolev spaces was initiated in \cite{DoTr}. An account of~more recent developments can be found in \cite{rao-ren, rao-ren'}.

\subsection{Anisotropic Orlicz and Orlicz--Sobolev spaces}

 A function $\Phi:\rn\to [0,\infty]$ is called an \emph{$n$-dimensional Young function}
 if it is convex, $\Phi(0) = 0$, $\Phi(\xi)=\Phi(-\xi)$ for
$\xi\in \rn$, and  $\{\xi \in \rn: \Phi(\xi) \leq t\}$ is a compact set containing $0$ in its interior for every $t>0$.
\\ The function $\Phi$ is called an
\emph{$n$-dimensional $N$-function} if, in addition, $\Phi$ is finite--valued,
vanishes only at $0$, and
\begin{equation}\label{lim}\lim _{\xi \to
0}\frac{\Phi (\xi)}{|\xi|}=0 \qquad \hbox{and} \qquad \lim _{|\xi| \to
\infty }\frac{\Phi (\xi)}{|\xi|}=\infty\,.
\end{equation}
Notice that, for technical reasons and  ease of presentation, in the case when $n=1$ we are distinguishing  Young functions or $N$-functions, as defined on $[0, \infty)$ as in the previous subsection, from $1$-dimensional Young functions or $1$-dimensional $N$-functions defined on the whole of $\mathbb R$ here. However, extending a
Young function to an~even function on the entire $\mathbb R$
results in a $1$-dimensional Young function; conversely, the
restriction of a $1$-dimensional Young function to $[0, \infty)$ is
a Young function. Thus, any definition or result concerning Young
functions or $N$-functions translates into a corresponding definition or result for
$1$-dimensional Young functions or $N$-functions, and viceversa.
\\  In what follows, Young or $N$-functions will be denoted by latin capital letters, whereas $n$-dimensional Young or $N$-functions will be denoted by greek capital letters. Thus, there will be no ambiguity if we simply write  Young function or $N$-function when referring to an $n$-dimensional function.
%
%
\\
\emph{The Young conjugate} of a Young function $\Phi$ is the  Young function $\wt \Phi$
defined as
$$\wt \Phi (\xi) = \sup\{\eta \cdot \xi - \Phi(\eta):\ \eta \in \rn\}\quad \hbox{for $\xi \in \rn$\,.}
$$
Here, the dot $\lq\lq \cdot "$ denotes scalar product in $\rn$.
One has  that $\widetilde{({\widetilde \Phi})}   = \Phi$ for any
   Young function
$\Phi$. The class of $N$-functions is closed under the operation of Young conjugation. \\
 A  Young function $\Phi$ is said to satisfy the \emph{$\Delta _2$-condition near infinity}, briefly $\Phi \in \Delta _2$ near infinity,  if it is finite--valued and there
exist positive constants $c$ and $M$ such that $\Phi (2\xi) \leq c \Phi(\xi)$   if $|\xi|\geq M$.
\\ A Young function $\Phi$ is said to satisfy the \emph{$\nabla
_2$-condition near infinity}, briefly $\Phi \in \nabla _2$ near
infinity,  if~there exist  constants $c>2$ and $M>0$ such that $\Phi (2\xi) \geq c \Phi(\xi)$   if $|\xi|\geq M$.
\\
The  Young function $\Phi$ is said to \emph{dominate} another
Young function $\Psi$ \emph{near infinity} if there exist  positive constants
$c$ and $M$ such that $\Psi (\xi) \leq \Phi (c\xi)$  if $|\xi| \geq M$.
 Equivalence of  Young functions is defined accordingly.
\par
Let $\Omega$ be a measurable set in $\rn$,  $n\geq 1$, with $|\om|<\infty$, and let $\Phi$ be an $n$-dimensional Young function.  The  \textit{anisotropic Orlicz space}
$L^\Phi(\om;\rn)$ is the set of all vector-valued functions $U\in \mathcal M(\om; \rn)$ such that
the norm
$$\|U\|_{L^\Phi (\om;\rn)} = \inf \bigg\{\lambda >0: \int _\om\Phi\big(\tfrac 1\lambda U \big)\, dx \leq 1\bigg\}$$
is finite. The space  $L^\Phi (\om;\rn)$, equipped with this norm,
is a Banach space. The   Orlicz class $\mathcal L^\Phi(\om, \rn)$ and the space $E^\Phi(\Omega;\rn)$ are defined in analogy with definitions \eqref{Orlicz_class} and  \eqref{Eiso}, respectively. One has that
 $E^\Phi(\Omega;\rn)$ agrees with the
closure in $L^\Phi(\Omega;\rn)$ of the space of bounded functions in $\om$ with bounded support.
Clearly,
\begin{equation} \label{EPP}
E^\Phi(\Omega;\rn) \subset  \mathcal L^\Phi(\Omega;\rn) \subset L^\Phi(\Omega;\rn),
\end{equation}
and both inclusions hold as equalities if
and only if $ \Phi\in\Delta_2$ near infinity.
%
The H\"older-type inequality
\begin{equation}\label{holder}
\int _\om |U \cdot   V|\,dx \leq 2\|U\|_{L^\Phi (\om;\rn)}
\|V\|_{L^{\widetilde\Phi} (\om;\rn)}
\end{equation}
holds for every $U \in L^\Phi (\om;\rn)$ and $V \in
L^{\widetilde\Phi} (\om;\rn)$.
\\
If $\Phi$ and $\Psi$ are  Young functions,   then
\begin{equation*}
   L^\Phi (\om;\rn)\to L^\Psi (\om;\rn) \quad \hbox{if and only if} \quad \hbox{$\Phi$
dominates $\Psi$ near infinity.}
\end{equation*}
 In
particular, $ L^\Phi (\om;\rn)\to L^1 (\om;\rn)$
%
for any   Young function $\Phi$. Moreover,
\begin{equation*}
   L^\Phi (\om;\rn)=L^\Psi (\om;\rn) \quad \hbox{if and only if} \quad \hbox{$\Phi$ and $\Psi$ are equivalent near infinity.}
\end{equation*}
By  \cite[Corollary 7.2]{Sch}, given any $N$-function $\Phi$, the space $L^\Phi (\om; \rn)$ is reflexive if and only  if $\Phi \in \Delta _2 \cap \nabla _2$ near
infinity.
In general,  if $\Phi$ is an arbitrary $n$-dimensional  $N$-function, then
\begin{equation}\label{Dual}
\hbox{the dual
of $E^{\Phi}(\Omega;\rn)$ is isomorphic and homeomorphic to
$L^{\widetilde{\Phi}}(\Omega;\rn)$,}
\end{equation}
see \cite[Proposition 2.3]{AdBF1}.
%
%
Orlicz spaces of vector-valued functions are studied in detail in \cite{Sk1, Sk2}, as special cases of more general Musielak-Orlicz spaces; the analysis of the paper  \cite{Sch} also includes Orlicz spaces of functions defined on infinite dimensional spaces.
\smallskip
\par
Assume that $\Omega$ is an open set in $\rn$,  $n\geq 2$, with $|\om|<\infty$. Let $\Phi$ be  an $n$-dimensional  Young function. The \emph{anisotropic Orlicz-Sobolev class} is
defined as
\begin{align}\label{aniso_Wclass}
{W}^{1}_0\mathcal L^\Phi(\Omega)= \{u\in{\mathcal M}(\Omega)\,:\ & \hbox{ the continuation
of $u$ by $0$ outside $\Omega$} \\ \nonumber  & \hbox{ is weakly differentiable
in $\rn$ and $\nabla u \in \mathcal L^\Phi (\Omega{; \rn})$}\}.
\end{align}
The \emph{anisotropic Orlicz-Sobolev space} ${W}^{1}_0L^\Phi(\Omega)$ is defined accordingly,
on replacing $\mathcal L^\Phi (\Omega{; \rn})$ by~$ L^\Phi (\Omega{; \rn})$ on the right-hand side of equation \eqref{aniso_Wclass}.
%
One has that $W^{1}_0L^\Phi(\Omega)$, equipped with the norm
$$\|u\|_{{W}^{1}_0L^\Phi(\Omega)} = \|\nabla u\|_{L^\Phi (\Omega{; \rn})},$$ is a Banach space.
The Orlicz-Sobolev space ${W}^{1}_0L^\Phi(\Omega)$ is reflexive if and only if $\Phi\in \Delta _2 \cap \nabla _2$ near infinity.  Classical contributions on Orlicz-Sobolev spaces  are \cite{Tr} and \cite{Klimov76}.
\\
The use of sets of
functions, whose truncations belong to an Orlicz-Sobolev space, is crucial in dealing with approximable solutions.  Given any
$t > 0$, let $T_t : \R\rightarrow \R$ denote the function defined by
\begin{equation}\label{trunc}
 T_t(s) = \begin{cases} s & \quad \hbox{if
$\ |s|\leq t$\,, }
\\
t\,{\rm sign}\, (s) & \quad \hbox{if $\ |s|> t$\,.}
\end{cases}
\end{equation}
We set \begin{equation}\label{def_Trun_Space} \mathcal{T}_0^{1,
\Phi}(\Omega) = \{ u \in \mathcal M(\om) :\  T_t(u)\in
{W_0^1L^\Phi}(\Omega) \;\; \hbox{for every $t>0$}\}\,.
\end{equation}
The space $\mathcal{T}_0^{1, \Phi}(\Omega)$ is the anisotropic counterpart of   the space introduced in \cite{BBGGPV} and associated with the standard Sobolev space $W^{1,p}_0(\om)$ corresponding to the choice $\Phi(\xi)=|\xi|^p$.
\\
 If $u\in\mathcal{T}_0^{1,
\Phi}(\Omega)$, then there exists a (unique) measurable function
$Z_u : \Omega \to \rn$ such that
\begin{equation}\label{gengrad} \nabla T_t(u) = \chi_{\{|u|<t\}} Z_u\qquad \hbox{
a.e. in $\Omega$}
\end{equation}
for every $t > 0$. This is a consequence of \cite[Lemma~2.1]{BBGGPV}. One has that $u\in W_0^1L^\Phi(\Omega)$ if and only
if $u\in\mathcal{T}_0^{1, \Phi}(\Omega)$
and
$Z_u\in L^\Phi(\Omega; \rn)$.
In the latter case, $Z_u = \nabla u$ a.e.
in $\Omega$. With an abuse of~notation, for every $u\in
\mathcal{T}_0^{1, \Phi}(\Omega)$ we denote $Z_u$ simply by $\nabla
u$ throughout.

\subsection{Auxiliary  functions associated with $\Phi$}

Let $\Phi$ be an $n$-dimensional  Young function.
By
 $\Phi_\circ : [0, \infty ) \to [0, \infty)$ we denote the Young function obeying
\begin{equation}\label{phistar}
|\{\xi \in \rn: \Phi_\circ (|\xi|) \leq t\}|  =|\{\xi \in \rn: \Phi
(\xi)\leq t\}|  \quad \hbox{for $t\geq 0$.}
\end{equation}
The function $\rn \ni \xi \mapsto \Phi _\circ(|\xi|)$ can be regarded as a kind of  \lq\lq average in measure\rq\rq  \, of $\Phi$. It
 can be used to define  the radially increasing symmetral $\Phi _\bigstar : \rn
\to [0, \infty)$ of $\Phi$ by
$$\Phi _\bigstar (\xi) = \Phi_\circ (|\xi|) \qquad \hbox{for $\xi \in \rn$.}$$
Since $\Phi_\bigstar$ is radially symmetric, the function $\Phi_\Diamond : [0,\infty)\to [0,\infty)$, defined by
\begin{equation} \label{phidiamond}
\Phi_\Diamond (|\xi|) =
\widetilde{\big({\widetilde\Phi}_\bigstar\big)}(\xi) \quad \hbox{for $\xi \in \rn$,}
\end{equation}
is a Young function. Moreover, the function $\Phi_\Diamond$  is equivalent to $\Phi_\circ$, and  there exist constants $c_1=c_1(n)$
and $c_2=c_2(n)$ such that
\begin{equation}\label{equivdiamond}
\Phi_\circ (c_1 t) \leq \Phi_\Diamond (t) \leq \Phi_\circ (c_2 t)
\quad \hbox{for $t\geq 0$,}
\end{equation}
see~\cite[Lemma~7]{Klimov76}.
Note that  if $\Phi$ is an  $n$-dimensional  $N$-function, then the functions $\Phi_\circ$ and $\Phi_\Diamond$ are $1$-dimensional $N$-functions and $\Phi_\bigstar$ is  an $n$-dimensional  $N$-function.
\\
Two more functions associated with $\Phi$, denoted by $\Phi_n$ and  $\widehat{\Phi_\circ}$, will be introduced in the next section in connection with Orlicz-Sobolev type embeddings.
%
\\
Some auxiliary functions depending on $\Phi$ will still be needed.
 We denote by $\Psi_\circ : [0,
\infty) \to [0, \infty)$   the increasing function given by
\begin{equation}\label{psicirc}
\Psi_\circ (t) = \frac{\Phi_\circ(t)}{t}
\qquad \hbox{for $t>0$ \quad
and $\quad \Psi_\circ (0)=0$.}
\end{equation}
Also, we call $\Psi _\Diamond : [0, \infty) \to [0,\infty)$  the
increasing function given by
\begin{equation}
\label{Psi} \Psi _\Diamond (t) = \frac{\Phi_\Diamond (t)}{t} \qquad
\hbox{for $t>0\quad$ and $\quad \Psi_\Diamond (0)=0$.}
\end{equation}
The function $\Theta:\rn\to\rp$ is defined as
\begin{equation}
\label{andrea3}\Theta(\xi) = \widetilde
{\Phi_{\Diamond}}^{-1}(\Phi(\xi)) \quad \hbox{for $\xi \in \rn$,}
\end{equation}
and the function $\Theta_\Diamond:\rp\to\rp$   as
\begin{equation}
\label{andrea4} \Theta_\Diamond  (t) = \widetilde
{\Phi_{\Diamond}}^{-1}(\Phi_\Diamond(t))\quad \hbox{for $t \geq 0$.}
\end{equation}
Relations among the functions introduced above are the subject of the following lemma.

\begin{lemma}\label{lem:aux-aniso-est}
Let $\Phi : \rn \to [0, \infty)$ be an $n$-dimensional $N$-function, and let
$\Phi_\Diamond$, $\Psi_\Diamond$, $\Theta$, and  $\Theta_\Diamond$
be the functions associated with $\Phi$ as in \eqref{phidiamond},
\eqref{Psi},  \eqref{andrea3} and \eqref{andrea4}, respectively.
Then
\begin{itemize}
\item[(i)]  $\Phi_\Diamond \circ \Theta_\Diamond^{-1}= \widetilde {\Phi_\Diamond}$,
\item[(ii)]  $\Phi_{\Diamond}\circ \Theta_\Diamond^{-1}\circ \Theta = \Phi$,
\item[(iii)] $\Phi_\Diamond^{-1}\big(t\Psi_\Diamond ^{-1}(t)\big) = \Psi_\Diamond ^{-1}(t)$ \quad {\rm for} $t \geq 0$,
\item[(iv)] $\Theta_\Diamond \big(\Psi_\Diamond^{-1}\big)(t)\leq 2t$\quad
{\rm for} $t \geq 0$.
 \item[(v)]
$\Phi _\Diamond \big(\Psi _\Diamond ^{-1}(t/2)\big) \leq
\widetilde{\Phi_\Diamond} (t) \leq \Phi _\Diamond \big(\Psi
_\Diamond ^{-1}(t)\big)$ \quad {\rm for} $t \geq 0$.
\end{itemize}
\end{lemma}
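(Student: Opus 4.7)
The entire lemma is a sequence of identities and inequalities obtained by unwinding the definitions of $\Phi_\Diamond$, $\Psi_\Diamond$, $\Theta$, $\Theta_\Diamond$ and combining them with the universal estimate \eqref{AAtilde'} applied to the $1$-dimensional $N$-function $\Phi_\Diamond$ (and its conjugate $\widetilde{\Phi_\Diamond}$). The plan is to dispatch (i)--(iv) in order by direct substitution, and then to base (v) on the sharper monotonicity of the ratio $\Psi_\Diamond$.

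For (i), I would observe that the definition $\Theta_\Diamond(t) = \widetilde{\Phi_\Diamond}^{-1}(\Phi_\Diamond(t))$ is literally the statement $\widetilde{\Phi_\Diamond}(\Theta_\Diamond(t)) = \Phi_\Diamond(t)$, so substituting $t = \Theta_\Diamond^{-1}(s)$ gives $\widetilde{\Phi_\Diamond}(s) = \Phi_\Diamond\bigl(\Theta_\Diamond^{-1}(s)\bigr)$. Then (ii) follows by composing (i) with $\Theta$ and using that $\widetilde{\Phi_\Diamond}\circ\Theta = \Phi$ by the definition of $\Theta$. For (iii), setting $s = \Psi_\Diamond^{-1}(t)$ yields $\Phi_\Diamond(s)/s = t$, so $\Phi_\Diamond(s) = ts = t\,\Psi_\Diamond^{-1}(t)$, and applying $\Phi_\Diamond^{-1}$ gives the identity at once.

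Part (iv) then comes from substituting the same $s = \Psi_\Diamond^{-1}(t)$ into $\Theta_\Diamond$: by (iii), $\Phi_\Diamond(s) = ts$, so
\[
\Theta_\Diamond(s) = \widetilde{\Phi_\Diamond}^{-1}(\Phi_\Diamond(s)) \leq 2\,\tfrac{\Phi_\Diamond(s)}{s} = 2t,
\]
where the inequality is exactly \eqref{AAtilde'} with $A = \Phi_\Diamond$.

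The only part requiring any real care is (v), and this I would treat as follows. Apply \eqref{AAtilde'} to $A = \widetilde{\Phi_\Diamond}$, whose Young conjugate is $\Phi_\Diamond$:
\[
\frac{\widetilde{\Phi_\Diamond}(t)}{t} \;\leq\; \Phi_\Diamond^{-1}\bigl(\widetilde{\Phi_\Diamond}(t)\bigr) \;\leq\; 2\,\frac{\widetilde{\Phi_\Diamond}(t)}{t}.
\]
Set $r = \widetilde{\Phi_\Diamond}(t)$ and $s = \Phi_\Diamond^{-1}(r)$, so that $\Phi_\Diamond(s) = r$ and $s \in [r/t, 2r/t]$. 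Then $\Psi_\Diamond(s) = r/s \in [t/2, t]$. Since $\Phi_\Diamond$ is a $1$-dimensional $N$-function, the ratio $\Psi_\Diamond(t)=\Phi_\Diamond(t)/t$ is nondecreasing (a standard consequence of convexity and $\Phi_\Diamond(0)=0$), and strictly so where needed for the inverse to be taken unambiguously; hence $\Psi_\Diamond^{-1}(t/2)\le s\le \Psi_\Diamond^{-1}(t)$. Applying the (also monotone) function $\Phi_\Diamond$ gives
\[
\Phi_\Diamond\bigl(\Psi_\Diamond^{-1}(t/2)\bigr) \;\leq\; \Phi_\Diamond(s) \;=\; \widetilde{\Phi_\Diamond}(t) \;\leq\; \Phi_\Diamond\bigl(\Psi_\Diamond^{-1}(t)\bigr),
\]
which is precisely (v). The main obstacle is purely bookkeeping: making sure the generalized inverses of $\Phi_\Diamond$, $\widetilde{\Phi_\Diamond}$ and $\Psi_\Diamond$ behave as expected, which is guaranteed by $\Phi_\Diamond$ being a $1$-dimensional $N$-function.
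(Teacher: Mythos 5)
Your proposal is correct and follows essentially the same route as the paper: parts (i)--(iii) are the same direct unwinding of the definitions, and (iv) is exactly the paper's computation, namely \eqref{AAtilde'} (equivalently \eqref{AAtilde}) applied to $\Phi_\Diamond$ at the point $\Psi_\Diamond^{-1}(t)$ together with (iii). For (v) the paper simply invokes \eqref{AAtilde'} in one line, while you spell it out by applying it to $\widetilde{\Phi_\Diamond}$ and transferring back through $\Psi_\Diamond$; this is the same idea in dual form, and the monotonicity bookkeeping you flag is indeed harmless, since for a $1$-dimensional $N$-function the ratio $\Phi_\Diamond(t)/t$ is strictly increasing (an affine-through-the-origin stretch would contradict $\lim_{t\to 0}\Phi_\Diamond(t)/t=0$).
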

\begin{proof} Equations {\it (i)} and {\it (ii)} are straightforward consequences of definitions \eqref{andrea3} and \eqref{andrea4}.
Equation {\it (iii)} easily follows on replacing $t$ by $\Psi_\Diamond ^{-1}(t)$ in the definition  of $\Psi_\Diamond$. As for  inequality {\it {(iv)}}, recall that, since $\Phi_\Diamond$ is a Young function, then, by \eqref{AAtilde},
\[ t \leq \Phi_\Diamond^{-1}(t) \widetilde
{\Phi_\Diamond}^{-1}(t)\leq 2t \quad \hbox{for $t\geq 0$.}
\]
By{\it (iii)} and the second inequality above we get
\begin{align}
\label{andrea6} \Theta_\Diamond \big(\Psi_\Diamond^{-1}(t)\big) & =
\widetilde
{\Phi_\Diamond}^{-1}\big(\Phi_\Diamond\big(\Psi_\Diamond^{-1}(t)\big)\big)=
\widetilde
{\Phi_\Diamond}^{-1}\big(\Phi_\Diamond\big(\Phi_\Diamond^{-1}\big(t\Psi_\Diamond
^{-1} (t)\big)\big)\big)=  \widetilde
{\Phi_\Diamond}^{-1}\big(t\Psi_\Diamond ^{-1} (t)\big)\\ \nonumber &
\leq \frac{2t\Psi_\Diamond ^{-1}
(t)}{\Phi_\Diamond^{-1}(t\Psi_\Diamond ^{-1} (t))}=
\frac{2t\Psi_\Diamond ^{-1} (t)}{\Psi_\Diamond ^{-1} (t)} = 2t \quad
\hbox{for $t\geq 0$.}
\end{align}
\\ Finally, property {\it (v)} follows via equation \eqref{AAtilde'} applied with $A$ replaced by $\Psi_\Diamond$.
 \end{proof}

\subsection{Sobolev embeddings}

The sharp  embeddings for anisotropic Orlicz--Sobolev spaces collected in~this subsection
are pivotal in our analysis.
\\ Let $\Phi$ be an $n$-dimensional  Young function. A basic anisotropic Poincar\'e-type inequality
 tells us that there exists a
constant $\kappa_1=\kappa_1(n)$ such that
\begin{equation}\label{anisopoinc}
\int_\om \Phi_\circ (\kappa_1
|\om|^{-\frac 1n} |u|)\, dx \leq \int _\om \Phi( \nabla u)\, dx\,,
\end{equation}
for every $ u
\in W_0^{1}\mathcal L^\Phi (\Omega )$, and
\begin{equation}\label{anisopoincnorm}
\|u\|_{L^{\Phi_\circ}(\om)} \leq \kappa_1^{-1}|\om|^{\frac 1n}\|\nabla
u\|_{L^\Phi(\om)}
\end{equation}
for every $ u
\in W_0^{1}L^\Phi (\Omega )$ -- see \cite[Proposition 3.2]{barlettacianchi}.
\\ The statement of optimal anisotropic Sobolev inequalities requires some further definitions.  Assume that
\begin{equation}\label{conv0}
\int _0\bigg(\frac t{\Phi _\circ (t)}\bigg)^{\frac 1{n-1}}\, dt <
\infty\,.
\end{equation}
If
\begin{equation}
\label{intdiv}
\int^\infty\left(\frac{t}{\Phi_\circ(t)}\right)^\frac{1}{n-1}\,dt
=\infty\,,
\end{equation}
then we denote by $\Phi _n : [0, \infty ) \to [0, \infty]$ the
Sobolev conjugate of $\Phi$ introduced in \cite{Cfully}. Namely, $\Phi_n$ is the Young function   defined as
\begin{equation}\label{sobconj}
\Phi_n (t)= \Phi _\circ (H^{-1}(t)) \quad \hbox{for $t \geq 0$,}
\end{equation}
where
 $H : [0, \infty ) \to [0, \infty)$ is given by
\begin{equation}\label{H1}
    H(t)= \bigg(\int _0^t \bigg(\frac \tau{\Phi _\circ (\tau)}\bigg)^{\frac 1{n-1}}\, d\tau\bigg)^{\frac {n-1}{n}} \quad \hbox{for $t \geq 0$.}
\end{equation}
Here, $H^{-1}$ denotes the
generalized left-continuous inverse of~$H$.
\\
By \cite[Theorem 1 and Remark 1]{Cfully}, there exists a constant $\kappa_2=\kappa_2 (n)$ such that
\begin{equation}\label{B-W}
\int_{\Omega}\Phi _n\left(\frac{ |u|}{\kappa_2\,
(\int_{\Omega}\Phi(\nabla u)dy)^{\frac 1{n}}}\right)dx \leq
\int_{\Omega}\Phi(\nabla u)\, dx
\end{equation}
for every $u \in W_0^{1}\mathcal L^\Phi (\Omega )$, and
\begin{equation}\label{B-Wbis}
 \|u \|_{L^{\Phi _n}(\Omega )} \leq \kappa_2 \|\nabla u \|_{L^\Phi (\Omega )}
\end{equation}
for every $u \in W_0^{1}L^\Phi (\Omega )$.  Moreover, $L^{\Phi _n}(\Omega )$ is the optimal, i.e. the smallest possible, Orlicz space which renders \eqref{B-Wbis} true for all $n$-dimensional  Young functions $\Phi$ with prescribed $\Phi _\circ$.
\\
This result can be still improved if embeddings of $W_0^{1}L^\Phi (\Omega )$ into the broader class of rearrangement-invariant target spaces are considered. Indeed,
denote by $\phi_\circ : [0, \infty) \to [0, \infty)$ the
non-decreasing, left-continuous function such that
$$
\Phi_\circ (t) = \int_0^t\phi_\circ(\tau)\, d\tau \quad \hbox{for $t\geq 0$,}
$$
and let $\widehat {\Phi_\circ}$ be the Young function given by
\begin{equation}
\label{Bphi} \widehat {\Phi_\circ} (t)= \int_0^t\widehat
{\phi_\circ}(\tau)\, d\tau \quad \hbox{for $t\geq 0$,}
\end{equation}
where $\widehat {\phi_\circ} :[0, \infty) \to [0, \infty)$ is the
non-decreasing, left-continuous function defined via
\begin{equation}\label{2,-2}
( \widehat {\phi_\circ})^{-1}(t) = \bigg(\int
_{\phi_\circ^{-1}(t)}^{\infty}\bigg(\int
_0^r\bigg(\frac{1}{\phi_\circ(t)}\bigg)^{\frac{1}{n-1}}
 dt\bigg)^{-n}\frac{dr}{\phi_\circ(r)^{\frac{n}{n-1}}}\bigg)^{\frac{1}{1-n}}\,\,\,\quad{\rm for}\,\,\,
 t\geq 0\,,
\end{equation}
and $\phi_\circ^{-1}$ and $\widehat {\phi_\circ}^{-1}$ are the
(generalized) left-continuous inverses of $\phi_\circ$ and $\widehat
{\phi_\circ}$, respectively. 
\\
Let $L(\widehat {\Phi_\circ}, -n)(\om)$ be the Orlicz-Lorentz type space defined as in \eqref{optnorm}. By \cite{cianchi_ibero}, there exists a constant $\kappa_3=\kappa_3 (n)$ such that
\begin{equation}
\label{optint} \int_0^{|\om|}\widehat
{\Phi_\circ}\big(\kappa_3^{-1} s^{-\frac{1}{n}}u^*(s)\big)\, ds \leq
\int_\om \Phi (\nabla u)\, dx
\end{equation}
for every
$ u
\in W_0^{1}\mathcal L^\Phi (\Omega )$,
and
\begin{equation}
\label{optemb} \|u\|_{L(\widehat {\Phi_\circ}, -n)(\om)} \leq
\kappa_3 \|\nabla u\|_{L^\Phi(\om)}
\end{equation}
for every $ u
\in W_0^{1}L^\Phi (\Omega )$.
Moreover, $L(\widehat
{\Phi_\circ}, -n)(\om)$ is the optimal, i.e. the smallest possible,
rearrangement-invariant space which renders inequality
\eqref{optnorm} true for all $n$-dimensional  Young functions $\Phi$ with prescribed
$\Phi_\circ$.
\\
{ Let us notice that the Orlicz-Lorentz-type space $L[\widetilde {\Phi_\circ},n](\om)$,
defined as in \eqref{dualoptnorm}, is the associate space of
$L(\widehat {\Phi_\circ}, -n)(\om)$ (up to equivalent norms).
Moreover, as shown in \cite[Inequality (4.46)]{cianchi_ibero},
\begin{equation}
\label{holderint} \int_\om |uv|\, dx \leq \int _0^{|\om|} u^*(s)
v^*(s)\, ds \leq C \bigg(\int_0^{|\om|}\widehat
{\Phi_\circ}\big(s^{-\frac{1}{n}}u^*(s)\big)\, ds +
\int_0^{|\om|}\widetilde
{\Phi_\circ}\big(s^{\frac{1}{n}}v^{**}(s)\big)\, ds\bigg)
\end{equation}
for some constant $C=C(n)$, and for every  $u, v\in \mathcal M(\om)$.}
\\ When $\Phi_\circ$ grows so fast near infinity  that condition \eqref{intdiv} fails, namely
\begin{equation}\label{intconv}
\int^\infty\left(\frac{t}{\Phi_\circ
(t)}\right)^\frac{1}{n-1}\,dt<\infty\,,
\end{equation}
then   there exists a constant $\kappa_4=\kappa_4(\Phi, n, |\om|)$ such that
\begin{equation}\label{B-Wter}
 \|u \|_{L^{\infty}(\Omega )} \leq \kappa_4 \|\nabla u \|_{L^\Phi (\Omega )}
\end{equation}
for every $ u
\in W_0^{1}L^\Phi (\Omega )$.

\subsection{Modular approximation}\label{subapprox}

One  obstacle to be faced when dealing with Orlicz and Orlicz-Sobolev spaces built upon Young functions that do not 
satisfy the $\Delta_2$-condition is the lack of separability of~these spaces. In particular, functions in these spaces cannot be approximated in norm by smooth functions. Substitutes for this property are certain approximation results in integral form, usually referred to~as~\lq\lq{}modular approximability\rq\rq{} in the theory of Orlicz spaces,  which are well fitted for~applications to~partial differential equations. This kind of approximation is well known  for  isotropic Orlicz and  Orlicz--Sobolev spaces, and goes back to~\cite{Gossez}. On the other hand, a counterpart in the more general anisotropic framework seems not to be completely settled yet. In this subsection, we recall a~few definitions 
and state the approximation properties that are needed in view of our main results. Their proofs present some additional difficulty comparing to the isotropic case, and are given in Section~\ref{approxproof}.

\smallskip
\par Let $\Phi$ be an $n$-dimensional  Young function and let $\om$ be a measurable set in $\rn$ with $|\om|<\infty$. A~sequence $\{U_k\}\subset L^\Phi(\om{;} \rn)$ is said to converge
modularly to $U$ in $L^\Phi(\Omega{;} \rn)$ if there exists
$\lambda>0$ such that
\begin{equation}
\label{july41} \lim _{k\to
\infty}\int_{\Omega}\Phi\left(\frac{U_k-U}{\lambda}\right)\, dx= 0.
\end{equation}
Note  that if $U_k \to U$ {modularly}, then $ U_k\to U$ in measure.

The following proposition links modular convergence to a kind of weak convergence against test functions in the associate space.

\begin{proposition}\label{prop:conv:mod-weak}  Let $\Phi$ be an $n$-dimensional  $N$-function and let $\om$ be a measurable set in $\rn$ with $|\om|<\infty$. Let $U \in \mathcal M(\om{;} \rn)$. Assume that    the sequence $\{U_k\} \subset \mathcal M(\om{;}\rn)$ and
that $U_k \to U$ modularly in $L^\Phi(\om{;} \rn)$.  Then there exists a subsequence of $\{U_k\}$, still indexed by $k$, such that
\begin{equation}
\label{july37} \lim _{k \to \infty}\int_\Omega U_k \cdot V\,dx
=  \int_\Omega  U \cdot V \,dx\qquad \text{for every }\quad V \in L^{\widetilde  \Phi}(\om{;}
\rn).
\end{equation}
\end{proposition}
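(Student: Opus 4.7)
The plan is to extract a subsequence along which $U_k \to U$ almost everywhere, to verify that $U$ itself belongs to $L^\Phi(\om;\rn)$, and then to apply Vitali's convergence theorem to the products $(U_k - U)\cdot V$, after rescaling $V$ to a size compatible with the modular parameter $\lambda$ dictated by the hypothesis.

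First I would deduce a.e.\ convergence from convergence in measure. Since $\Phi$ is an $n$-dimensional $N$-function, every sublevel set $\{\Phi \leq t\}$ is bounded in $\rn$, so for each $\varepsilon>0$ the quantity $m(\varepsilon) := \inf\{\Phi(\xi/\lambda): |\xi| \geq \varepsilon\}$ is strictly positive. Markov's inequality then gives
\[ |\{x \in \om : |U_k(x) - U(x)| > \varepsilon\}| \leq \frac{1}{m(\varepsilon)}\int_\om \Phi\!\left(\frac{U_k - U}{\lambda}\right) dx \longrightarrow 0, \]
so $U_k \to U$ in measure, and along a subsequence still indexed by $k$ also almost everywhere in $\om$. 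For $k$ large we have $\int_\om \Phi((U_k - U)/\lambda)\,dx \leq 1$, hence $\|U_k - U\|_{L^\Phi(\om;\rn)} \leq \lambda$, and the triangle inequality for $\|\cdot\|_{L^\Phi(\om;\rn)}$, together with $U_{k_0} \in L^\Phi(\om;\rn)$ for some such $k_0$, yields $U \in L^\Phi(\om;\rn)$.

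Next I would fix $V \in L^{\widetilde\Phi}(\om;\rn)$ with $V \not\equiv 0$ and set $\alpha := \lambda\,\|V\|_{L^{\widetilde\Phi}(\om;\rn)}$, so that $\int_\om \widetilde\Phi(\lambda V/\alpha)\,dx \leq 1$ by the very definition of the Luxemburg norm. Using the evenness of $\Phi$, the pointwise Young inequality $\xi\cdot\eta \leq \Phi(\xi) + \widetilde\Phi(\eta)$ yields
\[ \left|(U_k - U)\cdot\tfrac{V}{\alpha}\right| \leq \Phi\!\left(\tfrac{U_k - U}{\lambda}\right) + \widetilde\Phi\!\left(\tfrac{\lambda V}{\alpha}\right) \qquad \text{a.e.\ in } \om. \]
The first summand on the right is a sequence of nonnegative $L^1$ functions whose integrals over $\om$ tend to $0$, and is therefore uniformly absolutely continuous with respect to the Lebesgue measure of the integration set; the second is a single $L^1$ function. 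Consequently $\{(U_k - U)\cdot V/\alpha\}$ is uniformly integrable in the Vitali sense, and since it tends to $0$ almost everywhere, Vitali's convergence theorem delivers $\int_\om (U_k - U)\cdot V/\alpha\,dx \to 0$. Multiplying by $\alpha$ and adding $\int_\om U\cdot V\,dx$ produces \eqref{july37}. The subsequence extracted in the first step depends only on $\{U_k\}$, not on $V$, so it serves simultaneously for every test field.

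The hard part will be the rescaling just described. The modular scale $\lambda$ is fixed by the sequence $\{U_k\}$ and cannot be adjusted, while the Luxemburg scale of $V$ is controlled by $V$ alone; naively applying Young's inequality at either scale leaves behind a term such as $\widetilde\Phi(\lambda V)$ that need not be integrable for a general $V \in L^{\widetilde\Phi}(\om;\rn)$. The rescaling $V \mapsto V/\alpha$ harmonises the two scales and avoids any appeal to density of $E^{\widetilde\Phi}(\om;\rn)$ in $L^{\widetilde\Phi}(\om;\rn)$, which anyway fails in the absence of the $\Delta_2$-condition on $\widetilde\Phi$.
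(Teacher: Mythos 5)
Your proposal is correct and follows essentially the same route as the paper: a subsequence along which $U_k\to U$ a.e., then Young's inequality applied to $(U_k-U)/\lambda$ against a suitably rescaled $V$, and a passage to the limit in the integral. The only (immaterial) difference is the final convergence device: the paper extracts the subsequence so that $\Phi\big((U_k-U)/\lambda\big)$ is dominated by a fixed $L^1$ function and invokes dominated convergence, whereas you deduce uniform integrability from $\int_\Omega\Phi\big((U_k-U)/\lambda\big)\,dx\to 0$ and invoke Vitali's theorem.
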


The next result concerns the modular density  of simple functions  in anisotropic Orlicz spaces.

\begin{proposition}\label{prop:modulardensity}
 Let $\Phi$ be an $n$-dimensional  $N$-function and let $\om$ be a measurable set in $\rn$ with $|\om|<\infty$. Assume that  $U\in L^\Phi(\Omega;\rn)$. Then there exists  a sequence of simple
functions $\{U_k\}$ such that $U_k \to U$ modularly in $L^\Phi(\om , \rn)$.
\end{proposition}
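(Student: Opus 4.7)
The plan is to argue in three stages: truncate $U$ to obtain bounded functions, uniformly approximate each truncation by a vector-valued simple function, and glue the two steps together via convexity of $\Phi$ at a single, fixed value of the parameter $\lambda$.

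Since $U \in L^\Phi(\Omega;\rn)$, by the very definition of the Luxemburg norm there exists $\lambda_0>0$ such that
$$\int_\Omega \Phi\!\left(\tfrac{U}{\lambda_0}\right)\,dx < \infty.$$
For each $m \in \N$ set $U^{(m)} = U\,\chi_{\{|U| \leq m\}}$, which is bounded with $|U^{(m)}|\le m$. Since $\Phi$ is even with $\Phi(0)=0$,
$$\int_\Omega \Phi\!\left(\tfrac{U^{(m)}-U}{\lambda_0}\right)\,dx = \int_{\{|U|>m\}} \Phi\!\left(\tfrac{U}{\lambda_0}\right)\,dx,$$
and this quantity tends to zero as $m\to\infty$ by absolute continuity of the integral, since $\Phi(U/\lambda_0)\in L^1(\Omega)$ and $|\{|U|>m\}|\to 0$.

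Next, for each fixed $m$, the bounded measurable function $U^{(m)}$ can be uniformly approximated by vector-valued simple functions: partition the ball $\{|\xi|\le m\}\subset \rn$ into finitely many Borel pieces of diameter at most $1/j$, select a representative in each, and pull back to obtain a simple function $V_{m,j}$ with $\|V_{m,j}-U^{(m)}\|_{L^\infty(\Omega;\rn)}\le 1/j$. Because $\Phi$ is finite-valued and convex (hence continuous on $\rn$) with $\Phi(0)=0$,
$$\int_\Omega \Phi\!\left(\tfrac{V_{m,j}-U^{(m)}}{\lambda_0}\right)\,dx \le |\Omega|\,\sup_{|\eta|\le 1/(j\lambda_0)}\Phi(\eta)\ \longrightarrow\ 0\quad\text{as }j\to\infty.$$

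Finally, by convexity of $\Phi$ applied to $\tfrac{1}{2}\cdot 2(V_{m,j}-U^{(m)})/\lambda_0+\tfrac12\cdot 2(U^{(m)}-U)/\lambda_0$,
$$\int_\Omega \Phi\!\left(\tfrac{V_{m,j}-U}{2\lambda_0}\right)dx \le \tfrac12\int_\Omega \Phi\!\left(\tfrac{V_{m,j}-U^{(m)}}{\lambda_0}\right)dx + \tfrac12\int_\Omega \Phi\!\left(\tfrac{U^{(m)}-U}{\lambda_0}\right)dx.$$
A diagonal selection of indices $m_k,j_k$ making both terms on the right smaller than $1/k$ yields $U_k := V_{m_k,j_k}$ with $\int_\Omega \Phi\bigl((U_k-U)/(2\lambda_0)\bigr)\,dx \to 0$, that is, modular convergence with $\lambda = 2\lambda_0$. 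No genuine obstacle appears here; the only point worth emphasizing is that evenness and convexity of $\Phi$ together with $|\Omega|<\infty$ are precisely what allow us to collapse the two limiting procedures (truncation and uniform approximation) into a single modular estimate at one fixed $\lambda$, bypassing the failure of norm-density that would otherwise occur in the absence of the $\Delta_2$-condition.
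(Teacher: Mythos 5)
Your proof is correct, and its skeleton is the same as the paper's: truncate $U$, approximate the bounded truncation by simple functions, and glue the two limits through one convexity splitting at a fixed parameter, finishing with a diagonal selection. Where you genuinely diverge is the middle step. The paper partitions the target cube $[-\ell,\ell]^n$ into dyadic cells of diameter $\tfrac1k$, assigns to each cell the point minimizing $\Phi$ on its closure, and works with an everywhere-defined representative of the truncation; this choice forces the pointwise domination $\Phi(U_{\ell,k}/\lambda)\leq\Phi(U_\ell/\lambda)$, after which the modular of $U_{\ell,k}-U_\ell$ is controlled by a convexity splitting and the dominated convergence theorem. You instead extract a uniform approximation, $\|V_{m,j}-U^{(m)}\|_{L^\infty(\Omega;\rn)}\leq \tfrac1j$, and kill the modular outright via $|\Omega|\,\sup_{|\eta|\leq 1/(j\lambda_0)}\Phi(\eta)\to 0$, which needs only that $\Phi$ is continuous at the origin with $\Phi(0)=0$ (automatic for an $n$-dimensional $N$-function, and indeed for any $n$-dimensional Young function, which is finite near $0$) together with $|\Omega|<\infty$. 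Your route is more elementary: it dispenses with the argmin device, the careful choice of representative, and the dominated convergence argument, at the modest price of not producing approximants whose modular is pointwise dominated by that of the truncation -- a side property of the paper's construction that is not needed for the statement. Both arguments correctly keep a single $\lambda$ throughout (your $2\lambda_0$ versus the paper's $4\lambda$), which is the essential point in the absence of the $\Delta_2$-condition.
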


We conclude with a modular smooth approximation property in anisotropic Orlicz-Sobolev spaces on bounded Lipschitz domains. Recall that an open set  $\Omega$ is called a \textit{Lipschitz domain} if each point of~$\partial \Omega$ has a~neighborhood
$\mathcal U $ such that $\Omega \cap \mathcal U$ is  the
subgraph of a Lipschitz continuous function of $n-1$ variables.

\begin{proposition}\label{prop:approx} Let $\Phi$ be an $n$-dimensional  $N$-function and  let  $\Omega$ be  a bounded Lipschitz domain in $\rn$.  Assume that  $u\in W_0^1L^\Phi(\Omega)\cap L^\infty(\Omega)$. Then there exists  a constant $C=C(\om)$ and a sequence $\{u_k\} \subset C_0^\infty(\Omega)$ such that
\begin{equation}\label{sep30}
u_k \to  u \quad \hbox{a.e. in $\om$,}
\end{equation}
\begin{equation}\label{july36}
\|u_k\|_{L^\infty(\Omega)}\leq C\|u\|_{L^\infty(\Omega)} \quad \hbox{for every $k\in\N$,}
\end{equation}
\begin{equation}\label{angela1grad} \nabla u_k \to
\nabla u \quad \hbox{modularly in $L^\Phi(\Omega;\rn)$.}
\end{equation}
\end{proposition}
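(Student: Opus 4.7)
The plan is to combine a partition-of-unity localisation with translation-plus-convolution, following the strategy of Gossez adapted to the anisotropic, $n$-dimensional Young function setting. Since $\Omega$ is a bounded Lipschitz domain, I choose a finite open cover $\{U_j\}_{j=0}^N$ of $\overline\Omega$ with $\overline{U_0}\subset\Omega$ and, for each $j\geq 1$, $\Omega\cap U_j$ expressible as a subgraph of a Lipschitz function in suitable coordinates. Standard arguments then produce unit vectors $e_1,\dots,e_N$ and $\tau_0>0$ such that $\overline{U_j\cap\Omega}+t e_j\subset\Omega$ for $0<t\leq\tau_0$. Fix a subordinate smooth partition of unity $\{\zeta_j\}$ with $\sum_j\zeta_j=1$ on a neighbourhood of $\overline\Omega$ and $\supp\zeta_j\subset U_j$, and set $v_j=u\zeta_j$. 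Each $v_j$ belongs to $W_0^1L^\Phi(\Omega)\cap L^\infty(\Omega)$, with $\nabla v_j=\zeta_j\nabla u+u\nabla\zeta_j\in L^\Phi(\Omega;\rn)$.

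For the interior piece I take the mollification $v_0*\rho_\varepsilon$ (extending $v_0$ by zero to $\rn$) with a standard nonnegative mollifier $\rho_\varepsilon$. For each boundary piece, in order to keep the support strictly inside $\Omega$ after convolution, I first translate, defining
\[
v_j^{(\varepsilon)}(x)=\int_{\rn} v_j(x+c\varepsilon e_j-y)\,\rho_\varepsilon(y)\,dy
\]
with $c>1$ sufficiently large. Setting $u_k=\sum_{j=0}^N v_j^{(\varepsilon_k)}$ for a sequence $\varepsilon_k\downarrow 0$ gives $u_k\in C_0^\infty(\Omega)$; the pointwise bound $|u_k|\leq C\|u\|_{L^\infty(\Omega)}$ with $C$ depending only on the partition of unity, together with a.e.\ convergence $u_k\to u$, follows from the standard Lebesgue differentiation argument and yields \eqref{sep30} and \eqref{july36}.

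The critical assertion is the modular convergence \eqref{angela1grad} of $\nabla u_k$. Writing $\nabla v_j^{(\varepsilon)}=W_j^{(\varepsilon)}*\rho_\varepsilon$ where $W_j^{(\varepsilon)}(x)=(\zeta_j\nabla u+u\nabla\zeta_j)(x+c\varepsilon e_j)$, the triangle inequality combined with the convexity identity
\[
\Phi\Big(\tfrac{A-B}{3\lambda}\Big)\leq \tfrac{1}{3}\Big[\Phi\Big(\tfrac{A-A'}{\lambda}\Big)+\Phi\Big(\tfrac{A'-B'}{\lambda}\Big)+\Phi\Big(\tfrac{B'-B}{\lambda}\Big)\Big]
\]
reduces the task to two modular continuity statements on $L^\Phi(\rn;\rn)$: translations $V\mapsto V(\cdot+h)$ converge modularly to $V$ as $h\to 0$, and mollifications $V*\rho_\varepsilon$ converge modularly to $V$ as $\varepsilon\to 0$. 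Both are proved by approximating $V$ modularly by a compactly supported simple function $V'$ through Proposition~\ref{prop:modulardensity}, observing that translation and mollification of $V'$ converge uniformly (hence modularly) to $V'$, and controlling the error $V-V'$ in the translated or mollified version by Jensen's inequality applied to the probability measure $\rho_\varepsilon(y)\,dy$, which yields
\[
\int_\Omega\Phi\Big(\tfrac{(V-V')*\rho_\varepsilon}{\lambda}\Big)dx\leq \int_{\rn}\Phi\Big(\tfrac{V-V'}{\lambda}\Big)dx.
\]

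The main obstacle is the genuine lack of norm density of smooth functions in $L^\Phi$ when $\Phi$ fails the $\Delta_2$-condition: one cannot directly invoke norm continuity of translation or convolution. This forces the entire argument to be run with a \emph{single} common parameter $\lambda$ propagated through all modular estimates, and a careful interplay between Jensen's inequality for the anisotropic $n$-dimensional Young function $\Phi$ and the modular density of simple functions supplied by Proposition~\ref{prop:modulardensity}. The Lipschitz regularity of $\partial\Omega$ is essential to perform the finite-chart translation step uniformly across all boundary pieces, and the $L^\infty$ bound on $u$ is what enables one to multiply by $\nabla\zeta_j$ and still remain in $L^\Phi$ with the required modular control.
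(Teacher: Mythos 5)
Your argument is essentially correct, but it follows a genuinely different geometric route from the paper. You decompose $\Omega$ by Lipschitz charts and push each boundary piece into $\Omega$ by a translation $x\mapsto x+c\ep e_j$ in a direction transversal to the local graph before mollifying (the Gossez/segment-condition strategy), whereas the paper writes $\Omega$ as a finite union of domains starshaped with respect to balls and, on each piece, composes a \emph{dilation} $u(\cdot/\gamma_k)$ with mollification at scale $1/k$. The analytic core is the same in both: a single fixed $\lambda$ carried through all modular estimates, a three-term convexity splitting against a simple function supplied by Proposition~\ref{prop:modulardensity}, Jensen's inequality for the convolution, and the $L^\infty$ bound on $u$ to keep $u\nabla\zeta_j$ in $L^\Phi(\Omega;\rn)$ after the partition of unity. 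What each approach buys: your translation preserves the modular exactly (translation invariance of Lebesgue measure), so the analogue of the paper's key inequality \eqref{unifMxid} is immediate, at the price of the chart-by-chart choice of directions $e_j$ and the constant $c$ tied to the Lipschitz constants; the paper's dilation gives \eqref{unifMxid} with the harmless factor $\gamma_k^n\leq 1$ after a change of variables and rests on the cleaner geometric fact that bounded Lipschitz domains are finite unions of star-shaped pieces. One small slip to fix: a simple function is not continuous, so its translates and mollifications do \emph{not} converge uniformly; but they converge in measure, and since the simple function is bounded the integrands $\Phi\bigl((V'(\cdot+h)-V')/\lambda\bigr)$ are uniformly bounded, so dominated convergence gives the needed modular convergence -- this is exactly how the paper treats the corresponding term, and it repairs your middle term without affecting the rest of the proof.
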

\begin{remark}\label{approx-iso}
{\rm
In the isotropic case, namely  when   $\Phi(\xi) = A(|\xi|)$ for $\xi \in \rn$, for some
$N$-function $A$, properties \eqref{sep30} and~\eqref{angela1grad} in Proposition~\ref{prop:approx}   are known to hold even if the assumption $u \in L^\infty(\Omega)$ is dropped -- see  \cite[Theorem~4]{Gossez}. }
\end{remark}

\subsection{Some classical theorems of functional analysis}

We conclude this section  by recalling a~few well--known results of functional analysis, formulated in the anisotropic Orlicz space framework.   In~their  statements, $\om$  is assumed to be a measurable set in $\rn$ with $|\om|<\infty$.

\begin{theorem}{\rm {\bf [Vitali]}} \label{theo:VitConv}
Assume that the sequence
$\{U_k\} \subset \mathcal M(\om; \rn)$ is uniformly integrable in $\om$, and there exists a
function $U: \om \to \rn$ such that $\lim_{k   \to \infty} U_k =U$
 a.e. in $\om$ \color{black} and  $|U|<\infty$ a.e. in
$\om$. Then $U \in L^1(\om{;} \rn)$ and $\lim _{k\to \infty}U_k = U$
in  $L^1(\om; \rn)$.
\end{theorem}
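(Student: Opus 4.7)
The plan is to execute the classical Vitali argument in the vector-valued setting, which reduces immediately to the scalar case by considering $|U_k-U|$ in $\R^m$-norm. Throughout I read \emph{uniformly integrable} in the standard sense that $\lim_{M\to\infty}\sup_k\int_{\{|U_k|>M\}}|U_k|\,dx=0$, which, combined with $|\om|<\infty$, yields both equi-boundedness in $L^1(\om;\rn)$ and equi-absolute continuity of the integrals: for every $\ep>0$ there is $\delta=\delta(\ep)>0$ such that $\sup_k\int_E|U_k|\,dx<\ep$ whenever $|E|<\delta$.

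First I would verify that $U\in L^1(\om;\rn)$. By a.e.\ convergence and Fatou's lemma,
\begin{equation*}
\int_\om|U|\,dx=\int_\om\liminf_{k\to\infty}|U_k|\,dx\leq\liminf_{k\to\infty}\int_\om|U_k|\,dx<\infty,
\end{equation*}
where the last inequality uses the equi-boundedness of $\{|U_k|\}$ in $L^1(\om)$ deduced from uniform integrability via the splitting $\int_\om|U_k|\,dx\leq M|\om|+\sup_k\int_{\{|U_k|>M\}}|U_k|\,dx$ for $M$ large. In particular, the integral of $|U|$ is absolutely continuous: up to decreasing $\delta$, one may also assume $\int_E|U|\,dx<\ep$ whenever $|E|<\delta$.

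Next I would invoke Egorov's theorem, which applies since $|\om|<\infty$ and $U_k\to U$ a.e.\ with $|U|<\infty$ a.e.: there exists a measurable set $F\subset\om$ with $|\om\setminus F|<\delta$ on which $U_k\to U$ uniformly. Then the decomposition
\begin{equation*}
\int_\om|U_k-U|\,dx=\int_F|U_k-U|\,dx+\int_{\om\setminus F}|U_k-U|\,dx
\end{equation*}
handles each piece separately: the first summand is bounded by $|\om|\sup_{x\in F}|U_k(x)-U(x)|\to 0$ as $k\to\infty$, while the second is at most $\int_{\om\setminus F}|U_k|\,dx+\int_{\om\setminus F}|U|\,dx<2\ep$ by the choice of $\delta$. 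Letting $k\to\infty$ and then $\ep\to 0^+$ gives the conclusion.

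There is essentially no serious obstacle here; the only point deserving care is recording at the outset that the assumed uniform integrability provides simultaneously the $L^1$-boundedness needed to apply Fatou in Step~1 and the equi-absolute continuity of integrals exploited in Step~3, so that the small-measure set $\om\setminus F$ delivered by Egorov can be used to absorb both $\int_{\om\setminus F}|U_k|\,dx$ and $\int_{\om\setminus F}|U|\,dx$ at once.
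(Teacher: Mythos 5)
Your argument is correct: it is the standard Vitali proof (uniform integrability gives $L^1$-boundedness and equi-absolute continuity, Fatou gives $U\in L^1(\om;\rn)$, and Egorov plus the splitting of $\om$ into a set of uniform convergence and a small remainder gives the $L^1$-convergence), and the reduction of the vector-valued case to scalar estimates on $|U_k-U|$ is immediate. The paper itself states this theorem without proof, as a recalled classical result, so there is nothing to compare against; your write-up supplies exactly the canonical argument and is complete.
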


\begin{theorem}{\rm {\bf  [Dunford-Pettis]}}\label{theo:dunf-pet}
A family $\{U_\sigma\}_{\sigma \in \Sigma}$ of functions in $\mathcal M(\om; \rn)$
 is uniformly integrable in $L^1(\Omega;
\rn)$ if and only if it is relatively compact in the weak
topology.
\end{theorem}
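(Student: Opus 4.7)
The Dunford--Pettis theorem is a classical result, and the plan is to establish the two implications by well--known functional analytic arguments adapted to the vector--valued setting (which reduces to the scalar case by applying the characterization componentwise, since uniform integrability and weak relative compactness in $L^1(\om;\rn)$ are equivalent to the same properties for each of the $n$ scalar components).

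For the forward implication, assume $\{U_\sigma\}_{\sigma \in \Sigma}$ is uniformly integrable. By the de la Vall\'ee--Poussin criterion there exists an $N$-function $A:[0,\infty)\to [0,\infty)$ with $A(t)/t\to\infty$ as $t\to\infty$ such that $\sup_\sigma \int_\om A(|U_\sigma|)\,dx <\infty$; hence $\{U_\sigma\}$ is bounded in the Orlicz space $L^A(\om;\rn)$. Invoking Eberlein--\v{S}mulian it suffices to extract a weakly convergent subsequence from any countable subfamily $\{U_k\}$. To do so, I would use a diagonal argument: the sequence $\lambda_k(E)=\int_E U_k\,dx$ is uniformly countably additive on the $\sigma$-algebra of measurable subsets of $\om$ (by uniform integrability) and uniformly bounded; pick a countable generating family $\{E_j\}$ of sets and use boundedness to extract, by Cantor's diagonal procedure, a subsequence along which $\int_{E_j} U_k\,dx$ converges for every $j$. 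Uniform countable additivity then promotes this to convergence of $\int_\om U_k\cdot \chi_E v\,dx$ for all simple $v\in L^\infty(\om;\rn)$, and a density/uniform integrability argument (essentially Vitali's theorem) upgrades it to convergence against every $v \in L^\infty(\om;\rn)$; the limit defines an element of $L^1(\om;\rn)$ by Fatou together with the uniform $L^A$ bound.

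For the reverse implication, assume the family is relatively weakly compact. Weakly compact sets are norm bounded, so $\sup_\sigma \|U_\sigma\|_{L^1(\om;\rn)}<\infty$. Suppose, towards a contradiction, that uniform integrability fails: then one can extract a sequence $\{U_{\sigma_k}\}$ and measurable sets $E_k\subset\om$ with $|E_k|\to 0$ yet $\int_{E_k}|U_{\sigma_k}|\,dx\geq \varepsilon$ for some $\varepsilon>0$. By weak relative compactness and Eberlein--\v{S}mulian, pass to a further subsequence so that $U_{\sigma_k}\to U$ weakly in $L^1(\om;\rn)$; in particular, the measures $\nu_k(E):=\int_E U_{\sigma_k}\,dx$ are uniformly absolutely continuous with respect to Lebesgue measure (this is a standard consequence of weak convergence in $L^1$, which can be proved via Egorov and the Hahn decomposition applied to $U_{\sigma_k}-U$). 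This contradicts $|\nu_k(E_k)|\gtrsim \varepsilon$ with $|E_k|\to 0$, completing the argument.

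The main technical obstacle is precisely that last step in the reverse direction, namely deducing \emph{uniform} absolute continuity of a weakly convergent sequence in $L^1$; this is the nontrivial part of the classical proof and is where one typically appeals to the Vitali--Hahn--Saks theorem on the uniform countable additivity of a pointwise convergent sequence of signed measures.
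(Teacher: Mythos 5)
The paper does not actually prove this statement: it is recalled, alongside the Vitali, De La Vall\'ee Poussin and Banach--Alaoglu theorems, as a classical background result of functional analysis, so there is no proof in the paper to compare yours against. Your sketch is the standard textbook proof of the Dunford--Pettis theorem (superlinear bound via de la Vall\'ee Poussin, Eberlein--\v{S}mulian reduction to sequences, diagonal extraction over a countable generating family of sets, and Vitali--Hahn--Saks for the converse), and the componentwise reduction of the vector-valued case to the scalar one is legitimate. Two steps should be tightened. First, in the forward direction, identifying the weak limit as an element of $L^1(\om;\rn)$ is not a consequence of Fatou (weak convergence gives no pointwise information); the correct route is to note that $\lambda(E)=\lim_k\int_E U_k\,dx$ is countably additive and absolutely continuous with respect to Lebesgue measure -- this is exactly what the uniform absolute continuity furnished by uniform integrability gives -- and then to invoke the Radon--Nikod\'ym theorem; convergence against every $v\in L^\infty(\om;\rn)$ then follows from sup-norm density of simple functions together with the uniform $L^1$ bound, and the appeal to de la Vall\'ee Poussin is in fact superfluous, though harmless. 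Second, in the converse, failure of uniform integrability yields $\int_{E_k}|U_{\sigma_k}|\,dx\geq\varepsilon$ with $|E_k|\to 0$, which does not directly contradict the uniform absolute continuity of $\nu_k(E)=\int_E U_{\sigma_k}\,dx$ obtained from Vitali--Hahn--Saks, because of possible cancellation; you must split $E_k$ according to the signs of the components of $U_{\sigma_k}$ (the Hahn decomposition you mention only in passing) to produce subsets $E_k^{\pm}\subset E_k$ on which $|\nu_k(E_k^{\pm})|\geq \varepsilon/(2n)$, and only then does the contradiction close. With these two repairs the argument is the classical one and is correct.
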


\begin{theorem}{\rm {\bf [Anisotropic De La Vall\'ee Poussin]}} \label{theo:delaVP}
 Let $\Phi$ be an $n$-dimensional  $N$-function.
 Assume that  $\{U_\sigma\}_{\sigma \in
\Sigma}$ is a family of  functions in $\mathcal M(\om; \rn)$
such that $\sup_{\sigma \in \Sigma}\int_\Omega \Phi(U_\sigma)\,
dx<\infty$. Then the family $\{U_\sigma\}$ is uniformly integrable.
\end{theorem}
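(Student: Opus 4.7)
The plan is to mimic the classical scalar De La Vallée Poussin argument, the key observation being that the only feature of $\Phi$ actually required is the superlinear growth condition at infinity in the definition of $n$-dimensional $N$-function, namely $\lim_{|\xi|\to\infty}\Phi(\xi)/|\xi|=\infty$. Setting $K:=\sup_{\sigma\in\Sigma}\int_\Omega\Phi(U_\sigma)\,dx<\infty$, I would define, for each $M>0$,
$$\lambda(M) := \inf_{|\xi|\geq M}\frac{\Phi(\xi)}{|\xi|},$$
and observe that the superlinear growth condition guarantees $\lambda(M)\to\infty$ as $M\to\infty$ (using that $\Phi$ is finite-valued and convex, hence continuous on $\rn$). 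The central estimate is then
$$\int_{\{|U_\sigma|>M\}}|U_\sigma|\,dx \leq \frac{1}{\lambda(M)}\int_{\{|U_\sigma|>M\}}\Phi(U_\sigma)\,dx \leq \frac{K}{\lambda(M)},$$
which follows pointwise from the definition of $\lambda(M)$ on the set $\{|U_\sigma|>M\}$.

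From this estimate I would deduce the two ingredients of uniform integrability. First, $L^1$-boundedness: fixing any $M_0>0$,
$$\int_\Omega|U_\sigma|\,dx \leq M_0|\Omega| + \frac{K}{\lambda(M_0)},$$
which is finite and independent of $\sigma$. Second, equi-absolute continuity: given $\varepsilon>0$, I first choose $M$ so large that $K/\lambda(M)<\varepsilon/2$, and then $\delta<\varepsilon/(2M)$; for any measurable $E\subset\Omega$ with $|E|<\delta$,
$$\int_E|U_\sigma|\,dx \leq M|E| + \int_{\{|U_\sigma|>M\}}|U_\sigma|\,dx \leq M\delta + \frac{K}{\lambda(M)} < \varepsilon,$$
uniformly in $\sigma$. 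This is precisely the uniform integrability notion employed in Theorems~\ref{theo:VitConv} and \ref{theo:dunf-pet}.

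There is no genuine obstacle: the proof is essentially one-dimensional in spirit, since the only property of the vector $U_\sigma(x)$ that enters is its Euclidean length $|U_\sigma(x)|$ compared pointwise with $\Phi(U_\sigma(x))$. The subtle point worth flagging is the essential use of the $N$-function hypothesis (as opposed to merely being a Young function): if $\Phi$ were allowed to grow only linearly at infinity, then $\lambda(M)$ would remain bounded and the conclusion would fail, as the scalar counterexample $\Phi(t)=t$ already shows.
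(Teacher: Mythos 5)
Your proof is correct: the pointwise comparison $|U_\sigma|\le \Phi(U_\sigma)/\lambda(M)$ on $\{|U_\sigma|>M\}$, together with $\lambda(M)\to\infty$ (which does follow from \eqref{lim}, since that limit is uniform in the direction of $\xi$, and $\lambda(M)>0$ because $\Phi$ is continuous, vanishes only at $0$, and is superlinear at infinity), yields exactly the $L^1$-bound and equi-absolute continuity required. The paper states this theorem without proof, as a recalled classical fact, and your argument is precisely the standard de la Vall\'ee Poussin reasoning one would supply, so nothing further is needed.
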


The next result follows from the customary  version of the Banach-Alaoglu theorem, owing to~property \eqref{Dual} applied to $\Phi$ and $\widetilde \Phi$.  
Notice that, in view of that property, a sequence   $\{U_k\} \subset L^\Phi (\om, \rn)$   weakly-$*$ converges to   $U \in L^\Phi (\om, \rn)$  in $L^\Phi (\om, \rn)$    if
$$\lim_{k\to \infty} \int _\om U_k \cdot V \, dx = \int _\om U \cdot V \, dx$$
for every $V \in E^{\widetilde \Phi} (\om, \rn)$. Weak-$*$ convergence in $L^{\widetilde \Phi} (\om, \rn)$ can be characterized on exchanging the~roles of~$\Phi$ and $\widetilde \Phi$.

\begin{theorem}{\rm {\bf [Banach-Alaoglu in anisotropic Orlicz spaces]}}\label{theo:Banach-Alaoglu}
 Let $\Phi$ be an $n$-dimensional  $N$-function.
 Then
the closed unit ball  in $L^\Phi(\om; \rn)$ and the closed unit ball  in $L^{\widetilde \Phi}(\om; \rn)$
 are
 weakly-$*$ compact in the respective spaces.
\end{theorem}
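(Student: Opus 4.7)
The plan is to reduce the statement to the classical Banach--Alaoglu theorem by invoking the duality property \eqref{Dual} that identifies $L^{\widetilde\Phi}(\om;\rn)$ with the topological dual of $E^\Phi(\om;\rn)$. The involution $\widetilde{(\widetilde\Phi)} = \Phi$, valid for every $N$-function, lets us apply the same duality with $\Phi$ and $\widetilde\Phi$ interchanged, so that $L^\Phi(\om;\rn)$ is isomorphic and homeomorphic to the topological dual of $E^{\widetilde\Phi}(\om;\rn)$. Once this identification is in place, the weak-$*$ topology on $L^\Phi(\om;\rn)$ in the sense of the statement (convergence tested against elements of $E^{\widetilde\Phi}(\om;\rn)$ via the duality pairing $(U,V)\mapsto \int_\om U\cdot V\,dx$) is precisely the weak-$*$ topology of the dual of the normed space $E^{\widetilde\Phi}(\om;\rn)$.

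First I would verify that the pairing $(U,V)\mapsto \int_\om U\cdot V\,dx$ realizes the isomorphism asserted by \eqref{Dual}; this is part of the content of \eqref{Dual} as stated via \cite{AdBF1}, and the integrals are finite by the H\"older-type inequality \eqref{holder} together with the fact that $V\in E^{\widetilde\Phi}\subset L^{\widetilde\Phi}$. Second, I would note that $E^{\widetilde\Phi}(\om;\rn)$, equipped with the norm inherited from $L^{\widetilde\Phi}(\om;\rn)$, is a normed vector space (in fact a closed subspace of $L^{\widetilde\Phi}$), so that the classical Banach--Alaoglu theorem applies to its dual. Third, the theorem yields that the closed unit ball of $(E^{\widetilde\Phi}(\om;\rn))^*$ is weak-$*$ compact, and transporting this through the isometric isomorphism with $L^\Phi(\om;\rn)$ proves the first half of the statement. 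The second half follows by exchanging the roles of $\Phi$ and $\widetilde\Phi$.

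There is no real obstacle: the entire argument is a transfer of the classical Banach--Alaoglu theorem through the duality \eqref{Dual}, the only subtle point being that one must test against functions in the subspace $E^{\widetilde\Phi}$ (respectively $E^\Phi$) rather than the whole space $L^{\widetilde\Phi}$ (respectively $L^\Phi$), since in the absence of $\Delta_2$ the inclusion in \eqref{EPP} may be strict and only $E^{\widetilde\Phi}$ is identified as the predual. This is, however, exactly what the definition of weak-$*$ convergence given just before the theorem statement prescribes, so the reduction is clean.
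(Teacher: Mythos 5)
Your proposal is correct and follows essentially the same route as the paper, which simply observes that the theorem is the customary Banach--Alaoglu theorem transported through the duality \eqref{Dual} applied to both $\Phi$ and $\widetilde\Phi$ (using $\widetilde{(\widetilde\Phi)}=\Phi$), with weak-$*$ convergence tested against $E^{\widetilde\Phi}(\om;\rn)$, respectively $E^{\Phi}(\om;\rn)$. Your additional remarks on the pairing and on why one must test against the $E$-spaces rather than the full Orlicz spaces merely spell out what the paper leaves implicit.
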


\section{Main results}\label{main}


This is a  section of the paper where definitions of solutions to the Dirichlet problem \eqref{eq:main:f} are introduced and the pertaining existence, uniqueness, and regularity results are stated. In what follows, when referring to assumptions \eqref{A3}, \eqref{A2'}, and~\eqref{A2''}, we mean that they are fulfilled for some $N$-function $\Phi$, some function $h\in L^1(\Omega)$, and some constant  $c_\Phi\in(0,1]$.

\subsection{Weak solutions}\label{sec:weaksol}

Our first purpose is to detect  a minimal integrability condition on the datum  $f$ for a weak solution to  problem \eqref{eq:main:f} to exist.  In order to allow for the largest possible class of~admissible functions $f$, in the definition of weak solution that will be adopted the function $f$ is a~priori assumed to~be just integrable in $\om$. The class of test functions  is thus accordingly chosen for the  weak formulation of the problem to be well posed for any such $f$.

\begin{definition}\label{weaksol} {\rm {\bf [Weak solution]}} Let $f \in L^1(\Omega)$.  Under assumptions \eqref{A3}--\eqref{A2''}, a function $u \in W_0^{1}\mathcal L^{\Phi} (\Omega)$ is called a weak
solution to the Dirichlet problem \eqref{eq:main:f}  if
\begin{equation}
\label{weak-bound-e} \int_\Omega a(x, \nabla u) \cdot \nabla
\varphi\, dx = \int_\Omega f\varphi\,dx
\end{equation}
for every    $\varphi \in W_0^{1}\mathcal L^\Phi (\Omega) \cap
L^{\infty}(\Omega)$.
\end{definition}

Observe that both sides of equality \eqref{weak-bound-e} are well defined if $f$, $u$ and $\varphi$ are as in definition~\eqref{weaksol}. In particular, the integral on the left-hand side of \eqref{weak-bound-e}  is convergent by the H\"older inequality~\eqref{holder}, since, owing to  assumption \eqref{A2''}, $a(x,\nabla
u) \in L^{\widetilde \Phi}(\Omega ;\rn)$ provided that $u \in W_0^{1}\mathcal L^{\Phi} (\Omega)$.


Our main result about weak solutions is contained in Theorem~\ref{theo:boundex}. Its assumptions in connection with the existence (and uniqueness) of these solutions take a form of an alternative, depending on~a~threshold on the growth near infinity of the function $\Phi$. More precisely, what is relevant is the growth of its \lq\lq{}average\rq\rq{} $\Phi_\circ$, defined as in \eqref{phistar}, and the alternative corresponds to the two complementary conditions~\eqref{intdiv} and~\eqref{intconv}. Indeed, if $\Phi_\circ$ grows fast enough near infinity for the latter condition to hold, then any integrable function $f$ is admissible. On the other hand, if \eqref{intconv}  fails, and hence the former condition is in force, then a proper degree of integrability has to be imposed on $f$.
A natural ambient space for $f$ is the largest rearrangement-invariant space ensuring that the integral on the right-hand side of equation \eqref{weak-bound-e} is convergent for every test function $\varphi \in W^1_0L^\Phi(\om)$. This corresponds to the associate space $L[\widetilde {\Phi_\circ}, n](\om)$ of the optimal rearrangement-invariant target space $L(\widehat {\Phi_\circ}, -n)(\om)$
for embeddings of $W^1_0L^\Phi(\om)$ -- see \eqref{optemb}. Theorem \ref{theo:boundex} asserts that  the Dirichlet problem \eqref{eq:main:f} does actually admit a~unique weak solution  provided that $f$ belongs to the separable counterpart $E[\widetilde {\Phi_\circ}, n](\om)$ of $L[\widetilde {\Phi_\circ}, n](\om)$, defined as in \eqref{E}.
\\ As will be clear from Example \ref{ex-plap} in the next section, in the classical case of $p$-Laplacian-type problems, the two alternatives discussed above correspond to the situations when $p\leq n$ or $p>n$.   In the former case, our assumption  amounts to requiring that $f$ belongs to the  Lorentz space $L^{[\frac{np}{np+p-n}, p']}(\om)$, where $p'=\tfrac{p}{p-1}$, thus weakening the customary condition that $f \in L^{\frac{np}{np+p-n}}(\om)$.

\begin{theorem}
\label{theo:boundex} {\rm {\bf [Existence of weak solutions]}} Let
$\Omega$ be a bounded Lipschitz domain in $\rn$. Assume that
conditions  {\eqref{A3}--\eqref{A2''}} are in force, and  let $\Phi_\circ $ be the function associated with $\Phi$ as in \eqref{phistar}.
If~either
\begin{equation}\label{hpf}
\Phi_\circ \hbox{\quad grows so slowly that \eqref{intdiv} \ \
holds \  and  \ \  $f \in E[\widetilde {\Phi_\circ}, n](\om)$,}
\end{equation}
or
\begin{equation}\label{hpfbis}
\Phi_\circ \hbox{\quad grows so fast that \eqref{intconv}  \ \
holds \  and  \ \ $f \in L^1(\om)$\,,}
\end{equation}
 then there exists a unique weak solution $u\in W^1_0\mathcal L^{\Phi}(\om)$
to the Dirichlet problem \eqref{eq:main:f}.
\end{theorem}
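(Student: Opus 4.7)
The plan is to construct the weak solution by a double approximation --- regularize both the datum $f$ and the coercivity function $\Phi$ --- so as to reduce to reflexive Orlicz--Sobolev spaces where standard monotone operator theory applies, then pass to the limit via a Minty-type monotonicity argument. To circumvent the lack of $\Delta_2$ or $\nabla_2$ on $\Phi$, for $\varepsilon\in(0,1)$ I set $\Phi^\varepsilon(\xi):=\Phi(\xi)+\varepsilon|\xi|^p$ and $a^\varepsilon(x,\xi):=a(x,\xi)+\varepsilon|\xi|^{p-2}\xi$ for some fixed $p>n$, making $W_0^1L^{\Phi^\varepsilon}(\om)$ reflexive and continuously embedded into $L^\infty(\om)$ by \eqref{B-Wter}. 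Simultaneously I pick $\{f_k\}\subset L^\infty(\om)$ converging to $f$: in case \eqref{hpfbis}, strongly in $L^1(\om)$; in case \eqref{hpf}, modularly in $E[\widetilde{\Phi_\circ},n](\om)$. The classical theory of monotone operators in reflexive Banach spaces (Browder--Minty) then yields a unique weak solution $u_{k,\varepsilon}\in W_0^1L^{\Phi^\varepsilon}(\om)$ to the regularized Dirichlet problem with datum $f_k$ and operator $-\dv\,a^\varepsilon$.

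The first quantitative step is a uniform energy estimate obtained by testing with $u_{k,\varepsilon}$ itself, admissible since $u_{k,\varepsilon}\in L^\infty(\om)$. Using coercivity \eqref{A2'},
\begin{equation*}
\int_\om \Phi(\nabla u_{k,\varepsilon})\,dx+\varepsilon\int_\om |\nabla u_{k,\varepsilon}|^p\,dx\leq \int_\om f_k\,u_{k,\varepsilon}\,dx.
\end{equation*}
In case \eqref{hpf}, I bound the right-hand side via the rearrangement H\"older inequality \eqref{holderint}, which pairs $L(\widehat{\Phi_\circ},-n)(\om)$ with $L[\widetilde{\Phi_\circ},n](\om)$, combined with the sharp modular Sobolev inequality \eqref{optint}, with an appropriate scaling so that the gradient modular on the right can be absorbed. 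In case \eqref{hpfbis}, the Sobolev embedding \eqref{B-Wter} applied to $\Phi$ itself bounds $\|u_{k,\varepsilon}\|_{L^\infty(\om)}$ by $\|\nabla u_{k,\varepsilon}\|_{L^\Phi(\om;\rn)}$, and a standard argument closes the estimate. Either way, combined with the growth condition \eqref{A2''}, one obtains uniform bounds on $\int_\om \Phi(\nabla u_{k,\varepsilon})\,dx$ and on $\int_\om \widetilde\Phi\bigl(c_\Phi\,a(x,\nabla u_{k,\varepsilon})\bigr)\,dx$. By Theorems \ref{theo:delaVP} and \ref{theo:Banach-Alaoglu}, extracting subsequences, $\nabla u_{k,\varepsilon}\rightharpoonup^*\nabla u$ in $L^\Phi(\om;\rn)$, $a(x,\nabla u_{k,\varepsilon})\rightharpoonup^*\mathcal{A}$ in $L^{\widetilde\Phi}(\om;\rn)$, and $u_{k,\varepsilon}\to u$ a.e.\ in $\om$, first as $k\to\infty$ and then $\varepsilon\to 0$, with $u\in W_0^1\mathcal L^\Phi(\om)$.

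The hardest step --- and the main obstacle --- is identifying $\mathcal{A}=a(x,\nabla u)$ in the non-reflexive setting, where test functions must be bounded and only weak-$*$ convergence of the gradients is available. I would use Minty's trick: from monotonicity \eqref{A3}, for every $V\in L^\infty(\om;\rn)$ of compact support,
\begin{equation*}
\int_\om \bigl(a(x,\nabla u_{k,\varepsilon})-a(x,V)\bigr)\cdot(\nabla u_{k,\varepsilon}-V)\,dx\geq 0.
\end{equation*}
Testing the regularized equation with $u_{k,\varepsilon}$ gives $\int_\om a(x,\nabla u_{k,\varepsilon})\cdot\nabla u_{k,\varepsilon}\,dx\leq\int_\om f_k u_{k,\varepsilon}\,dx$, whose right-hand side converges to $\int_\om fu\,dx$. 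Proposition \ref{prop:conv:mod-weak} together with the uniform bound on $\int\widetilde\Phi(c_\Phi a(x,\nabla u_{k,\varepsilon}))\,dx$ then handles the passage to the limit in the cross terms. Combining these and extending the limiting weak formulation from $\varphi\in C_0^\infty(\om)$ to $\varphi=T_t(u)$ (permissible via Proposition \ref{prop:approx}) yields
\begin{equation*}
\int_\om \bigl(\mathcal{A}-a(x,V)\bigr)\cdot(\nabla u-V)\,dx\geq 0.
\end{equation*}
Setting $V=\nabla u+\lambda W$ for $W\in L^\infty(\om;\rn)$ of compact support and letting $\lambda\to 0^\pm$ gives $\mathcal{A}=a(x,\nabla u)$ a.e., completing the existence proof.

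For uniqueness, given two weak solutions $u_1,u_2$, the function $\tfrac12 T_t(u_1-u_2)$ belongs to $W_0^1\mathcal L^\Phi(\om)\cap L^\infty(\om)$ --- the scaling by $1/2$ ensures the gradient modular remains finite by convexity of $\Phi$, avoiding the need for $\Delta_2$ --- and is thus an admissible test function. Subtracting the weak formulations and testing yields
\begin{equation*}
\int_{\{|u_1-u_2|\leq t\}} \bigl(a(x,\nabla u_1)-a(x,\nabla u_2)\bigr)\cdot(\nabla u_1-\nabla u_2)\,dx=0,
\end{equation*}
so strict monotonicity \eqref{A3} forces $\nabla u_1=\nabla u_2$ on $\{|u_1-u_2|\leq t\}$; letting $t\to\infty$ and invoking the anisotropic Poincar\'e inequality \eqref{anisopoincnorm} gives $u_1=u_2$.
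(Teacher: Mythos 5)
Your overall architecture (regularize, uniform modular estimate, weak-$*$ compactness, Minty identification, truncation-based uniqueness) parallels the paper's, and your uniqueness argument is correct and even more direct than the paper's: testing with $\tfrac12 T_t(u_1-u_2)$, which stays in the Orlicz--Sobolev class by convexity of $\Phi$, and concluding via the Poincar\'e inequality \eqref{anisopoincnorm} is exactly the right device in the absence of $\Delta_2$. The existence part, however, has genuine gaps. The first and most serious one is the reflexivity claim: $\Phi^\ep(\xi)=\Phi(\xi)+\ep|\xi|^p$ does \emph{not} make $W^1_0L^{\Phi^\ep}(\om)$ reflexive. Reflexivity requires $\Phi^\ep\in\Delta_2\cap\nabla_2$ near infinity, and adding a power cannot restore $\Delta_2$ when $\Phi$ grows, say, exponentially (then $\Phi^\ep$ is equivalent to $\Phi$ near infinity, so $L^{\Phi^\ep}=L^\Phi$). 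Hence the classical Browder--Minty theorem in reflexive spaces does not furnish your approximate solutions $u_{k,\ep}$, and the whole scheme starts from an unproved existence statement. This is precisely why the paper perturbs by $\ep A'(|\xi|)\tfrac{\xi}{|\xi|}$ with an $N$-function $A$ \emph{dominating} $\Phi$ and growing faster than $t^q$, $q>n$, writes $f=f_\om-\dv F$ with $F\in L^{q'}(\om;\rn)\subset E^{\widetilde A}(\om;\rn)$ via the Bogovskii operator, and invokes Gossez's existence theory, which is designed for non-reflexive Orlicz--Sobolev spaces (Proposition \ref{prop:reg-bound-e}); keeping $f$ itself (rather than truncations $f_k$) also spares you a double limit.

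The second gap is the a priori estimate. In case \eqref{hpf} your ``appropriate scaling'' can indeed be made to work, but only by rescaling \eqref{holderint} with a large parameter $\lambda$, using convexity to absorb $\tfrac12\int_\Omega\Phi(\nabla u_{k,\ep})\,dx$, and crucially invoking that $f$ lies in the \emph{E}-class so that $\int_0^{|\om|}\widetilde{\Phi_\circ}(\lambda s^{1/n}f^{**}(s))\,ds<\infty$ for that fixed large $\lambda$; this should be said explicitly, since it is the only place where $E[\widetilde{\Phi_\circ},n](\om)$ rather than $L[\widetilde{\Phi_\circ},n](\om)$ enters. In case \eqref{hpfbis}, however, ``a standard argument closes the estimate'' hides a real obstruction: $\int_\om f u\,dx\le\kappa_4\|f\|_{L^1(\om)}\|\nabla u\|_{L^\Phi(\om;\rn)}$ is linear in the \emph{norm}, and without $\nabla_2$ a norm cannot be absorbed into the modular $\int_\om\Phi(\nabla u)\,dx$ (one only has modular $\ge$ norm when the norm exceeds $1$, which is useless once $\kappa_4\|f\|_{L^1(\om)}\ge1$). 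The paper circumvents this in both cases through the symmetrization comparison of \cite{Ci-sym}, $(u^\ep)^*\le v^*$ with $v$ the explicit radial solution \eqref{june23} of the $\Phi_\Diamond$-symmetrized problem, which bounds the right-hand side by a quantity depending only on $f$, $n$, $\Phi$ (Proposition \ref{prop:uniform1}). Finally, your Minty step ends by inserting $V=\nabla u+\lambda W$, which is not an admissible choice after you have (correctly) restricted the limiting inequality to bounded $V$; even formally, continuity and uniform integrability of $a(x,\nabla u+\lambda W)$ as $\lambda\to0$ must be justified. The repair is the paper's Proposition \ref{prop:mon-e}: work on the invading sets $\{|\nabla u|\le j\}$, choose $V=\nabla u\,\chi_{\{|\nabla u|\le k\}}+\sigma Z\chi_{\{|\nabla u|\le j\}}$, and pass to the limit via the De La Vall\'ee Poussin and Vitali theorems; note also that the cross term $\int_\om a(x,V)\cdot\nabla u_{k,\ep}\,dx$ is not controlled by weak-$*$ convergence alone, since $a(x,V)\in L^{\widetilde\Phi}(\om;\rn)$ need not belong to $E^{\widetilde\Phi}(\om;\rn)$.
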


%

In some applications, we need to make use of the solution $u$ itself as a test function $\varphi$  in equation~\eqref{weak-bound-e} in the  definition of weak solution to problem \eqref{eq:main:f}. This requires $u$ to be bounded. An optimal condition on $f$ for this property to hold is exhibited in the next result.

\begin{proposition}
\label{boundedsol} {\rm {\bf [Boundedness of weak solutions]}}
Assume, in addition to the assumptions of~Theorem~\ref{theo:boundex}, that
 \begin{equation}
 \label{boundcond}
 \int _0^{|\om|}s^{-\frac{1}{n'}}\Psi_\circ^{-1}\big(\lambda s^{\frac{1}{n}}f^{**}(s)\big)\, ds < \infty\,
 \end{equation}
 for every $\lambda >0$, where $\Psi_\circ$ is defined as in \eqref{psicirc}.
 Then $u \in L^{\infty}(\Omega)$, and
there exists a constant $C=C(n)$ such that
 \begin{equation}\label{boundcond1}\|u\|_{L^\infty(\om)} \leq C \int _0^{|\om|}s^{-\frac{1}{n'}}\Psi_\circ^{-1}\big(C s^{\frac{1}{n}}f^{**}(s)\big)\, ds\,.
 \end{equation}
\end{proposition}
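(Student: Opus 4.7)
The plan is to adapt the Talenti--Cianchi symmetrization method to the present anisotropic Orlicz setting: I would derive a pointwise differential inequality for the decreasing rearrangement $u^*$ of the weak solution $u$ provided by Theorem~\ref{theo:boundex}, and then integrate it over $(0,|\om|)$. As a first step, for each $t\geq 0$ and $h>0$, set $G_t(s):=\sign(s)(|s|-t)_+$ and test equation~\eqref{weak-bound-e} with $\phi=T_h(G_t(u))$. This $\phi$ is admissible, since $|\nabla G_t(u)|\leq|\nabla u|$ places $G_t(u)$ in $W_0^{1}\mathcal L^\Phi(\om)$, and $T_h$ renders $\phi$ bounded. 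With $\nabla\phi=\nabla u\,\chi_{\{t<|u|<t+h\}}$, the coercivity assumption \eqref{A2'} on the left, and $|\phi|\leq h\chi_{\{|u|>t\}}$ combined with the Hardy--Littlewood inequality on the right, yield
\[
\int_{\{t<|u|<t+h\}}\!\Phi(\nabla u)\,dx\;\leq\; h\int_{\{|u|>t\}}|f|\,dx\;\leq\; h\,\mu_u(t)\,f^{**}(\mu_u(t)).
\]

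Next I would bound the left-hand side from below by applying the anisotropic P\'olya--Szeg\"o principle -- the ingredient underlying the sharp Orlicz--Sobolev embedding~\eqref{B-W}, see~\cite{Cfully} -- to the bounded truncation $w:=(\min(|u|,t+h)-t)_+\in W_0^{1}L^\Phi(\om)$, whose decreasing rearrangement is $w^*(s)=(\min(u^*(s),t+h)-t)_+$. Passing to radial coordinates on the symmetrized ball, the radial derivative of $w^\bigstar$ equals $\kappa\,s^{1/n'}(-u^{*\prime}(s))$ on the set $\{t<u^*(s)<t+h\}$ for a dimensional constant $\kappa=\kappa(n)$, whence
\[
\int_{\{t<|u|<t+h\}}\!\Phi(\nabla u)\,dx\;\geq\int_{\sigma(t+h)}^{\sigma(t)}\Phi_\circ\!\bigl(\kappa\,s^{1/n'}(-u^{*\prime}(s))\bigr)\,ds,
\]
with $\sigma(t):=\mu_u(t)$. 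Combining both estimates, dividing by $h$, and letting $h\to 0^+$ at any common Lebesgue point of $\sigma$ and of the integrand, one obtains for a.e.\ $t>0$
\[
-\sigma'(t)\,\Phi_\circ\!\bigl(\kappa\,\sigma(t)^{1/n'}(-u^{*\prime}(\sigma(t)))\bigr)\;\leq\;\sigma(t)\,f^{**}(\sigma(t)).
\]
Using the identity $u^*(\sigma(t))=t$, which gives $-\sigma'(t)=-1/u^{*\prime}(\sigma(t))$, and the definition~\eqref{psicirc} of $\Psi_\circ$, the preceding inequality rearranges -- with $X:=\kappa\,\sigma(t)^{1/n'}(-u^{*\prime}(\sigma(t)))$ -- into $\Psi_\circ(X)\leq \kappa^{-1}\sigma(t)^{1/n}f^{**}(\sigma(t))$. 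Inverting the increasing function $\Psi_\circ$ and changing variable $s=\sigma(t)$ produces the pointwise bound
\[
-u^{*\prime}(s)\;\leq\;\frac{1}{\kappa\,s^{1/n'}}\,\Psi_\circ^{-1}\!\bigl(\kappa^{-1}\,s^{1/n}\,f^{**}(s)\bigr)\quad\text{for a.e.\ }s\in(0,|\om|),
\]
trivially extended over the intervals where $u^*$ is constant (there $u^{*\prime}=0$).

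Integrating the last inequality over $(0,|\om|)$ then gives, for a suitable $C=C(n)$,
\[
\|u\|_{L^\infty(\om)}\;=\;u^*(0)\;=\;-\!\int_0^{|\om|}\!u^{*\prime}(s)\,ds\;\leq\; C\int_0^{|\om|}\! s^{-1/n'}\,\Psi_\circ^{-1}\!\bigl(C\,s^{1/n}\,f^{**}(s)\bigr)\,ds,
\]
which is exactly~\eqref{boundcond1}; hypothesis~\eqref{boundcond} guarantees that the right-hand side is finite, so $u\in L^\infty(\om)$. The main obstacle I anticipate lies in the careful justification of the limit $h\to 0^+$ in the symmetrized inequality -- specifically, in handling values of $t$ at which $\mu_u$ is discontinuous (corresponding to level sets of $u$ with positive measure), intervals on which $u^*$ is constant, and checking that $\sigma(t)$ is a Lebesgue point of the integrand for a.e.~$t$ -- together with verifying that the anisotropic P\'olya--Szeg\"o inequality applies to $w$ in the Orlicz framework without additional regularity on $u$ beyond its membership in $W_0^{1}\mathcal L^\Phi(\om)$.
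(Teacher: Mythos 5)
Your overall strategy (a Talenti-type pointwise differential inequality for $u^*$, then integration) is a legitimate route, but it is not the paper's. The paper's proof of Proposition~\ref{boundedsol} is essentially three lines: the comparison \eqref{june29}, already established in the proof of Theorem~\ref{theo:boundex} via \cite[Theorem~3.1]{Ci-sym}, gives $(u^{\ep_k})^*\le v^*$ with $v$ the explicit radial solution \eqref{june23} of the symmetrized problem \eqref{symmetrized}; hence $\|u^{\ep_k}\|_{L^\infty(\om)}\le \|v\|_{L^\infty(\om^\bigstar)}$, which is exactly the integral in \eqref{boundconddiamond1}, and one concludes by letting $k\to\infty$ through the a.e.\ convergence \eqref{andrea31} and converting $\Psi_\Diamond$ into $\Psi_\circ$ via \eqref{equivdiamond}. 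What you propose is, in effect, to re-derive the machinery behind that comparison theorem: your layer-truncation/P\'olya--Szeg\"o computation is the argument underlying \cite[Inequalities (5.5)--(5.6)]{Ci-sym}, which the paper quotes (see \eqref{andrea7} in the proof of Proposition~\ref{prop:aniso-est}) rather than reproves. Working directly with the weak solution $u$ is fine, since the truncations you use are admissible test functions; but the step you yourself flag -- justifying the limit $h\to0^+$ at jumps of $\mu_u$, flat zones of $u^*$, and non-Lebesgue points -- is precisely the content carried by that citation, and your sketch does not yet supply it.

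There is also a silent gap whose inequality goes the wrong way: you conclude with $\|u\|_{L^\infty(\om)}=u^*(0)=-\int_0^{|\om|}u^{*\prime}(s)\,ds$. For a merely non-increasing $u^*$ one only has $\int_0^{|\om|}(-u^{*\prime}(s))\,ds\le u^*(0^+)$, possibly strictly; if $u^*$ had a singular part, your pointwise bound on $-u^{*\prime}$ would control only the absolutely continuous part of the decay of $u^*$ and would not yield the $L^\infty$ bound. You would need local absolute continuity of $u^*$, which is not automatic (though it can be extracted from the P\'olya--Szeg\"o principle, since weak differentiability of the symmetral of the truncations forces the monotone profile to be locally absolutely continuous). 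The cleaner and standard repair is to avoid $u^{*\prime}$ altogether: derive the differential inequality in the form $1\le(-\mu_u'(t))\,G(\mu_u(t))$ for a.e.\ $t\in(0,\|u\|_{L^\infty(\om)})$, with $G(s)=c\,s^{-1/n'}\Psi_\circ^{-1}\big(c\,s^{1/n}f^{**}(s)\big)$, integrate over $(0,T)$ for $T<\|u\|_{L^\infty(\om)}$, and use the monotone change-of-variables inequality $\int_0^T(-\mu_u'(t))G(\mu_u(t))\,dt\le\int_{\mu_u(T)}^{|\om|}G(s)\,ds$, which needs no absolute continuity and is exactly the device used at the end of the proof of Proposition~\ref{prop:aniso-est}; this gives $T\le\int_0^{|\om|}G(s)\,ds$ and hence \eqref{boundcond1}, with hypothesis \eqref{boundcond} guaranteeing finiteness.
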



\begin{remark}\label{bounddiamond}
{\rm Owing to equation \eqref{equivdiamond}, condition
\eqref{boundcond} can be equivalently formulated with $\Psi_\circ$
replaced by the function $\Psi_\Diamond$ defined by \eqref{Psi}. In
fact, the use of the latter function allows for an explicit sharp
value of the constant $\lambda$ in corresponding condition. Actually, the
weak solution $u$  to the Dirichlet problem  {\eqref{eq:main:f}--\eqref{A2''}} is
bounded provided that
 \begin{equation}\label{boundconddiamond}
\int _0^{|\om|}s^{-\frac{1}{n'}}\Psi_\Diamond^{-1}\bigg(\frac{
s^{\frac{1}{n}}}{n \omega_n^{1/n}}f^{**}(s)\bigg)\, ds < \infty\,.
 \end{equation}
 Moreover,
  \begin{equation}\label{boundconddiamond1}
\|u\|_{L^\infty(\om)} \leq \frac{1}{n \omega_n^{1/n}}\int
_0^{|\om|}s^{-\frac{1}{n'}}\Psi_\Diamond^{-1}\bigg(\frac{
s^{\frac{1}{n}}}{n \omega_n^{1/n}}f^{**}(s)\bigg)\, ds\,.
 \end{equation}
 Both condition \eqref{boundconddiamond} and the bound given by \eqref{boundconddiamond1} are sharp.
 The sufficiency of condition \eqref{boundconddiamond}, and the validity of estimate \eqref{boundconddiamond1} are apparent from a close inspection of the proof of Proposition \ref{boundedsol}. Their sharpness is due to the fact that equality holds in \eqref{boundconddiamond1} if $u$ is the solution to a suitable symmetric problem in a ball, which is stated in   equation \eqref{symmetrized} below.
 }
\end{remark}

\begin{remark}\label{weakregular}
{\rm
If condition   \eqref{boundconddiamond}, or even \eqref{boundcond}, is dropped,  boundedness of the weak solution  $u$ to~problem~\eqref{eq:main:f} $u$ is not guaranteed. In this case, sharp integrability properties of  $u$  can be derived via \cite[Proposition~3.7]{Ci-sym}.
 }
\end{remark}

\subsection{Approximable solutions}\label{sec:approxsol}

When neither of conditions \eqref{hpf} and  \eqref{hpfbis} holds,   weak solutions to~problem  \eqref{eq:main:f} do not necessarily exist. This calls for the use of some notion of solution, still weaker than that of weak solution, which enables to deal with  arbitrary right-hand sides $f \in L^1(\om)$, and yet with measure data, whatever $\Phi$ is. Merely distributional solutions are not satisfactory,  since even for linear equations this class of solutions does not guarantee uniqueness and permits well-known pathologies \cite{serrin}. These drawbacks can be overcome if, instead, solutions obtained as limits of solutions to approximating problems with regularized right-hand sides are introduced. Such a notion of solution has been extensively exploited, more or less explicitly, for nonlinear problems with isotropic growth -- see e.g.  \cite{bgSOLA, BBGGPV, dall, DMOP, min07, min-math-ann}. It restores uniqueness and, importantly, is well suited to analyze regularity.

\medskip

\par Approximable solutions to problem \eqref{eq:main:f} under the present assumptions on the differential operator, and with right-hand side in $L^1(\om)$, can be defined as follows.

%
%
%
%

\begin{definition}\label{def:as:f}  {\rm {\bf [Approximable solution with $L^1$ data
]}}  Let $f \in L^1(\Omega)$.   Under assumptions \eqref{A3}--\eqref{A2''}, a~function $u\in {\mathcal T}^{1,\Phi}_0(\Omega)$ is called an approximable solution to problem \eqref{eq:main:f} if there exists a~sequence
 $\{f_k \}\subset L^\infty (\Omega)$ such that $f_k\to f$ in $L^1 (\Omega)$, and
the sequence of~weak solutions $\{u_k\}\subset W^{1}_0\mathcal L^\Phi
(\Omega)$ to problems
\begin{equation}\label{prob:trunc} \begin{cases}
-\dv \, a(x,\nabla u_k) = f_k &\quad \mathrm{ in}\quad  \Omega\\
u_k =0 &\quad \mathrm{  on}\quad \partial\Omega,
\end{cases}
\end{equation}
%
%
%
satisfies
\begin{equation}
 \label{ae}
 u_k\to u\qquad  \text{a.e. in }\Omega.
 \end{equation}
\end{definition}

%
%

Despite its apparent mildness, this definition gives grounds for an adequate generalized notion of~solution $u$ to problem~\eqref{eq:main:f}. Indeed, although the function $u$ is a priori assumed only  to be the pointwise limit of the solutions $u_k$ to the approximating problems \eqref{prob:trunc}, its \lq\lq{}surrogate gradient\rq\rq{} $\nabla u$, in the sense of~\eqref{gengrad}, turns out to be the pointwise limit of the weak gradients $\nabla u_k$, and hence $a(x,\nabla u_k)\to a(x,\nabla u)$ a.e. in $\Omega$ as well.
\par  This fact, together with the uniqueness of the approximable solution $u$ and its regularity, are the subject of the next theorem. Information about regularity amounts to   membership of $u$ and $\nabla u$ in~Marcinkiewicz-type spaces associated with the functions $\vt _n, \varrho_n : (0,\infty) \to (0,\infty)$ defined by
 \begin{equation}
 \vt_n(t)=\frac{\Phi_n(t^{1/n'})}{t}\qquad \text{  and }\qquad  \varrho_n(t)=\frac{t}{\Phi^{-1}_n(t)^{n'}} \qquad \hbox{for}\quad t > 0,
 \label{vt1vt2}
 \end{equation}
respectively. Here, $\Phi_n$ denotes the Sobolev conjugate of $\Phi$ given by~\eqref{sobconj}.

\begin{theorem}\label{theo:main-f}  {\rm {\bf [Well-posedness and regularity with $L^1$ data]}} Let $\Omega$ be a bounded Lipschitz domain in $\rn$ and let $f \in L^{1}(\om)$. Assume that conditions \eqref{A3}--\eqref{A2''} and \eqref{intdiv} are in force.
 Then there exists a unique approximable solution $u\in {\mathcal T}^{1,\Phi}_0(\Omega)$ to the Dirichlet problem~\eqref{eq:main:f}.   If $\{u_k\}$ is any sequence as in the definition of approximable solution, then $\nabla u_k\to   \nabla u $ a.e. in $\Omega$, where $\nabla u$ has to be understood in the sense of equation \eqref{gengrad}.
Moreover,
\begin{equation}\label{umarc}
u \in L^{\vt_n(\cdot),\infty}(\om) \qquad \hbox{and} \qquad  \Phi(\nabla u) \in L^{\varrho_n(\cdot),\infty}(\om),
\end{equation}
where $\vt_n$ and $\varrho_n$ are the functions defined as in
\eqref{vt1vt2}.
\end{theorem}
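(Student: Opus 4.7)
My plan is classical in shape: construct $u$ as a pointwise limit of weak solutions to regularised problems, extract Marcinkiewicz bounds directly from the energy estimate produced by truncations, and promote the convergence to the level of the gradients via monotonicity. The background difficulty is that $\Phi$ satisfies neither $\Delta_2$ nor $\nabla_2$, so $W_0^1 L^\Phi(\Omega)$ is not reflexive and duality arguments must be replaced by modular substitutes. I first set $f_k = T_k(f)$, so that $f_k \to f$ in $L^1(\Omega)$ and $f_k \in L^\infty(\Omega) \subset E[\widetilde{\Phi_\circ},n](\Omega)$. Theorem~\ref{theo:boundex} then supplies a unique weak solution $u_k \in W_0^1 \mathcal L^\Phi(\Omega)$ of~\eqref{prob:trunc}, and Proposition~\ref{boundedsol} gives $u_k \in L^\infty(\Omega)$, so that $T_t(u_k)$ is an admissible test function. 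Testing~\eqref{prob:trunc} with $T_t(u_k)$ and using the coercivity~\eqref{A2'} yields the anisotropic Boccardo--Gallou\"et inequality
$$\int_\Omega \Phi(\nabla T_t(u_k))\,dx \leq t\,\|f\|_{L^1(\Omega)} \quad\text{for every } t>0.$$
Feeding this into the optimal Sobolev embedding~\eqref{B-W} applied to $T_t(u_k)$, choosing the parameter $t$ as a function of a level $s$ or $\tau$, and combining with Chebyshev's inequality, I obtain
$$|\{|u_k|>s\}| \leq C\,\vt_n(s)^{-1}, \qquad |\{\Phi(\nabla u_k)>\tau\}| \leq C\,\varrho_n(\tau)^{-1},$$
uniformly in $k$, which is~\eqref{umarc} at the approximating level. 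Assumption~\eqref{A2''} together with Theorems~\ref{theo:delaVP}--\ref{theo:dunf-pet} then upgrades these bounds to equi-integrability of $\{\nabla T_t(u_k)\}$ and of $\{a(x,\nabla u_k)\}$ in $L^1(\Omega;\rn)$. A compact embedding of the truncations into $L^1(\Omega)$ extracts a subsequence along which $u_k \to u$ a.e. and $a(x,\nabla u_k) \rightharpoonup \mathcal A$ weakly in $L^1(\Omega;\rn)$.

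The hard part is to identify $\mathcal A = a(x, \nabla u)$; this reduces to proving $\nabla u_k \to \nabla u$ a.e. Here I would implement the Boccardo--Murat monotonicity scheme in modular form: test~\eqref{prob:trunc} with $T_\delta(u_k - \psi_j)$, where $\psi_j \in C_0^\infty(\Omega)$ is a smooth approximation of $T_h(u)$; use the monotonicity~\eqref{A3} to force
$$\int_{\{|u_k-\psi_j|<\delta\}}\bigl(a(x,\nabla u_k) - a(x,\nabla \psi_j)\bigr)\cdot(\nabla u_k - \nabla \psi_j)\,dx \longrightarrow 0$$
as $k \to \infty$, then $j \to \infty$, then $h \to \infty$; and conclude from a standard measure-theoretic lemma that $\nabla u_k \to \nabla u$ in measure, hence a.e. along a subsequence. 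The non-reflexivity of $W_0^1 L^\Phi(\Omega)$ forces $\psi_j$ to be supplied through the modular smooth approximation of Proposition~\ref{prop:approx}, and every passage to the limit in the mixed products $\int a(x,\nabla u_k) \cdot \nabla \psi_j\,dx$ has to be justified via the modular-to-weak implication of Proposition~\ref{prop:conv:mod-weak} rather than by reflexivity. Coordinating these approximations with the $k \to \infty$ limit is the delicate heart of the argument, and is where all the preparatory work of Section~\ref{subapprox} is spent.

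Once $\nabla u_k \to \nabla u$ a.e. is in hand, the Carath\'eodory property gives $a(x,\nabla u_k) \to a(x,\nabla u)$ a.e., Vitali's Theorem~\ref{theo:VitConv} upgrades this to $L^1(\Omega;\rn)$-convergence, and passing to the limit in~\eqref{prob:trunc} against bounded test functions shows that $u$ is an approximable solution in the sense of Definition~\ref{def:as:f}. Fatou transfers the Marcinkiewicz bounds to $u$ and $\Phi(\nabla u)$, yielding~\eqref{umarc}. For uniqueness, I would take two approximable solutions $u,v$ arising from data sequences $\{f_k\},\{g_k\}$ converging to $f$ in $L^1(\Omega)$, test the difference of the corresponding equations with $T_\delta(u_k - v_k)$, let $k \to \infty$, and then $\delta \to 0$; the strict monotonicity~\eqref{A3} forces $\nabla u = \nabla v$ a.e.~in $\Omega$, and hence $u = v$.
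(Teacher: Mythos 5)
Much of your outline coincides with the paper's proof (the truncation energy estimate, the level-set bounds that give \eqref{umarc}, the extraction of an a.e.\ limit $u$), and your Boccardo--Murat route to a.e.\ convergence of gradients is a legitimate alternative to the paper's device: the paper instead shows that $\{\nabla u_k\}$ is Cauchy in measure by testing the \emph{difference} of two approximating equations with $T_s(u_k-u_m)$ and using the function $\psi(x)=\inf_{(\xi,\eta)\in S}\big(a(x,\xi)-a(x,\eta)\big)\cdot(\xi-\eta)$ on a compact set $S$, together with the extra a priori bound of Proposition~\ref{prop:aniso-est}; this avoids any smooth approximation of $T_h(u)$ and any passage to the limit in mixed products $\int_\Omega a(x,\nabla u_k)\cdot\nabla\psi_j\,dx$. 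Within your scheme, however, there is a genuine error: the claimed equi-integrability of the \emph{untruncated} family $\{a(x,\nabla u_k)\}$ and the ensuing weak $L^1$ convergence and Vitali upgrade do not follow from \eqref{A2''}. That assumption only bounds $\int_\Omega\widetilde\Phi\big(c_\Phi a(x,\nabla u_k)\big)\,dx$ by $\int_\Omega\Phi(\nabla u_k)\,dx+\|h\|_{L^1(\Omega)}$, and for $L^1$ data only the truncated energies $\int_\Omega\Phi(\nabla T_t(u_k))\,dx\le 2t\,\|f\|_{L^1(\Omega)}$ are uniformly bounded, not $\int_\Omega\Phi(\nabla u_k)\,dx$. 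Luckily Definition~\ref{def:as:f} does not require $u$ to satisfy a limiting equation, so this material is dispensable; but inside your monotonicity argument you must work with the truncated objects (on $\{|u_k-\psi_j|<\delta\}$ one has $|u_k|\le\|\psi_j\|_{L^\infty(\Omega)}+\delta$, hence $a(x,\nabla u_k)=a(x,\nabla T_M(u_k))$ there, a family bounded in $L^{\widetilde\Phi}(\Omega;\rn)$ to which Theorem~\ref{theo:Banach-Alaoglu} applies), otherwise the cross terms cannot be passed to the limit.

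The second genuine gap is in uniqueness: the final implication ``$\nabla u=\nabla v$ a.e., hence $u=v$'' is not automatic, and it is precisely where the paper's Step~6 does its real work. The functions $u,v$ belong only to ${\mathcal T}^{1,\Phi}_0(\Omega)$, their gradients are the surrogate ones of \eqref{gengrad}, and $u-v$ need not have truncations in $W^1_0L^\Phi(\Omega)$, so equality of gradients cannot simply be integrated. The paper closes this by applying the anisotropic Poincar\'e inequality \eqref{anisopoinc} to $T_\tau\big(u-T_t(v)\big)$ and showing that the resulting right-hand side vanishes as $t\to\infty$, thanks to the decay estimate $\int_{\{t<|u|<t+\tau\}}\Phi(\nabla u)\,dx\le\tau\int_{\{|u|>t\}}|f|\,dx$, obtained by testing with $T_\tau\big(u_k-T_t(u_k)\big)$ and Fatou's lemma. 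Without an argument of this type your uniqueness proof is incomplete. A smaller omission in the same spirit: you never verify that the limit $u$ lies in ${\mathcal T}^{1,\Phi}_0(\Omega)$ and that the a.e.\ limit of $\nabla u_k$ coincides with the surrogate gradient $Z_u$; this follows from the uniform modular bounds on $\nabla T_t(u_k)$ and weak-$*$ compactness in $L^\Phi(\Omega;\rn)$, as in Step~5 of the paper, and needs to be stated.
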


\begin{remark}\label{remapprox} {\rm
Theorem \ref{theo:main-f}  is relevant, and therefore stated, only under assumption \eqref{intdiv}. Actually, if~$\Phi_\circ$ grows so fast near infinity that \eqref{intdiv}  is violated, and hence  \eqref{intconv}
is satisfied, then a weak solution certainly exists by Theorem \ref{theo:boundex}, and, by their uniqueness, it agrees with the  approximable one.}
\end{remark}

%

We conclude this section by considering
the still more general situation when the function $f$ in~problem~\eqref{eq:main:f} is replaced by a signed Radon  measure
$\mu$ with finite total variation $\|\mu\|(\om)$.
Approximable solutions to the corresponding Dirichlet problem
\begin{equation}\label{eqmeas}
\begin{cases}
- {\rm div}  \, a(x, \nabla u)  = \mu  &\quad {\rm in}\quad \om \\
 u =0  &\quad {\rm on}\quad
\partial \om \,
\end{cases}
\end{equation}
can be defined in analogy with Definition \ref{def:as:f}, provided that
convergence of the approximating sequence $\{f_k\}$ to $f$ in $L^1(\om)$ is replaced  by   weak-$\ast$ convergence in the space of
measures.  Recall that a sequence of functions $\{f_k\} \subset L^1(\om)$ is said to weak-$\ast$ converge to $\mu$ in the space of measures if
\begin{equation}\label{meas1}
\lim _{k \to \infty} \int _\om \varphi f_k \, dx = \int _\om \varphi
\,d\mu
\end{equation}
for every function $\varphi \in C _0(\om)$. Here, $C _0(\om)$ denotes the
space of continuous functions with compact support in $\om$.

\begin{definition}\label{def:as:mu}  {\rm {\bf [Approximable solution with measure data]}}  Let $\mu$ be a signed Radon  measure with finite total variation on $\om$.   Under assumptions \eqref{A3}--\eqref{A2''},
 a~function $u\in {\mathcal T}^{1,\Phi}_0(\Omega)$ is called an~approximable solution to problem \eqref{eqmeas} if there exists a sequence
 $\{f_k \}\subset L^\infty (\Omega)$  weakly-$*$ converging to $\mu$ in the space of measures,  such that the sequence of~weak solutions $\{u_k\}\subset W^{1}_0\mathcal L^\Phi
(\Omega)$ to problems~\eqref{prob:trunc} satisfies $$ u_k\to u  \quad \hbox{
a.e. in  $\Omega$.}$$
\end{definition}

Apart from uniqueness, an   analogue to Theorem \ref{theo:main-f} for approximable solutions $u$ with measure data can be established via essentially the same proof. In particular,  a.e. convergence of gradients, and hence of~the nonlinear  coefficient of the differential operator, as well as regularity of $u$ and $\nabla u$ hold exactly as in the case of data in $L^1(\om)$.

%
%

\begin{theorem}\label{theo:main-mu}  {\rm {\bf [Existence and regularity with measure data]}}  Let $\Omega$ be a bounded Lipschitz domain in $\rn$  and   let  $\mu$ be a signed Radon  measure with finite total variation on $\om$.    Assume that conditions \eqref{A3}--\eqref{A2''}  are in force.
 Then, there exists an approximable solution $u\in {\mathcal T}^{1,\Phi}_0(\Omega)$ to the Dirichlet problem~\eqref{eqmeas}.    If $\{u_k\}$ is the sequence in the~definition of~approximable solution then $\nabla u_k\to   \nabla u $ a.e. in $\Omega$.
Moreover, $u$ and $\nabla u$  fuflill property \eqref{umarc}.
\end{theorem}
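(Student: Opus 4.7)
The strategy is to imitate the proof of Theorem~\ref{theo:main-f}, with the only substantive changes arising from the replacement of $L^1$ convergence of the data by weak-$*$ convergence in the space of measures, and from the consequent loss of uniqueness. I first select $\{f_k\}\subset C_0^\infty(\Omega)\subset L^\infty(\Omega)$ obtained by mollifying $\mu$, so that~\eqref{meas1} holds for every $\varphi\in C_0(\Omega)$ and $\|f_k\|_{L^1(\Omega)}\leq \|\mu\|(\Omega)$ for every $k$. Since $f_k\in L^\infty(\Omega)\subset E[\widetilde{\Phi_\circ},n](\Omega)$ under~\eqref{intdiv}, and trivially under~\eqref{intconv}, Theorem~\ref{theo:boundex} supplies a unique weak solution $u_k\in W_0^1\mathcal L^{\Phi}(\Omega)$ to problem~\eqref{prob:trunc}.

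Next, I derive a priori estimates depending only on $\|\mu\|(\Omega)$. Testing~\eqref{prob:trunc} with the truncation $T_t(u_k)\in W_0^1L^\Phi(\Omega)\cap L^\infty(\Omega)$ (an admissible test function in Definition~\ref{weaksol}, after a modular smooth approximation via Proposition~\ref{prop:approx}) and exploiting the coercivity~\eqref{A2'} yields the truncation bound
\begin{equation*}
\int_\Omega \Phi(\nabla T_t(u_k))\, dx \le t\,\|\mu\|(\Omega)\qquad \text{for every } t>0.
\end{equation*}
Combining this with the anisotropic Sobolev inequality~\eqref{B-W} applied to $T_t(u_k)$, and controlling the super-level set $\{|u_k|>t\}$ by a Chebyshev-type argument, I obtain
\begin{equation*}
\sup_k \|u_k\|_{L^{\vt_n(\cdot),\infty}(\Omega)}+\sup_k \|\Phi(\nabla u_k)\|_{L^{\varrho_n(\cdot),\infty}(\Omega)}<\infty,
\end{equation*}
with $\vt_n$ and $\varrho_n$ as in~\eqref{vt1vt2}. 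Since $\{T_t(u_k)\}$ is bounded in $W_0^1L^\Phi(\Omega)\to W_0^{1,1}(\Omega)$ for each $t>0$, the classical Rellich theorem and a diagonal extraction supply a subsequence, still indexed by $k$, along which $u_k\to u$ a.e. in $\Omega$, and the limit inherits membership in $L^{\vt_n(\cdot),\infty}(\Omega)$.

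The heart of the proof, and the principal obstacle, is showing that $\nabla u_k\to \nabla u$ a.e., with $\nabla u$ understood in the sense of~\eqref{gengrad}. I adapt the Boccardo--Murat truncation argument to the anisotropic Orlicz setting: testing the difference of the equations for $u_k$ and $u_j$ against $T_s(u_k-T_t(u_j))$ for small $s>0$, invoking the monotonicity~\eqref{A3} and the uniform bound $\|f_k\|_{L^1(\Omega)}\leq \|\mu\|(\Omega)$, one shows that $\{\nabla T_t(u_k)\}$ is Cauchy in measure for every $t>0$, whence $\nabla u_k\to\nabla u$ a.e. Continuity of $a(x,\cdot)$ transfers this to $a(x,\nabla u_k)\to a(x,\nabla u)$ a.e., and the growth condition~\eqref{A2''} together with the Marcinkiewicz bound on $\{\Phi(\nabla u_k)\}$ and Theorems~\ref{theo:delaVP}--\ref{theo:dunf-pet} promotes this to $L^1(\Omega;\rn)$ convergence. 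Passage to the limit in the weak formulation of~\eqref{prob:trunc} against any $\varphi\in C_0^\infty(\Omega)$ follows, the right-hand side converging by the weak-$*$ convergence of $\{f_k\}$; this identifies $u$ as the sought approximable solution, and the regularity in~\eqref{umarc} is inherited from the uniform bounds.

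The main difficulty lies in the truncation step in the fully anisotropic, non-reflexive framework: the absence of the $\Delta_2$ and $\nabla_2$ conditions precludes norm density of $C_0^\infty(\Omega)$ in $W_0^1L^\Phi(\Omega)$, so all algebraic manipulations with test functions must be carried out at the modular level, relying on Proposition~\ref{prop:approx} and the weak pairing of Proposition~\ref{prop:conv:mod-weak}. Uniqueness of $u$ cannot be asserted, since two different mollifications of $\mu$ may in principle generate distinct limits; this is known to be a subtle open issue already for the $p$-Laplacian, which is precisely why the statement of Theorem~\ref{theo:main-mu} omits the uniqueness clause present in Theorem~\ref{theo:main-f}.
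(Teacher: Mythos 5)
Your overall scheme is the paper's: mollify $\mu$ into $\{f_k\}\subset L^\infty(\om)$ with $\|f_k\|_{L^1(\om)}$ controlled by $\|\mu\|(\om)$, solve \eqref{prob:trunc} via Theorem~\ref{theo:boundex}, derive the truncation estimate $\int_\om\Phi(\nabla T_t(u_k))\,dx\le Ct\|\mu\|(\om)$, extract a.e.\ convergence of $u_k$ and then of $\nabla u_k$ by the monotonicity/Cauchy-in-measure argument, and read off \eqref{umarc}; the paper indeed proves Theorem~\ref{theo:main-mu} by repeating Steps 1--5 and 7 of the proof of Theorem~\ref{theo:main-f} with $\|f\|_{L^1(\om)}$ replaced by $\|\mu\|(\om)$, and you correctly identify why Step~6 (uniqueness) is lost: the right-hand side of the analogue of \eqref{diff:u-bu} need not vanish. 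One genuine (minor) difference: in the gradient step you control the sets where $|\nabla u_k|$ is large through the uniform Marcinkiewicz bound coming from Propositions~\ref{prop:pre-est}--\ref{prop:grad-est}, whereas the paper uses the $\Theta$-estimate of Proposition~\ref{prop:aniso-est}, $\int_\om\Theta(\nabla u_k)\,dx\le c|\om|^{1/n}\|f_k\|_{L^1(\om)}$ with $\Theta$ as in \eqref{andrea3}, to bound $|\{\Theta_-(|\nabla u_k|)>\tau\}|$. Both give the required uniform smallness; the paper's route has the advantage of not invoking $\Phi_n$ (hence not \eqref{intdiv}) at that point, while yours leans on the level-set estimates that you need anyway for \eqref{umarc}.

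There is, however, one step in your write-up that is incorrect as justified: the claim that \eqref{A2''}, the Marcinkiewicz bound on $\{\Phi(\nabla u_k)\}$, and Theorems~\ref{theo:delaVP}--\ref{theo:dunf-pet} upgrade $a(x,\nabla u_k)\to a(x,\nabla u)$ to convergence in $L^1(\om;\rn)$. De la Vall\'ee Poussin would require $\sup_k\int_\om\widetilde\Phi\big(c_\Phi a(x,\nabla u_k)\big)\,dx<\infty$, which via \eqref{A2''} needs $\sup_k\int_\om\Phi(\nabla u_k)\,dx<\infty$; with measure (or $L^1$) data this fails in general -- only the truncated energies are bounded, and the bound $|\{\Phi(\nabla u_k)>s\}|\lesssim \Phi_n^{-1}(s)^{n'}/s$ does not imply a uniform $L^1$ bound on $\Phi(\nabla u_k)$ (for $\Phi(\xi)=|\xi|^p$, $p<n$, the exponent in $\varrho_n$ is $\tfrac{n(p-1)}{(n-1)p}<1$). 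Equi-integrability of $a(x,\nabla u_k)$ itself would require a separate argument (in the power case via $|\nabla u_k|^{p-1}\in L^{n/(n-1),\infty}$, in general via the $\Theta$-estimate), not the route you indicate. Fortunately this step is dispensable: Definition~\ref{def:as:mu} only asks that $u\in{\mathcal T}^{1,\Phi}_0(\om)$ be the a.e.\ limit of weak solutions to \eqref{prob:trunc} with $f_k\xrightharpoonup{*}\mu$, so no passage to the limit in the weak formulation is needed to ``identify $u$ as the sought approximable solution''; your earlier steps (a.e.\ convergence of $u_k$ and $\nabla u_k$, membership of the truncations $T_t(u)$ in $W^1_0L^\Phi(\om)$, and the level-set estimates \eqref{podpoziomice}--\eqref{grad-u-inf} passed to the limit by Fatou) already prove everything the theorem asserts.
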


\section{Special instances}\label{sec:instances}

In  this section we implement the results stated above in cases when the $N$-function $\Phi$ takes one of~the forms given by \eqref{Phi=p}--\eqref{trud1}. Model equations whose nonlinearities are  driven by these specific functions $\Phi$ are also exhibited.
\par In what follows, the relation $\phi_1 \approx \phi_2$ between two  functions $\phi_i :  I \to [0, \infty]$, $i=1,2$, where $I$ is either $\rn$ or $[0, \infty)$, means that there exist positive constants $c_1$ and $c_2$ such that $\phi_1(c_1 x) \leq \phi_2(x) \leq \phi_1(c_2x)$ for every $x \in I$. If these inequalities hold for $|x|$ larger than some positive constant $M$, we shall write that $\phi_1 \approx \phi_2$ {\it near infinity}.

\begin{ex}\rm\label{ex-plap} A prototypical  equation with a power growth in the gradient is the $p$-Laplace equation. In~a~slightly generalized form, involving a non-necessarily  smooth coefficient, the corresponding Dirichlet problem reads
\begin{equation}\label{eq:p}
\begin{cases}
-\dv \, (b(x)|\nabla u|^{p-2}\nabla u)= f &\quad \hbox{in\ \ $\Omega$}\\
u=0 &\quad \hbox{on\ \ $\partial\Omega$\,,}
\end{cases}
\end{equation}
where  $1<p<\infty$ and   $b \in L^{\infty}(\Omega)$ is   such that $b(x)\geq c$ for some positive constant $c$. Without loss of~generality, here, and in similar circumstances in the following examples, we assume for simplicity that $c=1$. Plainly, assumptions  \eqref{A2'} and \eqref{A2''} are now fulfilled with $\Phi$ obeying \eqref{Phi=p}, namely $\Phi(\xi)=|\xi|^p$.
Note that, with this choice of $\Phi$,   assumption \eqref{A2''} agrees with the classical growth condition
$$|a(x,\xi)|\leq c\big(|\xi|^{p-1} + g(x)\big) \quad \quad \hbox{for a.e. $x \in \om$ and every $\xi \in \rn$,}$$
for some function $g \in L^{p'}(\Omega)$ and some constant $c>0$.
Existence and regularity of weak and approximable solutions to problem \eqref{eq:p} are discussed below in items {\it A)} and {\it B)}, respectively.
\begin{itemize}
\item[{\it A)}] Theorem \ref{theo:boundex} implies  that problem \eqref{eq:p} has a unique {weak solution} $u$ in each of the following cases:
\begin{eqnarray}
\label{hpp1}
1<p< n&  \ \
\text{ and } \ \   &f \in L^{[\frac{np}{np+p-n}, p']}(\om),
\\
\label{hpp2}
p= n& \ \
\text{ and } \ \  &  {f \in L^{[1, n']}(\om)}\,,\\
 \label{hpp3}
p>n
& \ \
\text{ and } \ \   &f \in L^1(\om)\,.
\end{eqnarray}
Case \eqref{hpp1}   extends a standard result on the existence of weak solutions under the assumption that $f\in L^{\frac{np}{np+p-n}}(\om)$, since the latter space is strictly contained in $L^{[\frac{np}{np+p-n}, p']}(\om)$. As far as we know, the result in the borderline situation \eqref{hpp2} is new. The conclusion under \eqref{hpp3} is classical.
\item[{\it B)}] Assume now that $f \in L^1(\om)$ and $1<p\leq n$.  Theorem~\ref{theo:main-f} yields the existence and uniqueness of an {approximable solution} $u$ to problem \eqref{eq:p}. The existence of such a solution is guaranteed by Theorem~\ref{theo:main-mu} even if  $f$ is replaced   by a signed measure $\mu$ with finite total variation on $\om$.  In both cases, if $1<p<n$, then
\begin{equation}\label{hpp4}
u \in L^{\frac{n(p-1)}{n-p}, \infty}(\om) \qquad \hbox{and} \qquad |\nabla u| \in L^{\frac{n(p-1)}{n-1}, \infty}(\om) \,.
\end{equation}
In the limiting case when $p=n$, the  approximable solution in question   fulfills
\begin{equation}\label{hpp5}
u \in \exp L(\om) \qquad \hbox{and} \qquad |\nabla u| \in L^{\varrho(\cdot),\infty}(\om) \,,
\end{equation}
where $\varrho(t) \approx\frac{t^n}{\log t}$ near infinity. Property  \eqref{hpp4} is nowadays classical --
see \cite{BBGGPV}. Equation~\eqref{hpp5} is a special case of~\cite[Example~3.4]{CiMa}. In  \cite{DHM} it is shown that, indeed,   $|\nabla u| \in L^{n, \infty}(\om)$ when $p=n$. This stronger piece of information is derived via ad hoc sophisticated techniques, exploiting the fact that the differential operator has exactly an $n$-growth.
\end{itemize}\end{ex}

\begin{ex}\rm\label{ex-iso-Zyg} Consider next the   case when  problem \eqref{eq:main:f} has still an isotropic growth, but not necessarily of power type. A model with this regard is provided by the problem
\begin{equation}\label{eq:A}
\begin{cases}
-\dv \, \bigg(b(x)\displaystyle\frac{A(|\nabla u|)}{|\nabla u|^2}\nabla u\bigg)= f &\qquad \hbox{in $\Omega$}\\
u=0 &\qquad \hbox{on $\partial\Omega$\,,}
\end{cases}
\end{equation}
where $A$ is an $N$-function and  $b \in L^{\infty}(\Omega)$ is   such that $b(x)\geq 1$. Clearly, problem~\eqref{eq:A} reduces to~\eqref{eq:p} when $A(t)=t^p$ for some $p>1$.
Assumption \eqref{A2'} and \eqref{A2''} are satisfied with $\Phi$ given by \eqref{Phi=A}, i.e.~$\Phi (\xi) =A(|\xi|)$ for $\xi \in \rn$.
 In particular, owing to the first inequality in \eqref{AAtilde'}, assumption \eqref{A2''} is equivalent to
 $$|a(x,\xi)|\leq c\big(A(|\xi|)/|\xi| + g(x)\big)  \quad \quad \hbox{for a.e. $x \in \om$ and every $\xi\in \rn$,}$$
for some function $g \in L^{\wt{A}}(\Omega)$ and some constant $c$, which  agrees with  a growth condition typically imposed under the $\Delta _2$-condition on $A$.
Of course, here the expression $A(|\xi|)/|\xi| $ has to be understood as $0$ if $\xi=0$. Since $\Phi_\circ (t) = A(t)$ in the situation at hand, our conclusions about weak solutions and approximable solutions to problem \eqref{eq:A} can be derived from Theorems \ref{theo:boundex} and  \ref{theo:main-f}   just on replacing $\Phi_\circ$ by $A$ in all relevant occurrences.
\par
 For instance, consider the case when
\begin{equation}\label{powerlog}
A(t) \approx t^p (\log t)^\alpha \qquad {\hbox{near infinity,}}
\end{equation}
where either $p >1$ and $\alpha \in \mathbb R$, or $p=1$ and $\alpha >0$.
\\
The  conclusions described below can be derived from our general results.
Equation \eqref{lorentzzug} is also exploited for such a derivation. In what follows, $E[\exp L^{\frac 1\alpha}, n](\Omega)$ denotes the space defined as in \eqref{E}, with $A(t) \approx e^{t^{1/\alpha}}$ near infinity.
\begin{itemize}
\item[{\it A)}] Theorem \ref{theo:boundex} tells us  that problem \eqref{eq:A}   admits a unique {weak solution}
 $u$ under any of the following assumptions:
\begin{eqnarray}
\label{hplog1}
\hbox{$p= 1$  and $\alpha >0$},\ &
 \text{and }  \ &
  f \in E[\exp L^{\frac 1\alpha}, n](\Omega),
\\\label{hplog2}
\begin{cases}\hbox{either $1< p< n$, $\alpha \in \mathbb R$}\\ \hbox{or $p=n$, $\alpha \leq n-1$},\end{cases}  \ &
 \text{and }  \ & f \in L^{[\frac{np}{np+p-n}, p']}(\log L)^{-\frac \alpha{p-1}}(\om),
\\\label{hplog3}
\begin{cases}\hbox{either $p>n$},\\
\hbox{or $p=n$ and $\alpha > n-1$,}\end{cases}\ &
 \text{and }  \ & f \in L^1(\om)\,.
\end{eqnarray}

\item[{\it B)}]  If $f \in L^1(\om)$, then Theorem~\ref{theo:main-f} provides us with the  existence and uniqueness of an {approximable solution} $u$ to problem   \eqref{eq:A}. When $f$ is replaced by a signed measure $\mu$ with finite total variation, Theorem~\ref{theo:main-mu} applies to ensure the existence of a solution of the same kind. Moreover, in  both cases:
  \begin{itemize}
\item[{\it i)}] if $1 \leq p <n$,
then
\begin{eqnarray}\label{hplog4bis}
 u \in L^{\vartheta(\cdot),\infty} (\om) \quad & \text{and} & \quad \nabla u \in L^{\varrho (\cdot),\infty}(\om)\,,\\
\nonumber
\text{where }\quad {\vartheta} (t) \approx t^{\frac{n(p-1)}{n-p}}(\log t)^{\frac{n\alpha}{n-p}} \quad & \hbox{and} &\quad{{\varrho} (t) \approx  t^{\frac{n(p-1)}{n-1}}(\log t)^{\frac{n\alpha}{n-1}}}\quad \hbox{near infinity;}\end{eqnarray}
\item[{\it ii)}] if $p=n$ and $\alpha <n-1$, then
\begin{eqnarray}\label{hplog5}
 u \in \exp L^{\frac{n-1}{n-1-\alpha}} (\om) \quad & \hbox{and}& \quad \nabla u \in L^{\varrho (\cdot),\infty}(\om) \\\nonumber
& \text{where}& \quad   \varrho (t) \approx t^{n}(\log t)^{\frac{\alpha n }{n-1}-1} \hbox{ near infinity;}
\end{eqnarray}
\item[{\it iii)}] if $p=n$ and $\alpha =n-1$, then
\begin{eqnarray}\label{hplog6}
 u \in \exp \exp L (\om) \quad &\hbox{and}&\quad \nabla u \in L^{\varrho (\cdot),\infty}(\om) \\\nonumber
& \text{where}& \quad\varrho (t) \approx t^{n}(\log t)^{n-1}(\log\log t)^{-1} \quad \hbox{near infinity.}\end{eqnarray}
\end{itemize}
Properties \eqref{hplog4bis}, \eqref{hplog5} and \eqref{hplog6} were established in  \cite[Example 3.4]{CiMa}, except for the case when $p=1$ in  \eqref{hplog4bis}, which is new. This case involves an $N$-function $A$ that does not satisfy the $\nabla _2$-condition near infinity, a situation that is not contemplated in \cite{CiMa}.\end{itemize}
\end{ex}

\begin{ex}\rm\label{ex-aniso-plap}
Pattern anisotropic problems
have the form
\begin{equation}\label{eq:pi}
\begin{cases}
\displaystyle -\sum _{i=1}^n \, \big(b_i(x)|u_{x_i}|^{p_i-2}u_{x_i}\big)_{x_i}= f &\qquad \hbox{in $\Omega$}\\
u=0 &\qquad \hbox{on $\partial\Omega$\,,}
\end{cases}
\end{equation}
where $u_{x_i}$ denotes partial derivative with respect to the variable $x_i$,  the functions  $b_i \in L^{\infty}(\Omega)$  are   such that $b_i(x)\geq 1$, and $p_i>1$ for $i=1, \dots , n$. Here, assumptions \eqref{A2'} and \eqref{A2''} are fulfilled with $\Phi$ as in
 \eqref{Phi=pi}, namely
$\Phi (\xi) = \sum _{i=1}^n |\xi_i|^{p_i}$ for $\xi \in \rn$.
 One has that
\begin{equation}\label{pbar}
\Phi_\circ (t) \approx t^{\overline p} \qquad \hbox{for $t\geq 0$,}
\end{equation}
where $\overline p$ denotes the harmonic mean of the exponents $p_i$. Namely,
\begin{equation}\label{harmonic}
\overline p = \frac 1{\frac 1n\sum _{i=1}^n \frac 1{p_i}}\,.
\end{equation}
Equation \eqref{pbar} is a special case of \eqref{Abar} below.
\\
Our results with regard to problem \eqref{eq:pi}  can be described as follows.
\begin{itemize}
\item[{\it A)}] Owing to Theorem \ref{theo:boundex}, a unique {weak solution} to problem \eqref{eq:pi} exists under  the same conditions as in \eqref{hpp1}--\eqref{hpp3}, with $p$ replaced with $\overline p$.
\item[{\it B)}]  When $f \in L^1(\om)$ and $1<\overline{p}\leq n$, Theorem~\ref{theo:main-f} yields the existence and uniqueness \mbox{of an }ap\-proximable solution $u$ to problem \eqref{eq:pi}. An  {approximable solution}   also  exists, owing to Theorem~\ref{theo:main-mu}, if a signed measure $\mu$ with finite total variation replaces  $f$ in problem  \eqref{eq:pi}.  Moreover, if  $1<\overline{p}<n$, then
\begin{equation}\label{hppi1}
u \in L^{\frac{n(\overline p-1)}{n-\overline p}, \infty}(\om) \qquad \hbox{and} \qquad u_{x_i} \in L^{\frac{p_in(\overline p-1)}{(n-1)\overline p}, \infty}(\om) \quad \hbox{for $i=1, \dots , n$}\,,
\end{equation}
whereas, if  $\overline p=n$, then
\begin{equation}\label{hppi2}
u \in \exp L(\om) \qquad \hbox{and} \qquad u_{x_i} \in  L^{\varrho_i (\cdot),\infty}(\om) \,,\quad\text{where }\quad \varrho_i (t) \approx\frac{t^{p_i}}{\log t}\ \text{near infinity}.
\end{equation}
Property \eqref{hppi1} extends and enhances a result of \cite{BGM}, proved only for $p_i \geq 2$, $i=1, \dots , n$, and yielding the weaker piece of information that $u_{x_i} \in L^q(\om)$ for every $q<\frac{p_in(\overline p-1)}{(n-1)\overline p}$.
\end{itemize}
\end{ex}

\begin{ex}\rm\label{ex-aniso-Zyg}
Problem \eqref{eq:pi} is a distinguished  member  of a more general class of problems taking the form
\begin{equation}\label{eq:Ai}
\begin{cases}
\displaystyle -\sum _{i=1}^n \, \bigg(b_i(x)\frac{A_i(|u_{x_i}|)}{|u_{x_i}|^2} u_{x_i}\bigg)_{x_i}= f &\qquad \hbox{in $\Omega$}\\
u=0 &\qquad \hbox{on $\partial\Omega$\,,}
\end{cases}
\end{equation}
where $A_i$ are  $N$-functions,  and   $b_i \in L^{\infty}(\Omega)$ are   such that $b_i(x)\geq 1$, for $i=1, \dots , n$. A choice of~the~function $\Phi$ that renders assumptions \eqref{A2'} and \eqref{A2''} true is now  \eqref{Phi=Ai}, i.e.
$
\Phi (\xi) = \sum _{i=1}^n A_i(|\xi_i|)$   for~$\xi \in \rn$. One can show that
$$\Phi _\circ (t) \approx \overline A (t) \qquad \hbox{near infinity,}$$
where $\overline A$ is the $N$-function obeying
\begin{equation}\label{Abar}
\overline{A} ^{\, -1}(\tau) = \bigg( \prod _{i=1}^n A_i^{-1}(\tau)\bigg)^{\frac 1n} \quad \hbox{ for $\tau \geq 0$,}
\end{equation}
see \cite[Equation 1.9]{Cfully}. Thus, our results about weak and approximable solutions to problem \eqref{eq:Ai} follow from Theorems \ref{theo:boundex}, \ref{theo:main-f}, and \ref{theo:main-mu}  on replacing $\Phi_\circ$ by $\overline A$ throughout.
\par
To give the flavor of the  conclusions  that can be derived from these theorems, let us test them on~the example given by choosing
\begin{equation}
\label{powerlogi}
A_i(t) \approx t^{p_i} (\log t)^{\alpha _i} \qquad \hbox{near infinity,}
\end{equation}
where either $p_i >1$ and $\alpha_i \in \mathbb R$, or $p_i=1$ and $\alpha_i >0$, for $i=1, 	\dots , n$.
Let $\overline p$
be given   by \eqref{harmonic}, and let $\overline \alpha$ be
defined as
$$\overline \alpha = \frac {\overline p}{n} \sum_{i=1}^n \frac {\alpha
_i}{p_i}.$$
One can verify via \eqref{Abar} that
$$\overline{A} (t) \approx
t^{\overline p} (\log  t)^{\overline \alpha}  \qquad \hbox{near infinity.}
$$
Then we have what follows.
\begin{itemize}
\item[{\it A)}] The existence and uniqueness of a {weak solution} to problem  \eqref{eq:Ai}, with $A_i$ given by \eqref{powerlogi}, depends on the exponents $p_i$ and $\alpha_i$ only through $\overline p$ and $\overline \alpha$, according to the same assumptions as in \eqref{hplog1}--\eqref{hplog3}, with $p$ and $\alpha$ replaced by $\overline p$ and $\overline \alpha$.

\item[{\it B)}] Theorem~\ref{theo:main-f} or Theorem~\ref{theo:main-mu} ensure that an {approximable solution} $u$ to  problem  \eqref{eq:Ai}, with $A_i$ given by \eqref{powerlogi}, exists whenever $f\in L^1(\om)$, or $f$ is replaced by a signed measure with finite total variantion, respectively. In the former case, the uniqueness of the solution is also assured.
In both cases:
\begin{itemize}
\item[{\it i)}]  if $1 \leq \overline p  <n$, then
\begin{align} \label{march1}
 u \in L^{\vartheta(\cdot),\infty} (\om) \quad & \text{and}  \quad  u_{x_i} \in L^{\varrho _i(\cdot),\infty}(\om)\quad \hbox{for $i=1, \dots , n$}\,, \\
\nonumber
\text{where }
{\vartheta} (t) \approx t^{\frac{n(\overline p-1)}{n-\overline p}}(\log t)^{\frac{n\overline \alpha}{n-\overline p}} \quad & \hbox{and}  \quad{ {\varrho}_i (t) \approx  t^{\frac{p_i n(\overline p-1)}{(n-1)\overline p }}(\log t)^{\frac{n(\alpha_i (\overline p -1) + \overline \alpha)}{(n-1)\overline p}}}\quad \hbox{near infinity;}
\end{align}

\item[{\it ii)}] if  $\overline p=n$ and $\overline \alpha < n-1$, then
\begin{equation}\label{hpplogi2}
 {u \in \exp L^{\frac{n-1}{n-1-\overline \alpha}} (\om) }\qquad \hbox{and} \qquad u_{x_i} \in L^{\varrho_i (\cdot),\infty}(\om)
\quad \hbox{for $i=1, \dots , n$}\,;
\end{equation}
where $\varrho_i (t) \approx t^{p_i} (\log t)^{\frac{\alpha _i(n-1) + \overline \alpha}{n-1}-1}$ near infinity.

\item[{\it iii)}] if $\overline p=n$ and $\overline \alpha =n-1$, then
\begin{equation}\label{hplogi3}
 u \in \exp \exp L (\om) \quad \hbox{and} \quad u_{x_i} \in L^{\varrho_i (\cdot),\infty} (\om)
\quad \hbox{for $i=1, \dots , n$}\,
\end{equation}
where
$ {\varrho_i (t) \approx t^{p_i}(\log t)^{\alpha _i-1}(\log\log t)^{-1}} \quad \hbox{near infinity.}$\end{itemize}
\end{itemize}
\end{ex}

\begin{ex}\rm\label{ex-aniso-Trud} Assume that $\Omega\subset\R^2$, and consider any Dirichlet problem
\begin{equation}\label{eq:T}
\begin{cases}
-\dv \, a(x,\nabla u)= f &\qquad \hbox{in $\Omega$}\\
u=0 &\qquad \hbox{on $\partial\Omega$\,}
\end{cases}
\end{equation}
under assumptions
\eqref{A3}--\eqref{A2''}, with    $\Phi$ given by~\eqref{trud}, namely
$
\Phi (\xi) = |\xi_1 -\xi_2|^p + |\xi_1|^q\log (c+ |\xi _1|)^\alpha$  for~$\xi \in \mathbb R^2$, with $p>1$ and either $q\geq 1$ and $\alpha >0$, or $q=1$ and $\alpha >0$. Let $\Phi_2$ be the function associated with this $\Phi$ as in \eqref{sobconj}, with $n=2$.
One has that 
\begin{itemize}
\item[{\it i)}]  {if  $pq<p+q$, then  $\Phi_2(t)\approx s^\frac{2pq}{p+q-pq}\log^\frac{p\alpha }{p+q-pq}(t) $ near infinity,}
\item[{\it ii)}]  if  $pq=p+q$ and $p\alpha<p+q$, then  $\Phi_2(t)\approx \exp\big(t^\frac{2(p+q)}{p+q-p\alpha}\big)$ near infinity,
\item[{\it iii)}] if  $pq=p\alpha=p+q$, then  $\Phi_2(t)\approx \exp(\exp(t^2))$ near infinity,
\item[{\it iv)}] if either $pq>p+q$, or $pq=p+q$ and $\alpha>q$,  then  condition \eqref{intconv} holds,
\end{itemize}
see \cite[Section 1]{Cfully}.
Thus the following conclusions   hold.
\begin{itemize}
\item[{\it A)}] Owing to Theorem \ref{theo:boundex},   problem \eqref{eq:T}   admits a unique {weak solution}
 $u$ under any of the following assumptions:
\begin{eqnarray}
\label{hpT1}
& \begin{cases}\hbox{either $pq<p+q$,}\\ \hbox{or $pq=p+q$ and  $\alpha \leq q$},\end{cases}
 \ &\text{and }  \qquad
 f \in L^{[\frac{2pq}{3pq-p-q}, \frac{2pq}{2pq-p-q}]}(\log L)^{-\frac {\alpha p}{2pq-p-q}}(\om),
\\\label{hpT2}
&\begin{cases}\hbox{either $pq>p+q$},\\ \hbox{or $pq=p+q$ and $\alpha > q$,}\end{cases}
\ & \text{and }  \qquad
f \in L^1(\om)\,.
\end{eqnarray}
\item[{\it B)}]  Problem \eqref{eq:T} has an {approximable solution} $u$ if  either $f \in L^1(\om)$, or $f$ is replaced by a~measure~$\mu$ with finite total variation.  In the former case, the solution is also unique. These assertions are consequences of Theorems~\ref{theo:main-f} and~\ref{theo:main-mu}.
Also,
\begin{itemize}
\item[{\it i)}] if  $pq<p+q$,  then
\begin{flalign}\label{hpT3}
 u \in L^{\vartheta(\cdot),\infty} (\om) \quad &  \ \ \, \text{where }\quad {\vartheta} (t) \approx t^{\frac{pq}{p+q-pq}-1}(\log t)^{\frac{\alpha p}{p+q-pq}}
\quad \hbox{near infinity},
\\
u_{x_1}  \in L^{\varrho_1(\cdot),\infty}(\om)\quad &\quad\text{ and}\qquad u_{x_1}-u_{x_2} \in L^{\varrho_2(\cdot),\infty}(\om)    
\\ \nonumber
\hbox{where} \,\,\,{\varrho}_1 (t) \approx  t^{q(2 -\frac{ 1}{p})-1}(\log t)^{\alpha (2 -\frac{ 1}{p})} &  \quad\text{ and}\qquad {\varrho}_2 (t) \approx  t^{q(2 -\frac{ 1}{p})-1}(\log t)^{\frac{ \alpha }{q}}\
\quad \hbox{near infinity;}
\end{flalign}
\item[{\it ii)}] if
 $pq=p+q$ and $\alpha < q$, then
\begin{flalign}\label{hpT4}
u \in \exp L^{\frac{q}{q-\alpha}}(\om)\,, \quad
u_{x_1}  \in L^{\varrho_1(\cdot),\infty}(\om)\,,& \quad  \text{and}\quad u_{x_1}-u_{x_2} \in L^{\varrho_2(\cdot),\infty}(\om) \,, \\
\nonumber
\text{where }\ {\varrho}_1 (t) \approx t^{q}(\log t)^{\frac \alpha q}  & \quad\text{and}\quad {\varrho}_2 (t) \approx   t^{p}(\log t)^{\frac{ \alpha }{q}-1}
\quad \hbox{near infinity;}
\end{flalign}
\item[{\it iii)}] if $pq=p+q$ and $\alpha = q$, then
\begin{flalign}\label{hplogi3bis}
\qquad\quad\quad u \in \exp \exp L (\om)\,, \quad
u_{x_1}  \in L^{\varrho_1(\cdot),\infty}(\om)\,, &\quad\text{and}\quad u_{x_1}-u_{x_2} \in L^{\varrho_2(\cdot),\infty}(\om) \,, \\
\nonumber
\text{with }\ {\varrho}_1 (t) \approx t^{q}(\log t)^{\alpha}(\log \log t)^{-1}   & \quad\text{and}\quad {\varrho}_2 (t) \approx    t^{p}(\log \log t)^{-1}
\quad \hbox{near infinity.}
\end{flalign}
\end{itemize}
\end{itemize}
\end{ex}

\begin{ex}\rm\label{ex-aniso-new} {Assume that $\Omega\subset\R^2$, and consider any Dirichlet problem as in \eqref{eq:T},
with   $\Phi$ now given by \eqref{trud1}, namely
$
\Phi (\xi) = |\xi_1 +3\xi_2|^p + e^{|2\xi_1-\xi_2|^\beta}-1$   for $\xi \in \mathbb R^2$,  where $p>1$ and $\beta>1$. An analogous argument as in \cite[Section 1]{Cfully} shows that $$\Phi_\circ (t) \approx t^{2p}\log^{-\frac p\beta}(1+t) \quad \hbox{near infinity.}$$
 Hence, condition \eqref{intconv} is in force. Theorem \ref{theo:main-f} then tells us that there exists a unique {weak solution}  to problem \eqref{eq:T} for every $f \in L^1(\om)$.}

\end{ex}

{
\section{Proofs of approximation theorems} \label{approxproof}

Here, we are concerned with  proofs of the results stated in Subsection \ref{subapprox}.

\begin{proof}[Proof of Proposition~\ref{prop:conv:mod-weak}]
By our assumption, there exists $\lambda_1>0$ such that $\smallint_\Omega
\Phi((U_k- U )/\lambda_1)\,dx\to 0$ as $k \to \infty$,
namely,   $\Phi((U_k- U )/\lambda_1)\to 0$
 in $L^1(\Omega)$. Hence, there exists a subsequence of $\{U_k\}$, still indexed
  by $k$, such that $U_k\to U$ a.e. in $\Omega$,
  and the sequence of functions
$\Phi((U_k- U )/\lambda_1)$ is pointwise bounded by a
function in $L^1(\Omega)$ independent of $k$. Given any function $V
\in L^{\wt{\Phi}}(\Omega; \rn)$, there exists $\lambda_2>0$ such that
$\wt{\Phi}(V/\lambda_2)\in L^1(\Omega)$. The definition of Young's conjugate implies that
$$
\frac{|V\cdot (U_k-U)|}{\lambda_1\lambda_2}  \leq \Phi\Big(\frac{U_k-
U}{\lambda_1}\Big)+\wt{\Phi}\Big(\frac{V}{\lambda_2}\Big)\qquad\hbox{a.e. in
$\Omega$}\,.
$$
Hence, equation \eqref{july37} follows, via the dominated
convergence theorem.
\end{proof}

\begin{proof}[Proof of Proposition~\ref{prop:modulardensity}] Fix any  $U\in L^\Phi(\Omega;\rn)$. Set, for ${\ell}\in\N$,
\[\Omega_\ell=\{x\in\Omega:\ |U(x)|\leq \ell\}.\]
By Tchebyshev inequality,
$|\Omega\setminus\Omega_\ell|\leq \|U\|_{L^1(\Omega;\rn)}/\ell$.
Next, define $U_\ell=U{\chi_{\Omega_\ell}}$, and notice that
$|U_\ell(x)|\leq|U(x)|$  and
$\Phi(U_\ell(x))\leq\Phi(U(x))$ for $x\in\Omega$.
Thus, if
$\lambda\geq \|U\|_{L^\Phi(\Omega;\rn)}/2$, then
 \begin{equation}
\label{approx1}
\lim _{\ell \to \infty}\int_\Omega\Phi\left(\frac{U_\ell-U}{2\lambda}\right)dx= \lim _{\ell\to \infty}
\int_{\Omega\setminus\Omega_\ell}\Phi\left(\frac{U}{2\lambda}\right)dx\
=0.\end{equation}
Let $\wt{U}_\ell$ denote the representative of the function $U_l$, which is defined everywhere in $\om$ as the limit of its averages on balls at each Lebesgue point, and by $0$ elsewhere.
%
Fix any $\ell,k\in\N$, and {{set}}
$Q=[-\ell,\ell]^n$. We split $Q$ into a family of $N(k)$ cubes ${Q_i^k}$
of diameter $\tfrac 1k$ defined as follows.
 Consider a dyadic  decomposition of $Q$, and distribute the boundaries of the
dyadic cubes $Q_i^k$ in such a way that they are pairwise disjoint,
and
$Q=\cup_{i=1}^{N(k)}Q_i^k$.  Define $y_i={\rm
argmin}\,\Phi|_{\overline{Q_i^k}}$ {for }$i=1,\dots,N(k)$. On setting
$E_i^k=\wt{U}_\ell^{-1}(Q_i^k)$, we have that
$\Omega=\cup_{i=1}^{N(k)}E_i^k$. Since $Q_i^k$ is a Borel set and
{$\wt{U}_\ell\in \mathcal M(\om; \rn)$}, the set $E_i^k$ is measurable.
Therefore,  the family $\{E_i^k:\,i=1,\dots,N(k)\}$ is a partition of
$\Omega$ into pairwise disjoint measurable sets. Next,   define the function ${U}_{l,k} : \om \to \rn$ as
\[ {U}_{l,k}=\sum_{i=1}^{N(k)}y_i\chi_{E_i^k}.\]
We have that $\lim _{k\to \infty}{U}_{\ell,k}(x)=\wt{U}_\ell(x)$ for every $x\in\Omega$. Indeed,
${U}_{\ell,k}(x)=y_i$ for every $x\in E_i^k$, whence  $|y_i-\wt{U}_\ell(x)|\leq{\rm
diam}\,{Q_i^k}\leq\tfrac 1k$ for every such $x$. As a consequence,
$\lim_{k\to \infty}{U}_{\ell,k}(x)={U}_\ell(x)$   for a.e.
$x\in\Omega$. On the other hand,
$\Phi({U}_{\ell,k}(x)/\lambda)=\Phi(y_i/\lambda)\leq
\Phi({U}_\ell(x)/\lambda)$ for  every $\ell , k \in \mathbb N$, and   $x\in E_i^k$. Hence, owing to Jensen's inequality,
$$\int_\Omega \Phi\left(\frac{{U}_{\ell,k}-{U}_\ell}{2\lambda}\right)dx\leq\frac{1}{2}\int_\Omega \Phi\left(\frac{{U}_{\ell,k}}{\lambda}\right)dx+\frac{1}{2}\int_\Omega \Phi\left(\frac{{U}_\ell}{\lambda}\right)dx\leq \int_\Omega \Phi\left(\frac{{U}_\ell}{\lambda}\right)dx$$
for  every $\ell , k \in \mathbb N$.
Therefore, thanks to the dominated convergence theorem,
\begin{equation}
\label{approx2}\lim _{k\to \infty} \int_\Omega
\Phi\left(\frac{{U}_{\ell,k}-{U}_\ell}{2\lambda}\right)dx=0
\end{equation}
for every $\ell \in \mathbb N$.
By the convexity of $\Phi$,
\begin{equation}\label{feb1}\int_\Omega \Phi\left(\frac{{U}_{\ell,k}-{U}}{4\lambda}\right)dx\leq\frac{1}{2}
\int_\Omega \Phi\left(\frac{{U}_{\ell,k}-{U}_\ell}{2\lambda}\right)dx+\frac{1}{2}\int_\Omega \Phi\left(\frac{{U}_\ell-{U}}{2\lambda}\right)dx
\end{equation}
for  every $\ell , k \in \mathbb N$.
Owing to equations~\eqref{approx1} and~\eqref{approx2}, the left-hand side of \eqref{feb1} tends to $0$ as $k\to \infty$.   A diagonal argument then
completes the proof.
\end{proof}

With Proposition \ref{prop:modulardensity} at our disposal, we are ready to prove Proposition \ref{prop:approx}. The proof to be presented is based on ideas of that
of~\cite[Theorem~2.2]{pgisazg1}.

\begin{proof}[Proof of Proposition \ref{prop:approx}]
Assume, for the time being, that $\Omega$ is starshaped with respect
to the ball $B_r(0)$, centered at $0$ and with radius $r$. This
means that $\om$ is starshaped with respect to every point in~$B_r(0)$. Let  $k\in\N$ be so large that  $\tfrac 1k \in (0, \tfrac r4)$, and  set $\gamma_k
=1-\tfrac{2}{rk}<1$. For any such $k$, we define
the~set
\begin{equation}
\label{july30} \Omega_k =\gamma_k \Omega + \tfrac 1k B_1(0).
\end{equation}
Our choice of $k$ and $\gamma _k$ ensures that $\Omega_k  \subset
\subset \Omega$.
Let $m \in \mathbb N$, and let $U \in\mathcal{M}(\rn; \R^m)$ be such that $U=0$ in $\rn \setminus \om$. Define
$U_k  : \om \to \mathbb R^m$ as
\begin{equation}
\label{xid}U_k(x) = \int_\rn \rho_k( x-y)U (y/\gamma_k)\,dy \qquad
\hbox{for $x \in \om$,}
\end{equation}
where $\rho_k(x)=\rho(kx)k^n$ is a standard smoothing kernel on
$\rn$,   i.e. $\rho $ is a nonnegative radially decreasing function,
$\rho\in C^\infty(\rn)$, $\mathrm{supp}\,\rho\subset\subset B_1(0)$ and
$\smallint _\rn \rho(x)dx = 1$. Since $U (y/\gamma_k) =0$ if $y\notin \gamma_k
\om$, 
one has that $U_k\in C_0^\infty(\om; \Rm)$.
Moreover, if $U \in L^\infty (\om; \Rm)$, then
\begin{equation}
\label{july43} \|U_k \|_{L^\infty(\om;\Rm)}\leq
\|U\|_{L^\infty(\om;\Rm)}.
\end{equation}
\\ We claim that, if  $m=n$, then
\begin{equation}
\label{unifMxid}\iO \Phi(U_k )\, dx\leq \iO \Phi(U)\, dx
\end{equation}
for $k$ as above. Indeed,
 \[
\begin{split}
\iO \Phi(U_k(x))\, dx &= \int _\rn  \   {\Phi\left(\int_\rn \rho_k(
x-y)U (y/\gamma_k)\,dy\right)}\, dx \leq
 \int_\rn \int_{\rn} \rho_k(x-y) {{\Phi}\left( U (y/\gamma_k )  \right)} \,dy\,dx\\
 & =
 \int_\rn {{\Phi}\left( U (y/\gamma_k )  \right)} \int_{\rn} \rho_k(x-y)  \,dx\,dy = \gamma_k^n \int_{\rn} {{\Phi}\left( U (z) \right)}
dz =  \gamma_k^n \int_{\om} {{\Phi}\left( U (z) \right)} dz\\
 &\leq \int_{\om} {{\Phi}\left( U (z) \right)} \,
dz\,,\end{split}
\]
where the first equality holds since  $U_k =0$ in $\rn \setminus
\om$ and $\Phi(0)=0$, the inequality follows from Jensen's
inequality, and the third equality is due to the fact that
$\smallint_{\rn} \rho_k(x-y)  \,dx=1$ for every $y \in \rn$.
\\
Assume now that $u\in W^{1}_0L^\Phi(\Omega)$. As observed above, the function $u_k$, defined as in
\eqref{xid}, belongs to~$C_0^\infty (\om)$. Moreover,
 since the continuation of $u$  to $\rn$ by $0$ outside $\om$ is weakly differentiable in $\rn$, the function $\om  \ni x \mapsto u(x/\gamma_k)$ is weakly differentiable in $\om$. Thus,
 \begin{equation}
 \label{july31}
 (\nabla u)_{k} = \nabla u_k \quad \hbox{in $\om$,}
 \end{equation}
where $(\nabla u)_{k}$ is defined as in \eqref{xid}, with $U=\nabla u$.
We shall show that there exists $\lambda >0$ such that
\begin{equation}
\label{july32} \lim _{k \to \infty}\int_\Omega
\Phi\left(\frac{\nabla u_k - \nabla u}{\lambda}\right) dx = 0\,.
\end{equation}
Owing to \eqref{july31}, equation \eqref{july32} will follow if we
prove that
\begin{equation}
\label{july33} \lim _{k \to \infty}\int_\Omega
\Phi\left(\frac{(\nabla u)_k - \nabla u}{\lambda}\right) dx =
0\qquad\text{ for some }\ \lambda >0.
\end{equation}
Fix any $\sigma>0$. By Propositions~\ref{prop:modulardensity}, there
exists a simple function
 $V : \om \to \rn$ such that
    \begin{equation}\label{IE:aw15}
    \int_\Omega \Phi\left( \frac{\nabla u -V }{\frac{1}{3}\lambda} \right) dx <
    \sigma.
    \end{equation}
    The convexity of $\Phi$ ensures that
    \begin{align}\label{IE:aw14}
    &\int_\Omega
     \Phi\left( \frac{ (\nabla u)_{k } - \nabla u }{ \lambda }\right) \,dx  = \int_\Omega
    \Phi\left( \frac{ (\nabla u)_{k } -V_{k }
    +  V_{k } -V + V - \nabla u}{ \lambda }\right) \,dx\\
    &\  \leq
    \frac{1}{3} \int_\Omega \Phi\left(
    \frac{ (\nabla u)_{k } -  V_k   }{ \frac{1}{3} \lambda } \right) \,dx
    + \frac{1}{3} \int_\Omega \Phi\left(
    \frac{  V_{k } -V }{
    \frac{1}{3} \lambda } \right) \,dx   + \frac{1}{3} \int_\Omega \Phi\left(  \frac{ V - \nabla u }{\frac{1}{3}\lambda} \right) \,dx .\nonumber
    \end{align}
     By \eqref{unifMxid} and \eqref{IE:aw15},
    \begin{equation}\label{IE:aw16}
    \int_\Omega \Phi\left(
    \frac{ (\nabla u)_k  - V_k   }{ \frac{1}{3} \lambda } \right) \,dx = \int_\Omega \Phi\left(
    \frac{ (\nabla u -V  )_k   }{ \frac{1}{3} \lambda } \right) \,dx <
    \sigma.
    \end{equation}
On the other hand, owing to Jensen's  inequality and  Fubini's
theorem
    \begin{align}\label{IE:aw17}
    \int_\Omega\Phi\bigg(
    \frac{  V_{k } -V}{
    \frac{1}{3} \lambda } \bigg) \,dx
    & = \int_\Omega \Phi\bigg( \frac{3}{ \lambda} \int_{B_1(0)} \rho (y) \big(V((x-  y/k)/\gamma_k)
  - V(x)\big)\,dy \bigg) \,dx
    \\ &
    \leq
    \int_{B_1(0)} \rho (y)  \int_\Omega \Phi\bigg( \frac{3}{\lambda}
\big(V((x- y/k)/\gamma_k)
  - V(x)\big)\bigg) \,dx   \,dy.\nonumber
    \end{align}
Therefore
$$\lim _{k \to \infty}  \Phi\bigg( \frac{3}{\lambda}
\big(V((x-y/k)/\gamma_k)
  - V(x)\big)\bigg) = 0 \quad \hbox{for {a.e.} $x \in \om$ and every $y \in B_1(0)$.}$$
Moreover,
$$  \Phi\bigg( \frac{3}{\lambda}
\big(V((x-y/k)/\gamma_k)
  - V(x)\big)\bigg)\leq C$$
for some constant $C$, and for every $x\in \om$, $y \in B_1(0)$ and
$k$ such that $\tfrac 1k \in (0, \tfrac  r4)$. Hence, by the~dominated convergence theorem,
$$\lim _{k \to \infty}
 \int_\Omega\Phi\bigg( \frac{3}{\lambda}
\big(V((x-y/k)/\gamma_k)
  - V(x)\big)\bigg)\,dx =0\qquad\text{for every }\ y \in B_1(0).$$
Furthermore,
$$
 \int_\Omega \Phi\bigg( \frac{3}{\lambda}
\big(V((x-y/k)/\gamma_k)
  - V(x)\big)\bigg) \,dx \leq C|\om|$$
for every  $y \in B_1(0)$ and every $k$ such that $\tfrac 1k \in (0,
\tfrac  r4)$.
Consequently, the rightmost side   of~\eqref{IE:aw17} converges
to zero as $k \to \infty$, thanks to the dominated convergence
theorem again, whence
\begin{equation}\label{july35}
\lim _{k \to \infty}  \int_\Omega\Phi\left(
    \frac{  V_{k} -V}{
    \frac{1}{3} \lambda } \right) \,dx  =0\,.
\end{equation}
Inequality \eqref{july33} follows from \eqref{IE:aw14},
\eqref{IE:aw15}, \eqref{IE:aw16} and \eqref{july35}, owing to the
arbitrariness of $\sigma$. This completes the proof in the case when $\om$ is  a starshaped
domain.
\\
Assume now that $\Omega$ is any bounded Lipschitz domain in~$\rn$.
Then, there exists a finite family of open sets $\omega_1, \dots
\omega _J$ and a corresponding family of balls $B_1, \dots B _J$,
with radii $r_1, \dots r_J$, such that $\Omega=\cup
_{k=1}^J\omega_j$, and every set $\omega_j$ is starshaped with
respect to the ball $B_j$. Let us  introduce a partition of unity
$\theta_j$ subordinated to the family $\{\omega _j\}$.
Any function $u \in W^1_0L^\Phi(\om)$ admits the decomposition
    \begin{equation}
    \label{july44}
    u(x) = \sum_{j=1}^J \theta_j (x) u(x) \quad \hbox{for $x \in \om$.}
    \end{equation}
Since $\nabla u \in L^\Phi(\Omega ; \rn)$ and $u \in L^\infty(\Omega)$,
one has that  $\nabla (\theta_j u) = (u \nabla \theta_j +
\theta_j\nabla u) \in L^\Phi(\Omega{;\rn})$. Therefore, $\theta _j u \in
W^1_0L^\Phi (\omega _j)$. Property \eqref{angela1grad} then follows
on applying to each function  $\theta_j u$ the result for  domains starshaped with respect to
balls.
\\ Inequality \eqref{july36} is a consequence of inequality \eqref{july43} and of the representation  formula \eqref{july44}\,.
\\ As far as property \eqref{sep30} is concerned, choose any $\lambda >0$ such that
\begin{align}\label{sep31}
\lim _{k \to \infty}\int _\om \Phi\bigg(\frac {\nabla u_k - \nabla
u}{\lambda}\bigg)\, dx =0\,.
\end{align}
By inequality \eqref{anisopoinc},
\begin{align}\label{sep32}
\int _\om \Phi_\circ \bigg(\frac {\kappa_1|u_k - u|}{|\om|^{\frac
1n}\lambda}\bigg)\, dx \leq  \int _\om \Phi\bigg(\frac {\nabla u_k -
\nabla u}{\lambda}\bigg)\, dx
\end{align}
for every $k\in \mathbb N$. From \eqref{sep31} and an application of Jensen's inequality to the integral on the left-hand side of inequality \eqref{sep32} we infer that $u_k \to u$ in $L^1(\om)$. Hence,  equation \eqref{sep30} follows, on  taking a~subsequence if necessary.
\end{proof}}


\section{Weak solutions: proof of Theorem~\ref{theo:boundex} }

The present section is split into subsections, corresponding to subsequent steps towards a proof of Theorem~\ref{theo:boundex}.
%
%
\subsection{Regularized problems}
We begin by constructing a sequence of   problems approximating \eqref{eq:main:f}, and whose principal part
satisfies isotropic ellipticity and growth conditions.
\\
Let $A: [0, \infty) \to [0,\infty)$
 be a strictly convex $N$-function such that $A\in C^1([0, \infty))$. In particular, $A'(0)=0$. Hence, the function
$$\rn \ni \xi \mapsto A(|\xi|) \in [0, \infty)$$
is a continuously differentiable radially increasing $n$-dimensional
$N$-function, whose gradient
agrees with
$A'(|\xi|)\frac{\xi}{|\xi|}$ for $\xi \in \rn$, with the convention   that the latter expression has to be interpreted as $0$ when $\xi=0$. The equality case in
Young's inequality yields
\begin{equation}\label{FYeq}
tA'(t) =    {A}( t)+\wt{A} (A'(t)) \quad \hbox{for $t \geq 0$.}
\end{equation}
Moreover, since $A$ is strictly convex,
\begin{equation}\label{msmon}
\bigg(A'(|\xi|)\frac{\xi}{|\xi|} -
A'(|\eta|)\frac{\eta}{|\eta|}\bigg)\cdot(\xi-\eta)>0\qquad \hbox{for
every $\xi \neq \eta$.}
\end{equation}
Given $\ep \in (0,1)$  we  define $a^\ep : \om \times \rn \to
\mathbb R$ by
\begin{equation}\label{Atheta-e}
a^\ep(x,\xi)=a(x,\xi)+ \ep A'(|\xi|)\frac{\xi}{|\xi|} \qquad
\hbox{for $x\in \om$ and $\xi \in \rn$, }
\end{equation}
and consider the problem
\begin{equation}\label{feb10}
\left\{\begin{array}{ll}
-\dv \, a^\ep(x,\nabla u^\ep) = f & \ \mathrm{ in}\  \Omega \\
u^\ep =0 &\ \mathrm{  on} \ \partial\Omega\,.
\end{array}\right.
\end{equation}
We shall show that  the function  $a^\ep(x, \cdot)$ satisfies isotropic ellipticity
and growth conditions,   that allow  to make use  of
an existence theory available in the literature. A priori estimates for $u^\ep$, independent of $\ep \in (0,1)$, will then be derived.
{
\begin{proposition}{\rm {\bf [Existence of solutions to regularized problems]}}\label{prop:reg-bound-e} Let $\Omega$ be a bounded Lipschitz domain in $\rn$. Assume that   $a:\Omega\times\rn\to\rn$  is a Carath\'eodory function satisfying assumptions~\eqref{A3}--\eqref{A2''} for some $n$-dimensional $N$-function $\Phi$.  Let $A(t)$ be any continuously differentiable strictly convex  $N$-function in $[0, \infty)$ that grows essentially faster than $t^q$ near infinity for some $q>n$, and such that
\begin{equation}\label{gamma>Phi}
A(|\xi|) \geq \Phi(\xi) \qquad \hbox{for $\xi \in \rn$.}
\end{equation}
Let $\ep \in (0,1)$ and let $a^\ep$ be defined  as in~\eqref{Atheta-e}. If $f \in L^1(\om)$,
then   there exists a  weak solution  $u^\ep \in   W_{0}^{1}\mathcal L^A(\Omega)\cap L^\infty(\om)$ to problem
\eqref{feb10}.
\end{proposition}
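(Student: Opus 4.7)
The plan is to solve \eqref{feb10} by a two-step approximation: first replace $f$ by a sequence $f_m\in L^\infty(\Omega)$ converging to $f$ in $L^1(\Omega)$, then build $u_m^\varepsilon$ via a Galerkin scheme, and finally pass to the limit $m\to\infty$ by a Minty-type monotonicity argument. Throughout, the hypothesis that $A$ grows essentially faster than $t^q$ for some $q>n$ is decisive: it forces $A_\circ=A$ (up to equivalence) to satisfy \eqref{intconv}, so \eqref{B-Wter} yields a Morrey-type embedding $W_0^1 L^A(\Omega)\hookrightarrow L^\infty(\Omega)$ with some constant $\kappa=\kappa(A,n,|\Omega|)$; and it furnishes a lower bound $A(t)\geq c_1 t^q-c_2$ which allows reflexive compactness arguments to be run in the auxiliary space $W_0^{1,q}(\Omega)$.

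\textbf{Structure of $a^\varepsilon$.} First I verify that $a^\varepsilon$ inherits strict monotonicity from \eqref{A3} combined with \eqref{msmon}; that the coercivity estimate
$$a^\varepsilon(x,\xi)\cdot\xi \;\geq\; \Phi(\xi)+\varepsilon A(|\xi|)$$
follows from \eqref{A2'} and the convexity inequality $tA'(t)\geq A(t)$; and that, writing $|a^\varepsilon(x,\xi)|\leq|a(x,\xi)|+\varepsilon A'(|\xi|)$ and using \eqref{A2''}, the elementary pointwise inclusion $\widetilde A(|\eta|)\leq\widetilde\Phi(\eta)$ (a direct consequence of $\Phi(\xi)\leq A(|\xi|)$), and the bound $\widetilde A(A'(t))\leq tA'(t)\leq A(2t)$ read off from \eqref{FYeq} and convexity, one arrives at a dual growth estimate
$$\widetilde A\bigl(c_{A,\varepsilon}\,|a^\varepsilon(x,\xi)|\bigr)\;\leq\; A(2|\xi|)+h(x)$$
for an appropriate positive constant $c_{A,\varepsilon}$.

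\textbf{Galerkin scheme for bounded data.} Fix $f_m=T_m(f)\in L^\infty(\Omega)$. Pick a countable basis $\{\phi_j\}\subset C_c^\infty(\Omega)$ and, for each $N$, seek $u_m^{\varepsilon,N}=\sum_{j=1}^N c_j^N\phi_j$ satisfying the projected equations $\iO a^\varepsilon(x,\nabla u_m^{\varepsilon,N})\cdot\nabla\phi_j\,dx=\iO f_m\phi_j\,dx$ for $j=1,\dots,N$; solvability of the nonlinear system in the coefficients $c_j^N$ follows from Brouwer's fixed point theorem via the coercivity estimate above. Testing with $u_m^{\varepsilon,N}$ itself and invoking $A(t)\geq c_1 t^q-c_2$ together with Poincar\'e's inequality furnishes uniform (in $N$) control of $\|u_m^{\varepsilon,N}\|_{W_0^{1,q}}$, and hence of $\|u_m^{\varepsilon,N}\|_{L^\infty}$ by Morrey's embedding. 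A Boccardo--Murat-style argument, based on Fatou's lemma and the strict monotonicity of $a^\varepsilon$, then upgrades weak convergence in $W_0^{1,q}(\Omega)$ to a.e.\ convergence of gradients; the dual growth estimate of Step~1, Theorem~\ref{theo:delaVP} and Theorem~\ref{theo:VitConv} subsequently identify the limit of $a^\varepsilon(x,\nabla u_m^{\varepsilon,N})$. This produces $u_m^\varepsilon\in W_0^1\mathcal L^A(\Omega)\cap L^\infty(\Omega)$ solving \eqref{feb10} with $f_m$ in place of $f$.

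\textbf{Passage $m\to\infty$ and main obstacle.} Testing the equation for $u_m^\varepsilon$ with itself and using the $L^\infty$-embedding above gives
$$\varepsilon\iO A(|\nabla u_m^\varepsilon|)\,dx \;\leq\; \kappa\|f\|_{L^1}\,\|\nabla u_m^\varepsilon\|_{L^A(\Omega)},$$
which, once more combined with $A(t)\geq c_1 t^q-c_2$ and $q>1$, yields uniform bounds on both $\int_\Omega A(|\nabla u_m^\varepsilon|)\,dx$ and $\|u_m^\varepsilon\|_{L^\infty}$. The same monotonicity scheme as in the Galerkin step then produces the desired $u^\varepsilon\in W_0^1\mathcal L^A(\Omega)\cap L^\infty(\Omega)$ with $\nabla u_m^\varepsilon\to\nabla u^\varepsilon$ a.e.\ and $a^\varepsilon(x,\nabla u_m^\varepsilon)\to a^\varepsilon(x,\nabla u^\varepsilon)$ in $L^1(\Omega;\rn)$; after passing to the limit in the weak formulation against $\varphi\in C_c^\infty(\Omega)$, the modular density of Proposition~\ref{prop:approx} extends the admissible test functions to all of $W_0^1\mathcal L^A(\Omega)\cap L^\infty(\Omega)$. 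The main obstacle is the potential non-reflexivity of $W_0^1 L^A(\Omega)$, which forbids any direct use of weak compactness there; the embedding $W_0^1 L^A\hookrightarrow L^\infty$ and the auxiliary reflexive space $W_0^{1,q}$ provided by the $q>n$ assumption are precisely what allow the Minty-type identification of the nonlinear limit to be carried through in the present setting, where $A$ need not satisfy $\Delta_2\cap\nabla_2$.
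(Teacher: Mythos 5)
Your preliminary analysis of $a^\ep$ (monotonicity, the coercivity $a^\ep(x,\xi)\cdot\xi\geq\Phi(\xi)+\ep A(|\xi|)+\ep\wt A(A'(|\xi|))$, a dual growth bound, and the embedding $W_0^1L^A(\om)\to L^\infty(\om)$ forced by $q>n$) matches the paper's. The genuine gap is in the existence step itself: you delegate it to a Galerkin scheme closed by a ``Minty-type'' or ``Boccardo--Murat-style'' argument, and in the present non-reflexive, non-separable setting (where $A\notin\Delta_2\cap\nabla_2$) neither device goes through as invoked. First, in a Galerkin scheme the only admissible test functions are elements of ${\rm span}\{\phi_1,\dots,\phi_N\}$; truncated differences such as $T_\delta(u^{\ep,N}_m-v)$ do not belong to this span, so the Boccardo--Murat route to a.e.\ convergence of gradients is unavailable, and it cannot be repaired by norm-approximation of the limit, since smooth functions are not dense in $W_0^1L^A(\om)$ in norm without $\Delta_2$. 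Second, the Minty identification requires passing to the limit in pairings such as $\int_\Omega a^\ep(x,V)\cdot\nabla u^{\ep,N}_m\,dx$ for $V\in L^\infty(\om;\rn)$, and in $\int_\Omega a^\ep(x,\nabla u^{\ep,N}_m)\cdot\nabla u\,dx$, and none of your convergences covers them: by \eqref{A2''} the field is controlled only modularly with $h\in L^1(\om)$, so $a^\ep(\cdot,V)$ lies in $L^{\wt A}(\om;\rn)$ but in general neither in $E^{\wt A}(\om;\rn)$ (so weak-$*$ convergence of $\nabla u^{\ep,N}_m$ in $L^A$ does not apply) nor in $L^{q'}(\om;\rn)$ (so weak convergence in $W_0^{1,q}$ does not apply); likewise $\nabla u$ need not lie in $E^A(\om;\rn)$. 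The auxiliary reflexive space $W_0^{1,q}(\om)$, which you present as the decisive tool, bounds the gradients but yields no $L^{q'}$-control of $a^\ep$, hence it does not resolve this duality obstruction. A further symptom: your dual growth estimate carries $A(2|\xi|)$ on its right-hand side, and without $\Delta_2$ the energy bound on $\int_\Omega A(|\nabla u^{\ep,N}_m|)\,dx$ does not control $\int_\Omega A(2|\nabla u^{\ep,N}_m|)\,dx$; the uniform modular bound on $\wt A\big(c\,|a^\ep(x,\nabla u^{\ep,N}_m)|\big)$ needed for Theorems~\ref{theo:delaVP} and~\ref{theo:Banach-Alaoglu} must instead be extracted from the inequality $2\wt A\big(\tfrac{c_\Phi}{2}|a^\ep(x,\xi)|\big)\leq a^\ep(x,\xi)\cdot\xi+h(x)$ combined with the energy identity.

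These are precisely the difficulties the paper's proof avoids: it verifies the structural hypotheses of Gossez's existence theory \cite{Gossez2} for $a^\ep$, places the datum in ${\mathcal W}^{-1}E^{\wt{A}}(\Omega)$ by solving $\dv\, F=f-f_\om$ with $F\in L^{q'}(\om;\rn)\subset E^{\wt A}(\om;\rn)$ via the Bogovskii operator, invokes that theory to obtain $u^\ep\in{\mathcal W}_0^1L^A(\Omega)$ with $a^\ep(x,\nabla u^\ep)\in L^{\wt A}(\om;\rn)$ and finite energy, and only afterwards uses $q>n$ for boundedness and Propositions~\ref{prop:approx} and~\ref{prop:conv:mod-weak} to enlarge the class of test functions (a step you do include). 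To make your self-contained construction rigorous you would in effect have to reprove Gossez's pseudomonotonicity and approximation machinery for the complementary system $(L^A,E^A;L^{\wt A},E^{\wt A})$; as written, the identification of the nonlinear limit --- the heart of the proposition --- is asserted rather than proved.
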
}

The following function spaces will  come into play in the proof of Proposition \ref{prop:reg-bound-e}.
 Let us denote by~$\mathcal W^1_0 L^A(\Omega)$  the closure of $C^\infty_0(\Omega)$ in
$W^1L^A(\Omega)$ with respect to the weak topology  $\sigma \big(L^A\times L^A, E^{\widetilde{A}}\times
E^{\widetilde{A}}\big)$.
One has that
\begin{equation}\label{A5}
\mathcal W^1_0 L^A(\Omega) \subset{W}^1_0 L^A(\Omega)\,,
\end{equation}
see \cite{Gossez}.
Moreover, we shall consider the space of distributions defined as
\begin{equation}\label{W-1E}
{\mathcal W}^{-1}E^{\wt{A}}(\Omega)=\bigg\{ f\in \mathcal{D}'(\Omega):
f=f_0-\sum_{i=1}^n \frac{\partial f_i}{\partial x_i} ,\; \; f_i\in
E^{\wt{A}}(\Omega),\, i=0, \dots n \bigg\}\,.
\end{equation}

\begin{proof}[Proof of Proposition \ref{prop:reg-bound-e}]
We begin by showing that, under condition \eqref{gamma>Phi}, the function $a^\ep$ fulfills the~assumptions required  in~\cite[Section~5]{Gossez2}.
 Besides being  a Carath\'eodory's function,
 those assumptions   on $a^\ep$ amount to a
 monotonicity condition that immediately follows from \eqref{msmon} and~\eqref{A3}, and to~an~estimate of the form
\begin{equation}\label{june1}
\left|a^\ep(x,\xi)\right|  \leq c \wt{A}^{-1}\left(c
{A}\left(\left|c\,\xi\right|\right)\right)+ c  \wt{A}^{-1}(c\, h(x))\qquad
\hbox{for  a.e. $x\in\Omega$ and for $\xi \in \rn$,}
\end{equation}
for some positive constant $c$.
To
verify inequality \eqref{june1}, observe that, by inequality \eqref{young},
\begin{equation} \label{*}a^\ep(x,\xi)\cdot \xi\leq  {A}\left(\left|\frac{2}{c_\Phi}\xi\right|\right)+\wt{A}\left(\left|\frac{c_\Phi}{2}a^\ep(x,\xi)\right|\right)
\end{equation}
for  a.e. $x\in\Omega$ and for $\xi \in \rn$. Inequality \eqref{gamma>Phi} implies that $\widetilde A(|\xi|) \leq \widetilde \Phi (\xi)$ for $\xi \in \rn$. Hence, via
inequalities~\eqref{A2'} and  \eqref{A2''},
\begin{align} \label{**}
 a^\ep(x,\xi)\cdot \xi & \geq \Phi \left( \xi\right)+   \ep  {A}\left(\left|\xi\right|\right)+\ep \wt{A} \left(A'(|\xi|) \right) \geq  \wt{\Phi}\left(c_\Phi a(x,\xi) \right) + \wt{A} \left(\ep A' (|\xi|)\right) -h(x)
 \\
 \nonumber  & \geq 2 \left(\frac{1}{2}\wt{A} \left(c_\Phi \left|a(x,\xi)\right|\right) +\frac{1}{2}\wt{A} \left(c_\Phi \ep A'(|\xi|) \right) \right) -h(x) \geq 2  \wt{A}\left(\frac{c_\Phi}{2} \left|a^\ep(x,\xi)\right| \right)-h(x)\,
 \end{align}
for  a.e. $x\in\Omega$ and for $\xi \in \rn$
Combining inequalities \eqref{*} and \eqref{**} tells us that
$$  \wt{A} \left(\frac{c_\Phi}{2} \left|a^\ep(x,\xi)\right| \right) \leq  {A}\left(\left|\frac{2}{c_\Phi}\xi\right|\right)+h(x) \,$$
for  a.e. $x\in\Omega$ and for $\xi \in \rn$.
Therefore, thanks to the monotonicity of the function $\wt{A}^{-1}$, we obtain that
\begin{align}
\left|a^\ep(x,\xi)\right| &\leq \frac{2}{c_\Phi}\wt{A}^{-1}\left({A}\left(\left|\frac{2}{c_\Phi}\xi\right|\right)+ h(x)\right)
\leq
\frac{2}{c_\Phi}\wt{A}^{-1}\left(2{A}\left(\left|\frac{2}{c_\Phi}\xi\right|\right)\right) + \frac{2}{c_\Phi}\wt{A}^{-1}\left( 2h(x)\right)
\end{align}
for  a.e. $x\in\Omega$ and for $\xi \in \rn$.
Hence, \eqref{june1} follows.
\\ {
Now, since $q>n$, we have that $q'<n'$.   Then there exists a function $F \in L^ {q'}(\om;\mathbb R^n)$, with $F=(F_1,\dots , F_n)$, such that
\begin{equation}\label{div1}
{\rm div} \,F = f - f_\om \qquad \hbox{in $\om$,}
\end{equation}
where $f_\om = \tfrac 1{|\om|}\smallint_\om f(x)\, dx$, the mean value of $f$ over $\om$.
This  follows, for instance, from the use of~the~Bogowskii operator, and the boundedness of the latter  from $L^1(\om)$ into $L^{q'}(\om)$ -- see \cite{Bogovskii}.
Inasmuch as $A(t)$ grows essentially faster than $t^q$ near infinity, the function  $t^{q'}$ grows essentially faster than $\widetilde A (t)$ near infinity.
Thus, $L^ {q'}(\om) \subset E^{\widetilde A}(\om )$, and hence $f$ is a distribution of the form
$f = f_\om-\sum_{i=1}^n \frac{\partial F_i}{\partial x_i}$ with $F_i \in  E^{\widetilde A}(\om )$ for $i=1, \dots, n$.
Therefore, $f \in \mathcal W^{{-1}}E^{\widetilde A}(\om)$.
As a consequence, the results in~\cite[Section~5]{Gossez2} ensure that   there exists    a function  $u^\ep \in {\mathcal
W}_{0}^{1}L^A(\Omega)$ such that
 $a^\ep(x,\nabla u^\ep) \in L^{\widetilde A}(\om)$} and
\begin{equation} \label{weak-reg-bound-ebis}
\int_\Omega a^\ep(x,\nabla u^\ep)\cdot \nabla \vp\, dx= \int_\Omega
f\,\vp\, dx
\end{equation}
for every $\vp\in {\mathcal W}_0^{1}L^{A}(\Omega)$. By \eqref{A5}, $u^\ep \in W_{0}^{1}L^A(\Omega)$.
Moreover, an inspection of the proof of~\cite[Sec\-tion~5]{Gossez2} reveals that $\smallint_{\om}A(\nabla u^\ep)\, dx < \infty$, whence $u^\ep \in W_{0}^{1}\mathcal L^A(\Omega)$.
Since   the function $A(t)$ grows faster than $t^q$ near infinity, one has that $W^1_0L^A(\om) \to W^{1,q}_0(\om) \to L^\infty (\om)$, and hence $u^\ep \in L^\infty(\Omega)$ as~well.
%
\\ It remains to show that equation \eqref{weak-reg-bound-ebis}  holds not only for $\vp\in {\mathcal W}_0^{1}L^{A}(\Omega)$, but also
for every $\vp \in W^1_0L^A(\om)$, a space containing
 $W^1_0\mathcal L^A(\om)$. Fix any such $\vp$ and observe that, by the embedding mentioned above, one has in fact that $\vp \in L^\infty (\om)$ as~well. An application of Proposition \ref{prop:approx}  (in the isotropic case)   ensures that there exists a sequence  $\{\vp_k\} \subset C^\infty_0(\om)$ such that $\vp _k \to \vp$ a.e. in $\om$, $\|\vp_k\|_{L^\infty(\om)} \leq C \|\vp\|_{L^\infty(\om)}$ for some constant $C$ and for every $k \in \mathbb N$, and $\nabla \vp_k \to \nabla \vp$ modularly in $L^A(\om)$. Since $a^\ep(x,\nabla u^\ep) \in L^{\widetilde A}(\om)$, by Proposition \ref{prop:conv:mod-weak} and the dominated convergence theorem one can pass to the limit in equation \eqref{weak-reg-bound-ebis} applied with $\vp$ replaced by $\vp_k$, and infer that equation \eqref{weak-reg-bound-ebis} holds for $\vp$ as~well. This fact amounts to saying that $u^\ep$ is actually a weak solution to problem \eqref{feb10}.
%
\end{proof}

A priori bounds for the solution $u^\ep$ to problem \eqref{feb10}, independent of $\ep \in (0, 1)$,  are established in~Proposition~\ref{prop:uniform1} below. They
are critical  in obtaining a weak solution to problem~\eqref{eq:main:f}  as the limit of $u^\ep$ as $\ep \to 0^+$.

\begin{proposition}{\rm {\bf [Uniform estimates in approximating problems]}}\label{prop:uniform1}
 Let $\om$, $a$, $\Phi$ and $A$ be as in Proposition \ref{prop:reg-bound-e}.
Suppose that   $f$ satisfies either of assumptions \eqref{hpf} and
\eqref{hpfbis}.
Given $\ep \in (0, 1)$, let  $u^\ep$ be a weak solution  to problem \eqref{feb10} exhibited  in Proposition \ref{prop:reg-bound-e}. Then:
\begin{itemize}
\item[(i)] the family $\{u^\ep\}$ is uniformly bounded in $W^1_0L^\Phi(\Omega)$,
\item[(ii)] the family $\{\ep \wt{A}(A'(|{\nabla} u^\ep|))\}$ is uniformly bounded in $L^1(\Omega)$,
\item[(iii)] the family $\{a(x,\nabla u^\ep)\}$ is uniformly bounded in $L^{\wt{\Phi}}(\Omega ;\rn)$.
\end{itemize}
\end{proposition}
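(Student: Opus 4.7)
The strategy is to test the weak formulation~\eqref{weak-reg-bound-ebis} of the regularized problem~\eqref{feb10} with $\varphi=u^\ep$ itself. Since $u^\ep \in W^1_0 \mathcal L^A(\om) \cap L^\infty(\om) \subset W^1_0 L^A(\om)$, this is an admissible choice. Using~\eqref{A2'}, the definition~\eqref{Atheta-e} of $a^\ep$, and the Young identity~\eqref{FYeq} applied pointwise to the regularizing term, I would arrive at the coercivity inequality
\[
\int_\om \Phi(\nabla u^\ep)\, dx + \ep \int_\om A(|\nabla u^\ep|)\, dx + \ep \int_\om \wt A(A'(|\nabla u^\ep|))\, dx \leq \int_\om f u^\ep\, dx.
\]
The whole task then reduces to upper bounding the right-hand side by $\tfrac12 \int_\om \Phi(\nabla u^\ep)\,dx$ plus an $\ep$-independent constant, so that the first term can be absorbed.

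For Case~\eqref{hpf}, I would combine the sharp H\"older--Orlicz inequality~\eqref{holderint} --- whose scaled version (obtained on replacing $u$ by $u/\mu$ and $v$ by $\mu v$ for $\mu>0$) leaves $|uv|$ unchanged --- with the modular Orlicz--Sobolev embedding~\eqref{optint}. Applying~\eqref{optint} to $u^\ep/(2C)$, exploiting the convexity bound $\Phi(\xi/\nu)\leq \Phi(\xi)/\nu$ for $\nu\geq 1$, and tuning $\mu = 2C\kappa_3$ (where $C$ denotes the constant appearing in~\eqref{holderint}), should yield
\[
\int_\om |fu^\ep|\,dx \leq \tfrac12 \int_\om \Phi(\nabla u^\ep)\,dx + C\int_0^{|\om|}\wt{\Phi_\circ}(2C\kappa_3\, s^{1/n} f^{**}(s))\,ds,
\]
and the remaining integral is finite precisely by the assumption $f\in E[\wt{\Phi_\circ}, n](\om)$ contained in~\eqref{hpf}.

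For Case~\eqref{hpfbis}, the idea is that under~\eqref{intconv} the anisotropic Sobolev inequality~\eqref{B-W} produces a pointwise $L^\infty$ bound on~$u^\ep$ in terms of $M_\ep := \int_\om \Phi(\nabla u^\ep)\,dx$. Indeed, under~\eqref{intconv} the Sobolev conjugate $\Phi_n$ becomes $+\infty$ past some finite threshold $t_*>0$, so the finiteness of the left-hand side of~\eqref{B-W} forces $|u^\ep(x)|/(\kappa_2 M_\ep^{1/n}) \leq t_*$ a.e. in $\om$. Feeding the resulting estimate $\|u^\ep\|_{L^\infty(\om)} \leq \kappa_2 t_* M_\ep^{1/n}$ together with the trivial bound $\int_\om f u^\ep\,dx \leq \|f\|_{L^1(\om)}\|u^\ep\|_{L^\infty(\om)}$ into the coercivity inequality would then give $M_\ep \leq \kappa_2 t_* \|f\|_{L^1(\om)}\, M_\ep^{1/n}$, whence the $\ep$-uniform bound $M_\ep \leq (\kappa_2 t_*\|f\|_{L^1(\om)})^{n/(n-1)}$.

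Once the modular quantity $\int_\om\Phi(\nabla u^\ep)\,dx$ is uniformly under control in both cases, assertion~(i) follows from the standard comparison $\|\nabla u^\ep\|_{L^\Phi(\om)}\leq 1+\int_\om\Phi(\nabla u^\ep)\,dx$ between the Luxemburg norm and its associated modular; assertion~(ii) is immediate from the coercivity inequality; and assertion~(iii) follows from~\eqref{A2''} integrated over $\om$, which yields $\int_\om\wt\Phi(c_\Phi a(x,\nabla u^\ep))\,dx \leq \int_\om\Phi(\nabla u^\ep)\,dx + \|h\|_{L^1(\om)}$, an estimate that upgrades to a uniform norm bound in~$L^{\wt\Phi}(\om;\rn)$ by the same modular-to-norm comparison. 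The most delicate point is the bookkeeping of constants in Case~\eqref{hpf} that is needed to reach the decisive factor $\tfrac12$; in Case~\eqref{hpfbis} the subtle point is to interpret~\eqref{B-W} when~\eqref{intconv} is in force, for which one uses the standard convention that $\Phi_\circ$ may be harmlessly replaced near zero by an equivalent Young function so as to ensure~\eqref{conv0} and hence to define $\Phi_n$.
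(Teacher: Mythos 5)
Your argument is correct in substance, but it follows a genuinely different route from the paper's. The paper also starts from the energy inequality \eqref{en:eq:imp-e}, but then it never absorbs anything: it invokes the symmetrization comparison principle of \cite{Ci-sym}, which yields $(u^\ep)^*\leq v^*$ (equation \eqref{june29}) for the explicit radially symmetric solution $v$ of the symmetrized problem \eqref{symmetrized}, and it estimates $\int_\om f u^\ep\,dx$ entirely in terms of $f$ by combining \eqref{holderint}, \eqref{optint} applied to $v$ (not to $u^\ep$), the explicit formula for $|\nabla v|$, and the identities of Lemma~\ref{lem:aux-aniso-est}; in the case \eqref{hpfbis} it bounds $\|u^\ep\|_{L^\infty}\leq\|v\|_{L^\infty}$ and uses \eqref{B-Wter} for $v$. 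You instead work directly on $u^\ep$: in case \eqref{hpf} you rescale \eqref{holderint}, apply \eqref{optint} to $u^\ep$ itself and absorb $\tfrac12\int_\om\Phi(\nabla u^\ep)\,dx$ into the left-hand side (exploiting that $E[\wt{\Phi_\circ},n](\om)$ tolerates arbitrary scaling constants), while in case \eqref{hpfbis} you close a sublinear recursion $M_\ep\leq c\,M_\ep^{1/n}$ coming from the modular inequality \eqref{B-W} with $\Phi_n=+\infty$ beyond a threshold. Your scheme is shorter and avoids the symmetrization machinery altogether; the paper's scheme buys the sharp rearrangement bound \eqref{june29}, which is reused later (Proposition~\ref{boundedsol} and the sharp constants in Remark~\ref{bounddiamond}) but is not needed for the present proposition.

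A few points should be made explicit to make your route airtight. First, absorption is only legitimate because $\int_\om\Phi(\nabla u^\ep)\,dx<\infty$ a priori, which follows from $u^\ep\in W^1_0\mathcal L^A(\om)$ and $A(|\xi|)\geq\Phi(\xi)$; state this. Second, the inequalities \eqref{holderint} and \eqref{optint} presuppose \eqref{conv0}, so the replacement of $\Phi$ (or $\Phi_\circ$) near zero that you mention only for case \eqref{hpfbis} must also be performed in case \eqref{hpf} when \eqref{conv0} fails (the paper does this with its function $\Phi_\bullet$); the modification only costs an additive constant proportional to $|\om|$ and leaves $E[\wt{\Phi_\circ},n](\om)$ unchanged, so the absorption survives. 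Third, in case \eqref{hpfbis} the paper states the modular inequality \eqref{B-W} only under \eqref{intdiv} and records merely the norm form \eqref{B-Wter} under \eqref{intconv}; your argument needs the modular form with $\Phi_n$ allowed to take the value $+\infty$, which is indeed contained in \cite{Cfully} but should be cited as such rather than read off from \eqref{B-W} as displayed here (note that using \eqref{B-Wter} instead would give only a linear, non-closable recursion). Finally, the choice $\nu=2C$ in your convexity step requires $C\geq\tfrac12$, which is harmless after enlarging the constant in \eqref{holderint}.
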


\begin{proof} We shall make use of a comparison principle estanblished in \cite{Ci-sym}, that links the solution $u^\ep$ to~the~solution  $v$ to the  symmetrized problem
\begin{equation}\label{symmetrized}
\left\{\begin{array}{ll}-\dv \left(\displaystyle \frac{\Phi_\Diamond(|\nabla v|)}{|\nabla v|^2}\nabla v\right)=
f^\bigstar (x)&\text{in }\Omega^\bigstar\\
v=0&\text{on }\partial\Omega^\bigstar\,,
\end{array}\right.
\end{equation}
where $\Phi_\Diamond$ is defined in \eqref{phidiamond}, $\om
^\bigstar$ denotes the open ball centered at the origin such that $|\om
^\bigstar|=|\om|$, and $f^\bigstar$ stands for the radially
decreasing symmetral of $f$. Recall that $f^\bigstar (x)= f^*(\omega _n |x|^n)$ for $x \in \Omega ^\bigstar$, where $\omega _n$ denotes  Lebesgue measure of the unit ball in $\rn$.
According to~\cite[Theorem~3.1]{Ci-sym}, our alternate assumptions \eqref{hpf} or \eqref{hpfbis} on $f$ ensure that
 problem \eqref{symmetrized} actually admits a weak solution $v$, given by
\begin{equation}\label{june23}
v(x) = \int^{|\Omega|}_{\omega _n |x|^n}\frac{1}{n\omega
_n^{1/n}s^{1/n^\prime}}\Psi _\Diamond
^{-1}\bigg(\frac{s^{1/n}}{n\omega _n^{1/n}}f^{**}(s)\bigg)\,ds \quad
{\rm for} \,\,\, x \in \Omega ^\bigstar \,,
\end{equation}
where $\Psi_\Diamond$ is the function defined as in \eqref{Psi}.
Indeed, by \eqref{june23},
\begin{equation}\label{june24}
|\nabla v(x)| = \Psi _\Diamond ^{-1}\bigg(\frac{(\omega
_n|x|^n)^{1/n}}{n\omega _n^{1/n}}f^{**}(\omega _n|x|^n)\bigg) \quad
{\rm for \,\, a.e. } \,\,\, x \in \Omega ^\bigstar \,.
\end{equation}
Thus
\begin{equation}\label{june25}
\int _{\Omega ^\bigstar} G(|\nabla v|)\,dx = \int_0^{|\om|}G
\bigg(\Psi _\Diamond ^{-1}\bigg(\frac{s^{1/n}}{n\omega
_n^{1/n}}f^{**}(s)\bigg)\bigg)\, ds\,
\end{equation}
for every continuous function $G : [0, \infty) \to [0, \infty)$.
Equation \eqref{june25}, with $G = \Phi _\Diamond$, combined with property {\emph {(v)}} of Lemma \ref{lem:aux-aniso-est}
and \eqref{equivdiamond}, tells us that
\begin{equation}\label{june27}
\int _{\Omega ^\bigstar} \Phi _\Diamond(|\nabla v|)\,dx \leq
\int_0^{|\om|}\widetilde{\Phi_\circ}\big(c s^{1/n} f^{**}(s)\big)\,
ds
\end{equation}
for some constant $c$ depending on $n$. If  \eqref{hpf}  is in
force, then  the last integral converges, owing to~the~very
definition of the space $E[\widetilde{\Phi_\circ}, n](\om)$. Suppose
that, instead, \eqref{hpfbis} holds. Then, owing to the inequality
$$f^{**}(s) \leq \frac 1s \int_0^{|\Omega|}f^*(r)\, dr = \frac 1s\|f\|_{L^1(\om)} \quad  \hbox{for $s\in (0, |\om|)$,}$$
the convergence of the integral on the right-hand side of inequality
\eqref{june27} is a consequence of the fact that
\begin{equation}\label{oct1}
\int _0 \widetilde{\Phi _\circ}\big(\lambda
s^{-\frac 1{n'}}\big)\, ds < \infty \quad \hbox{for every $\lambda
>0$.}
\end{equation}
Indeed,  a change of variables in the integral in \eqref{oct1} tells
us that the latter condition can be rewritten as
\begin{equation}\label{oct1'}\int ^\infty \frac{\widetilde{\Phi _\circ} (t)}{t^{n' +1}}\, dt < \infty\,,
\end{equation}
which turns to be equivalent -- see \cite[Lemma
4.1]{cianchi_ibero} -- to condition \eqref{intconv} appearing in \eqref{hpfbis}. Altogether, we have shown that
\begin{equation}
\label{june28} \int _{\Omega ^\bigstar} \Phi _\Diamond(|\nabla
v|)\,dx < \infty\,
\end{equation}
under  either assumption \eqref{hpf} or \eqref{hpfbis}. This implies that  $v \in W^1_0\mathcal L^{\Phi _\Diamond}(\om ^\bigstar)$, and hence it is indeed a~weak solution to problem \eqref{symmetrized}.
\\
 On making use of the solution
   $u^\ep$ as a test function in the weak formulation of problem \eqref{feb10},
and recalling assumption \eqref{A2'} we deduce that
\begin{equation}
\label{en:eq:imp-e}
 \int_{\Omega } \Phi(\nabla u ^\ep) \,dx+\int_{\Omega }\ep A(|\nabla u ^\ep|)\, dx+\int_{\Omega }\ep \wt{A}(A'(|\nabla u ^\ep| ))\,dx   \leq  \int_{\Omega } f u ^\ep\, dx .
\end{equation}
In particular, inequality \eqref{en:eq:imp-e} ensures that $u^\ep \in W^1_0\mathcal L^\Phi(\om)$, and hence \cite[Theorem 3.1]{Ci-sym}  can be exploited to infer that
\begin{equation}
\label{june29} (u^\ep)^*(s) \leq v^*(s) \quad \hbox{for $s \in (0,
|\om|)$.}
\end{equation}
We now distinguish between  the cases when  either assumption \eqref{hpf} or   \eqref{hpfbis} holds.
 \\ Assume first that  condition \eqref{hpf} is
in force.
Let us replace, if necessary, $\Phi_\circ$ in the definition of
$\widehat{\Phi_\circ}$ in~\eqref{Bphi} by another Young function
$\Phi_\bullet$ fulfilling condition \eqref{conv0} and such that  $\Phi_\bullet(t) = \Phi_\circ(t)$ if $t
\geq 1$. For instance, one can
define $\Phi_\bullet$ in such a way that it is linear in $[0,1]$.
Therefore, there exists  a constant $t_1>0$ such that $\widetilde
{\Phi_\bullet}(t) = \widetilde {\Phi_\circ}(t)$ if $t\geq t_1$.
Denote by $\widehat {\Phi_\bullet}$ the function  defined as in
\eqref{Bphi} and \eqref{2,-2}, with $\Phi_\circ$ replaced by
$\Phi_\bullet$.
 Let $\lambda$ be a positive number to be fixed later. By
inequality \eqref{holderint}, with $\Phi_\circ$ replaced by
$\Phi_\bullet$,
\begin{align}
\label{june30} \int_{\Omega } f u ^\ep\, dx & \leq C
\bigg(\int_0^{|\om|}\widetilde {\Phi_\bullet}\big(\lambda s^{\frac
1n}f^{**}(s)\big)\, ds + \int_0^{|\om|} \widehat {\Phi_\bullet}
\big({\tfrac 1 \lambda}  s^{-\frac 1n} (u^\ep)^*(s)\big)\, ds\bigg)\,.
\end{align}
Choose  $\lambda = { {\kappa_3}}/c_1$, where $\kappa_3$ and $c_1$ are the constants appearing in inequalities
\eqref{optint} and \eqref{equivdiamond}, respectively.
The following chain holds:
\begin{align}
\label{june31} \int_0^{|\om|} &\widehat {\Phi_\bullet} \bigg(\frac 1\lambda
 s^{-\frac 1n} (u^\ep)^*(s)\bigg)\, ds\  \leq
  \int_0^{|\om|} \widehat {\Phi_\bullet} \bigg({ \frac 1 \lambda} s^{-\frac 1n} v^*(s)\bigg)\, ds
 \\ \nonumber & \leq
  \int _{\om ^\bigstar}\Phi _\bullet \bigg( {\frac {\kappa_3} \lambda}|\nabla v|\bigg)\, dx
\leq
\int_0^{|\om|}\Phi _\bullet \bigg({ \frac {\kappa_3} \lambda}\bigg(\Psi
_\Diamond ^{-1}\bigg(\frac{s^{1/n}}{n\omega
_n^{1/n}}f^{**}(s)\bigg)\bigg)\bigg)\, ds
\\ \nonumber & \leq
\int_{s_0}^{|\om|}\Phi _\bullet \bigg({ \frac {\kappa_3} \lambda} \bigg(\Psi
_\Diamond ^{-1}\bigg(\frac{s^{1/n}}{n\omega
_n^{1/n}}f^{**}(s)\bigg)\bigg)\bigg)\, ds
+
\int_0^{|\om|}\Phi _\circ \bigg({ \frac {\kappa_3} \lambda} \bigg(\Psi
_\Diamond ^{-1}\bigg(\frac{s^{1/n}}{n\omega
_n^{1/n}}f^{**}(s)\bigg)\bigg)\bigg)\, ds
\\ \nonumber & \leq
|\om|\Phi _\circ (1) + \int_0^{|\om|}\Phi _\Diamond
\bigg(\frac{{ {\kappa_3}}}{c_1 \lambda} \bigg(\Psi _\Diamond
^{-1}\bigg(\frac{s^{1/n}}{n\omega
_n^{1/n}}f^{**}(s)\bigg)\bigg)\bigg)\, ds
\\ \nonumber &
 =
|\om|\Phi _\circ (1)
  +
\int_0^{|\om|}\Phi _\Diamond \bigg(\Psi _\Diamond
^{-1}\bigg(\frac{s^{1/n}}{n\omega _n^{1/n}}f^{**}(s)\bigg)\bigg)\,
ds
\\ \nonumber &
\leq
|\om|\Phi _\circ (1)
  +
\int_0^{|\om|}\widetilde{\Phi _\Diamond
}\bigg(\frac{2s^{1/n}}{n\omega _n^{1/n}}f^{**}(s)\bigg) \, ds
 \leq
|\om|\Phi _\circ (1)
 +
\int_0^{|\om|}\widetilde{\Phi _\circ
}\bigg(\frac{2s^{1/n}}{c_1n\omega _n^{1/n}}f^{**}(s)\bigg) \, ds\,.
\end{align}
Note that the first inequality is due to \eqref{june29}, the second
to \eqref{optint}  (with $\Phi_\circ$ replaced by $\Phi_\bullet$),
the third by \eqref{june25}, the fourth by the definition of
$\Phi_\bullet$, where $s_0 \in [0,|\om|]$ is chosen in such a way
that
$$s_0 = \inf \bigg\{s \in [0, |\om|]: {\frac {\kappa_3} \lambda} \bigg(\Psi _\Diamond
^{-1}\bigg(\frac{s^{1/n}}{n\omega _n^{1/n}}f^{**}(s)\bigg)\bigg)
\leq 1 \bigg\},$$ the fifth by \eqref{equivdiamond},  the
equality holds owing to the very choice of $\lambda$,   the sixth
inequality   is a consequence of property \emph{(v)} of Lemma  \ref{lem:aux-aniso-est}, and the last one
follows via \eqref{equivdiamond} again.
\\ On the other hand,
\begin{align}\label{june32}
\int_0^{|\om|}\widetilde {\Phi_\bullet}\big(\lambda s^{\frac
1n}f^{**}(s)\big)\, ds & = \int_0^{|\om|}\widetilde
{\Phi_\bullet}\bigg(\frac {{\kappa_3}} {c_1} s^{\frac 1n}f^{**}(s)\bigg)\,
ds \\ \nonumber & \leq \int_{s_1}^{|\om|}\widetilde
{\Phi_\bullet}\bigg(\frac {{\kappa_3}}{c_1} s^{\frac 1n}f^{**}(s)\bigg)\,
ds + \int_0^{|\om|}\widetilde {\Phi_\circ}\bigg(\frac {{\kappa_3}}{c_1}
s^{\frac 1n}f^{**}(s)\bigg)\, ds
\\ \nonumber & \leq |\om| \widetilde {\Phi_\circ} (t_1) + \int_0^{|\om|}\widetilde {\Phi_\circ}\bigg(\frac {{\kappa_3}}{c_1} s^{\frac 1n}f^{**}(s)\bigg)\, ds\,,
\end{align}
where
$$s_1 = \inf \bigg\{s \in [0, |\om|]: \frac {{\kappa_3}}{c_1} s^{\frac 1n}f^{**}(s) \leq t_1 \bigg\}.$$
The rightmost sides in inequalities \eqref{june31} and
\eqref{june32} are finite, owing to  assumption \eqref{hpf}, and
only depend on $f$, $n$ and $\Phi$. From inequalities
\eqref{en:eq:imp-e} and \eqref{june30} one thus deduces that there
exists a constant $C$, depending on these data, such that
\begin{equation}\label{june34}
\int_{\Omega } \Phi(\nabla u ^\ep) \,dx+\int_{\Omega }\ep A(|\nabla
u ^\ep|)\, dx+\int_{\Omega }\ep \wt{A}(A'(|\nabla u ^\ep |))\,dx
\leq C
\end{equation}
for $\ep \in (0,1)$. Assertions {\it (i)--(ii)} follow from
\eqref{june34}.
 Assertion {\it (iii)} follows on coupling inequality~\eqref{june34}
with assumption~\eqref{A2''}.
\\  Assume next that condition \eqref{hpfbis}
holds. Then $W_0^1L^\Phi(\Omega)\to L^\infty(\Omega)$,
and from equations
\eqref{en:eq:imp-e}, \eqref{june29},  \eqref{B-Wter}, \eqref{june24}
and \eqref{equivdiamond} we obtain that
\begin{align}\label{june50}
\int_{\Omega } &\Phi(\nabla u ^\ep) dx   +\int_{\Omega }\ep
A(|\nabla u ^\ep|)\, dx+\int_{\Omega }\ep \wt{A}(A'(|\nabla u ^\ep
|))\,dx   \leq  \|f\|_{L^1(\om)}\|u^\ep\|_{L^\infty(\om)}  \\
\nonumber & \leq  \|f\|_{L^1(\om)}\|v\|_{L^\infty(\om^\bigstar)}
\leq C \|f\|_{L^1(\om)}\|\nabla v\|_{L^{\Phi_\circ}(\om^\bigstar)}
\\ \nonumber & \leq  C \|f\|_{L^1(\om)} \bigg\|\Psi _\Diamond
^{-1}\bigg(\frac{s^{1/n}}{n\omega
_n^{1/n}}f^{**}(s)\bigg)\bigg\|_{L^{\Phi_\circ}(0,|\om|)}
\leq  C \|f\|_{L^1(\om)} \bigg\|\Psi _\Diamond
^{-1}\bigg(\frac{s^{1/n}}{n\omega
_n^{1/n}}f^{**}(s)\bigg)\bigg\|_{L^{\Phi_\Diamond}(0,|\om|)}
\end{align}
for $\ep \in (0,1)$, and for some constants $C$ and $C'$ depending on
$n$,  $\Phi_\circ$ and $|\om|$. We claim that the last norm on the
rightmost side of inequality \eqref{june50} is finite, since $f \in
L^1(\om)$. This is a consequence of the fact that  $s^{1/n} f^{**}(s)\leq s^{-1/n'} \|f\|_{L^1(\om)}$ for $s \in (0, |\om|)$, of property \emph{(v)} of Lemma \ref{lem:aux-aniso-est}, of equation
\eqref{equivdiamond}, and of  \eqref{oct1}, which is equivalent to \eqref{intconv}.
Therefore, inequality \eqref{june34} holds
also in this case. One can then conclude as above.
\end{proof}

\subsection{A Minty--Browder--type result}

The following proposition provides us with  an anisotropic version of a  classical result, known as the Minty-Brown monotonicity trick.  It will be applied later, in~the~identification of  limits of certain nonlinear expressions in an approximation process.
\begin{proposition}{\rm {\bf [A monotonicity trick]}}\label{prop:mon-e}
 Let $\om$ be a measurable set in $\rn$  with $|\om|< \infty $. Assume that the  Carath\'eodory function  $a: \om \times \rn \to \mathbb R$
satisfies condition \eqref{A2''} for some $N$-function
$\Phi$.  Suppose that there exist functions
\begin{equation}\label{june37}
{Y}\in L^{\wt{\Phi}}(\Omega;\rn)\quad\text{ and }\quad U\in
L^{\Phi}(\Omega;\rn)
\end{equation} such that
\begin{equation}
\label{anty-mon} \int_\Omega \big(Y -a(x,V)\big)\cdot(U
-V)\,dx\geq0\qquad\hbox{for every $V\in L^\infty(\Omega;\rn)$}.
\end{equation}
Then
\begin{equation}\label{june35}
a(x,U(x))=Y(x)\qquad\hbox{for a.e. $x \in  \Omega$.}
\end{equation}
\end{proposition}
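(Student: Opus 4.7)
The plan is to perform a Minty--Browder monotonicity trick, adapted to the fact that $U \in L^\Phi(\Omega;\rn)$ need not be essentially bounded and that $\Phi$ satisfies no $\Delta_2$-type condition. Writing $E_k = \{|U|\leq k\}$ for $k \in \mathbb{N}$, the idea is to test \eqref{anty-mon} with the bounded field
\begin{equation*}
V_{m,n} = (U - \lambda W)\chi_{E_n} + U\chi_{E_m \setminus E_n}
\end{equation*}
for integers $m \geq n$, a parameter $\lambda \in (0,1)$, and an arbitrary $W \in L^\infty(\Omega;\rn)$. The cutoff $\chi_{E_n}$ around the perturbation $\lambda W$ keeps the argument of $a$ bounded there, while the piece $U\chi_{E_m\setminus E_n}$ makes $V_{m,n}$ essentially bounded by $m + \lambda\|W\|_{L^\infty(\Omega;\rn)}$, hence admissible in~\eqref{anty-mon}.

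Expanding the integrand according to the partition $E_n \sqcup (E_m\setminus E_n) \sqcup E_m^c$, and noting that $U - V_{m,n}$ equals $\lambda W$, $0$, $U$ on those three pieces respectively, the hypothesis~\eqref{anty-mon} reduces to
\begin{equation*}
\lambda\int_{E_n}(Y - a(x, U - \lambda W))\cdot W\,dx + \int_{E_m^c}(Y - a(x,0))\cdot U\,dx \geq 0.
\end{equation*}
By \eqref{A2''} applied at $\xi = 0$, $\widetilde{\Phi}(c_\Phi a(x,0)) \leq h$, so $a(x,0) \in L^{\widetilde{\Phi}}(\Omega;\rn)$, and the H\"older inequality \eqref{holder} places $(Y - a(x,0))\cdot U$ in $L^1(\Omega)$; since $|E_m^c| \to 0$, absolute continuity of the Lebesgue integral drives the second term to zero as $m \to \infty$ with $n$, $\lambda$ and $W$ fixed, leaving $\int_{E_n}(Y - a(x, U - \lambda W))\cdot W\,dx \geq 0$.

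Finally I would let $\lambda \to 0^+$ with $n$ and $W$ fixed. On $E_n$ one has $|U - \lambda W| \leq n + \|W\|_{L^\infty(\Omega;\rn)}$ uniformly in $\lambda \in (0,1)$; since $\Phi$ is finite-valued, $\sup_{|\xi|\leq n + \|W\|_{L^\infty(\Omega;\rn)}}\Phi(\xi) < \infty$, and the Young-type vector inequality $|a\cdot W| \leq \widetilde{\Phi}(c_\Phi a) + \Phi(W/c_\Phi)$ combined with \eqref{A2''} furnishes an $L^1(E_n)$ majorant for $|(Y - a(x, U - \lambda W))\cdot W|$ that is independent of $\lambda$. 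The Carath\'eodory continuity $a(x, U - \lambda W)\to a(x,U)$ a.e.\ on $E_n$ and the dominated convergence theorem then yield $\int_{E_n}(Y - a(x,U))\cdot W\,dx \geq 0$; replacing $W$ by $-W$ gives the reverse inequality, so both are equalities. Testing against $W = e_i\chi_F$ for each standard basis vector $e_i$ of $\rn$ and each measurable $F \subset E_n$ forces $Y_i = a_i(x,U)$ a.e.\ on $E_n$, and letting $n \to \infty$ covers $\Omega$ up to a null set. The crux bypassed by the double cutoff is the unavailability of $\Phi(U - \lambda W) \in L^1(\Omega)$ in the absence of a $\Delta_2$-condition: a single-cutoff substitution would leave an $E_n^c$-remainder whose control under division by $\lambda$ would require precisely this integrability, whereas here the remainder is $\lambda$-independent and can be removed by sending $m \to \infty$ before the small-$\lambda$ limit.
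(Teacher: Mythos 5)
Your argument is correct and is essentially the paper's own proof: your double-cutoff test field $V=(U-\lambda W)\chi_{E_n}+U\chi_{E_m\setminus E_n}$ is exactly the paper's choice $V=U\chi_{\Omega_k}+\sigma Z\chi_{\Omega_j}$ (with $Z=-W$), the outer remainder over $E_m^c$ is removed first via integrability of $(Y-a(x,0))\cdot U$, and the small-parameter limit rests on \eqref{A2''} in the same way. The only cosmetic deviations are that you justify the $\lambda\to 0^+$ limit by an explicit Young-inequality majorant and dominated convergence instead of the paper's De La Vall\'ee Poussin/Vitali step, and you conclude by testing with $W=\pm e_i\chi_F$ rather than the paper's choice $Z=-(a(x,U)-Y)/|a(x,U)-Y|$.
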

\begin{proof} Define the increasing family $\{\om _j\}$ of invading subsets of $\om$ as $\Omega_j=\{x\in\Omega:\ |U(x)|\leq j\}$
for~$j \in \mathbb N$. Fix any $j, k \in \mathbb N$ with $j<k$. An
application of inequality \eqref{anty-mon}, with
$V=U \chi_{\Omega_k}+\sigma Z\chi_{\Omega_j}$
for any  $\sigma \in (0,1)$ and any function $Z\in
L^\infty(\Omega;\rn)$, yields
\begin{equation*}
\int_\Omega( Y - a(x,U\chi_{\Omega_k}  +\sigma Z\chi_{\Omega_j})
)\cdot( U-U\chi_{\Omega_k} -\sigma Z\chi_{\Omega_j})\, dx\geq  0.
\end{equation*}
The last inequality is equivalent to
\begin{equation}
\label{A-po-mon}  \int_{\Omega\setminus\Omega_k} \big(Y
-a(x,0)\big)\cdot U\,dx + \sigma \int_{ \Omega_j} (a(x,U+\sigma Z)-Y
)\cdot Z\,  dx\geq  0.
\end{equation}
The first integral on the left-hand side of inequality
\eqref{A-po-mon} tends to zero as $k\to\infty$. Indeed, assumption \eqref{A2''}
implies  that $\big(Y -a(x,0)\big) \cdot  U \in L^1(\om)$, and hence the
convergence follows owing to assumption \eqref{june37} and
H\"older's inequality~\eqref{holder}. Thus, passing to the limit as
$k\to\infty$ in inequality \eqref{A-po-mon} and dividing by
$\sigma$ the resultant  inequality tells us that
\begin{equation*} \int_{ \Omega_j} (a(x,U+\sigma Z)-Y )\cdot Z\,  dx\geq  0.
\end{equation*}
Clearly,
\begin{equation}
\label{A-poz} \lim _{\sigma\to 0^+} a(x,U+\sigma U) = a(x,U)\quad
\hbox{for a.e. $x \in \Omega_j$.}
\end{equation}
Moreover, by \eqref{A2''},
\begin{equation}
\label{june38} \sup_{\sigma\in(0,1)} \int_{ \Omega_j}
\wt{\Phi}\left( c_\Phi a(x,U+\sigma Z)\right)dx\leq    \int_{
\Omega_j} \sup_{\sigma \in(0,1)} {\Phi}\left(U+\sigma
Z\right)dx+\int_{ \Omega_j} h(x)\,dx.\color{black}
\end{equation}
The integral on the right-hand side of \eqref{june38} is finite,
since the function $\sup_{\sigma \in(0,1)}(U+\sigma Z)$, and hence
also the function $\sup_{\sigma \in(0,1)} {\Phi}\left(U+\sigma
Z\right)$, is
 bounded in $\Omega_j$. By
Theorem~\ref{theo:delaVP}, the family of functions   $\{a(x,U+\sigma
Z)\}_{\sigma \in (0,1)}$ is       uniformly integrable in
$\Omega_j$. Hence, owing to Theorem~\ref{theo:VitConv},
\[\lim _{\sigma \to 0^+} a(x,U+\sigma Z) = a(x,U)\quad \hbox{in $L^1(\Omega_j{;}\rn)$}.\]
Thus,
\begin{equation*} \lim _{j \to \infty}\int_{ \Omega_j} (a(x,U+\sigma Z)-Y )\cdot Z \, dx=  \int_{ \Omega_j} (a(x,U)-Y )\cdot Z \, dx.
\end{equation*}
Consequently,
\begin{equation*}  \int_{ \Omega_j} (a(x,U)-Y )\cdot Z\,  dx \geq 0
\end{equation*}
for every $Z\in L^\infty(\Omega{;}\rn)$. The choice of
\[Z=\left\{\begin{array}{ll}-\frac{a(x,U)-Y }{|a(x,U)-Y |}&\ \text{if}\quad a(x,U)-Y \neq 0\\
0&\ \text{if}\quad a(x,U)-Y = 0,
\end{array}\right.\]
ensures that
\begin{equation*}  \int_{ \Omega_j} |a(x,U)-Y |\, dx\leq 0,
\end{equation*}
whence \[a(x,U(x))=Y (x)\quad \hbox{for a.e. $x\in \Omega _j$}.\]
Equation \eqref{june35} follows, owing to the arbitrariness of $j$.
\end{proof}

\subsection{Proof of existence of weak solutions}

We are now ready to accomplish the proofs of Theorem~\ref{theo:boundex} and of Proposition \ref{boundedsol}.
\begin{proof}[Proof of Theorem \ref{theo:boundex}]  Let $A$ be an $N$-function as in Propositions \ref{prop:reg-bound-e} and~\ref{prop:uniform1}, and let $\{u^\ep\} \subset  W_{0}^{1}\mathcal L^A(\Omega)\cap L^\infty(\om)$ be the family of solutions to problems
\eqref{feb10}
for $\ep \in (0,1)$. By property {\it (i)} of
Proposition~\ref{prop:uniform1},  this family is bounded in
$W^{1}_0L^\Phi(\Omega)$, and hence in $W^{1,1}_0(\Omega)$.
Therefore, it is compact in $L^1(\Omega)$, and consequently there exists a
function $u \in L^1(\om)$ and a sequence $\{u^{\ep_k}\}$ such that
$u^{\ep_k}\to u$ in $L^1(\om)$ and a.e. in $\om$.
Property {\it (i)} of Proposition~\ref{prop:uniform1} and
Theorem~\ref{theo:Banach-Alaoglu} then  ensure   that the family of
functions $\{\nabla u^{\ep_k}\}$ is weakly-$*$ compact in $L^\Phi
(\om{;\rn})$. Since $u^{\ep_k}\to u$ in $L^1(\om)$, we have that $u$ is
weakly differentiable, and its gradient agrees with the  weak-$*$
limit of $\{\nabla u^{\ep_k}\}$  in $L^\Phi (\om; \rn)$.
Similarly, property {\it (iii)} of~Proposition~\ref{prop:uniform1} and
Theorem~\ref{theo:Banach-Alaoglu} again imply   that the family of
functions $\{a(x, \nabla u^{\ep_k})\}$ is weakly-$*$ compact in $L^{\widetilde \Phi}
(\om{;\rn})$.
Finally, property {\it (i)} of
Proposition~\ref{prop:uniform1} implies, via Theorems
\ref{theo:delaVP} and \ref{theo:dunf-pet}, that the family $\{\nabla
u^\ep\}$ is  weakly compact in $L^1(\om{;}\rn)$.
Altogether, there exists a decreasing sequence $\{\ep_k\}$,
fulfilling   $\ep_k \to 0^+$, and  functions $u\in W^{1}_0
L^{\Phi}(\Omega)$ and $Y \in L^{\wt{\Phi}}(\Omega{;}\rn)$ such that
\begin{alignat}{4}
\label{andrea31}
u^{\ep_k} &\to  u\quad &&\text{in $L^1(\Omega)$ and a.e. in $\Omega$},\\
\label{conv:untLi-e}
u^{\ep_k} &\xrightharpoonup{} u\quad &&\text{weakly in $W^{1,1}(\Omega)$},\\
\label{limDut-e}
\nabla u^{\ep_k} &\xrightharpoonup* \nabla u\quad && \text{weakly-$*$ in } L^\Phi(\Omega{;}\rn),\\
\label{limaDut-e}
a (x,\nabla u^{\ep_k}) &\xrightharpoonup* Y \quad && \text{weakly-$*$ in } L^{\wt{\Phi}}(\Omega{;}\rn).
\end{alignat}
By the weak formulation of problem \eqref{feb10}
with $\ep= \ep_k$,
\begin{equation}\label{june45}
\int_\Omega a(x,\nabla u ^{\ep_k})\cdot \nabla \vp + {\ep_k}
A'(|\nabla u ^{\ep_k}|) \frac{\nabla u ^{\ep_k}}{|\nabla u
^{\ep_k}|}\cdot \nabla \vp\, dx = \int_\Omega f\,\vp\, dx
\end{equation}
for every   $\vp\in  W_0^{1}\mathcal L^{A}(\Omega)$. Notice that any such $\vp$ is automatically bounded by the classical Sobolev embedding, since our assumptions on $A$ imply that $A(t)\geq t^q$ near infinity for some $q>n$. 
We begin by
observing that
\begin{equation} \label{thetamvanish}
\lim_{k \to \infty}\int_\Omega {\ep_k} A'(|\nabla u
^{\ep_k}|)\frac{\nabla u ^{\ep_k}}{|\nabla u ^{\ep_k}|} \cdot\nabla
{\vp}\,dx =0\qquad\text{for every}\quad {\vp}\in C_0^\infty(\Omega).
\end{equation}
To verify this assertion, consider, for fixed $j\in \mathbb N$, the set
\[\Omega^{\ep_k}_{j}=\{x\in \Omega:\quad |\nabla u ^{\ep_k}|\leq j\}.\]
Plainly,
\begin{multline}\label{june40}
\int_\Omega{\ep_k}A'(|\nabla u ^{\ep_k}|)\frac{\nabla u
^{\ep_k}}{|\nabla u ^{\ep_k}|} \cdot\nabla  {\vp}\,dx
\\ =\int_{\Omega^{\ep_k}_{j}}{\ep_k}A'(|\nabla u ^{\ep_k}|)\frac{\nabla u ^{\ep_k}}{|\nabla u ^{\ep_k}|} \cdot\nabla  {\vp}\,dx +\int_{\Omega \setminus\Omega^{\ep_k}_{j}}{\ep_k} A'(|\nabla u ^{\ep_k}|)\frac{\nabla u ^{\ep_k}}{|\nabla u ^{\ep_k}|} \cdot\nabla  {\vp}\,dx .
\end{multline}
Inasmuch as $A'$ is a non-decreasing function,
\begin{equation}\label{june41}
\limsup_{k \to
\infty}\bigg|\int_{\Omega^{\ep_k}_{j}}{\ep_k}A'(|\nabla u
^{\ep_k}|)\frac{\nabla u ^{\ep_k}}{|\nabla u ^{\ep_k}|} \cdot\nabla
{\vp}\,dx\bigg| \leq |\Omega|\|\nabla
{\vp}\|_{L^\infty(\Omega)}A'(j) \lim_{k \to \infty}
{\ep_k}=0.
\end{equation}
On the other hand, since the sequence
  $\{|\nabla u^{\ep_k}|\}$ is uniformly integrable in
$L^1(\Omega)$, there exists a constant $C$, independent of $k$, such
that
\begin{equation}
\label{qtrc:meas-e}\sup_{k \in \mathbb
N}|\Omega\setminus\Omega^{\ep_k}_{j}|\leq  \frac Cj.
\end{equation}
Furthermore, since ${\widetilde A}$ is an $N$-function, one has that
 $\wt{A}(\lambda t)\leq\lambda
\wt{A}(t)$, provided that $t \geq 0$ and $\lambda \in (0,1)$.
Thereby, $\wt{A}({\ep_k}A'(|\nabla u ^{\ep_k}|)) \leq {\ep_k}
\wt{A}(A'(|\nabla u ^{\ep_k}|))$, and hence, by property
\textit{(ii)} of Proposition \ref{prop:uniform1}, the sequence
$\{{\ep_k} A' (|\nabla u ^{\ep_k}|)\}$ is uniformly bounded in
$L^{\wt{A}}(\om)$. Thanks to  Theorem~\ref{theo:delaVP}, the sequence
$\{{\ep_k}A'(|\nabla u ^{\ep_k}|)\}$ is uniformly integrable in
$\om$. Coupling this piece of information with \eqref{qtrc:meas-e}
implies that
\begin{multline}\label{june42}
\limsup _{j \to \infty}\bigg(\sup _{k \in \mathbb
N}\bigg|\int_{\Omega\setminus\Omega^{\ep_k}_{j}}{\ep_k}A'(|\nabla u
^{\ep_k}|)\frac{\nabla u ^{\ep_k}}{|\nabla u ^{\ep_k}|} \cdot\nabla
{\vp}\,dx\bigg|\bigg) \\ \leq \|\nabla {\vp}\|_{L^\infty(\Omega)}
\lim _{j \to \infty}\bigg(\sup_{k \in \mathbb
N}\int_{\Omega\setminus\Omega^{\ep_k}_{j}}{\ep_k}|A'(|\nabla
u^{\ep_k}|)|\,dx\bigg) =0.
\end{multline}
Equation \eqref{thetamvanish} follows from \eqref{june40},
\eqref{june41} and \eqref{june42}.
\\ Thanks to \eqref{limaDut-e} and \eqref{thetamvanish}, choosing $\varphi \in C^\infty_0(\om)$ in \eqref{june45} and passing to the limit as $k \to \infty$  yield
\begin{equation} \label{lim:theta=0-e}
\int_\Omega Y \cdot \nabla  {\vp}\, d x= \int_\Omega f {\vp}\, dx\,.
\end{equation}
Since
$u^{\ep_k}\in W^{1}_0
L^\Phi(\Omega)\cap L^\infty(\Omega)$,
for
each $k \in \mathbb N$ the function $u^{\ep_k}$ can  be
approximated by a sequence of~functions from $C^\infty_0(\om)$ as in
Proposition~\ref{prop:approx}. On  making use of equation
\eqref{lim:theta=0-e} with $\vp$ replaced by~the~functions approximating $u^{\ep_k}$, passing to the limit in the approximating sequence, and recalling that
$Y \in L^{\wt{\Phi}}(\Omega {;} \rn)$ and that   the sequence of
approximating functions is uniformly bounded in $L^\infty(\om)$ by
$C\|u^{\ep_k}\|_{L^\infty(\om)}$ we infer that
\begin{equation} \label{lim:theta=0-e2}
\int_\Omega Y \cdot \nabla  u^{\ep_k}\, dx= \int_\Omega fu^{\ep_k}
\, dx
\end{equation}
for every $k \in \mathbb N$. Inasmuch as $u^{\ep _k}$ belongs to {\color{blue} $
W^{1}_0 \mathcal L^A(\Omega)\cap L^\infty(\om)$}, it can be used as a test function
in~the~weak formulation of problem \eqref{feb10}
with $\ep= \ep_k$.
 Therefore,
\begin{equation} \label{weak-reg-bound-e2}
\int_\Omega a(x,\nabla u ^{\ep_k})\cdot \nabla u ^{\ep_k} + {\ep_k}
A'(|\nabla u ^{\ep_k}|) |\nabla u ^{\ep_k}|\, dx = \int_\Omega f u
^{\ep_k}\, dx
\end{equation}
for every $k \in \mathbb N$. Since the second term in the integral
on the left-hand side of \eqref{weak-reg-bound-e2} is nonnegative,
equations \eqref{lim:theta=0-e2}, \eqref{weak-reg-bound-e2} and
\eqref{limDut-e} imply that
\begin{equation}\label{limsup<aln-e}
\limsup_{k \to \infty}\int_\Omega  a(x,\nabla u ^{\ep_k})\cdot\nabla
u ^{\ep_k}\,dx \leq \int_\Omega Y \cdot \nabla u  \,dx .
\end{equation}
Now, given any function $V \in L^\infty(\om {;} \rn)$, we have, by
assumption \eqref{A3},
\begin{align}
\label{june46} 0 & \leq \int_{\om} \big(a(x, V) - a(x, \nabla
u^{\ep_k})\big)\cdot (V- \nabla u^{\ep_k})\, dx
\\ \nonumber & \leq  \int_{\om} a(x, V)\cdot V \, dx - \int_{\om} a(x, V)\cdot \nabla u^{\ep_k}\, dx - \int_{\om} a(x, \nabla u^{\ep_k}) \cdot V\, dx + \int_{\om} a(x, \nabla u^{\ep_k}) \cdot \nabla u^{\ep_k}\, dx\,.
\end{align}
Passing to the limit as $k \to \infty$ on the rightmost side of
\eqref{june46}, and making use of \eqref{conv:untLi-e},
\eqref{limaDut-e} and~\eqref{limsup<aln-e} imply that
\begin{equation}
\label{mono:int-e} \int_\Omega (a(x,V)-Y )\cdot (V-\nabla u )\,dx
\geq 0.
\end{equation}
Therefore, we are in a position to apply
Proposition~\ref{prop:mon-e}, with  $U=\nabla u$, and deduce that
\begin{equation}\label{lim=ca-e}
a(x,\nabla u (x)) = Y (x) \quad \hbox{for a.e.  $x \in \om$.}
\end{equation}
Hence, in particular, $a(x,\nabla u) \in L^{\wt{\Phi}}(\om  {;} \rn)$.
Fix any test function $\vp \in C^\infty_0(\om)$. On passing to the
limit as $k \to \infty$ in equation \eqref{june45}, and exploiting
\eqref{limaDut-e}, \eqref{lim=ca-e} and \eqref{thetamvanish} one
concludes that
\begin{equation} \label{june47}
\int_\Omega a(x,\nabla u)\cdot \nabla \vp\, dx= \int_\Omega f \vp\,
dx
\end{equation}
for every $\vp \in C^\infty_0(\om)$. Equation \eqref{june47}
continues to hold for any test function   $\vp\in W^{1}_0\mathcal L^\Phi(\Omega)\cap L^\infty(\Omega)$ as in the definition of weak solution to problem \eqref{eq:main:f}. Actually, let
$\{\vp_k\}\subset C^\infty_0(\om)$ be a sequence approximating $\vp$
as in Proposition~\ref{prop:approx}. Then, from
 equation \eqref{june47} with $\vp$ replaced by $\vp_k$, we have that
 \[
\int_\Omega a (x,\nabla u)\cdot \nabla  {\vp}\, dx=\lim_{k \to
\infty}\int_\Omega a(x,\nabla u)\cdot \nabla  {\vp_k}\, dx= \lim_{k
\to \infty}\int_\Omega f\, {\vp_k}\, dx=\int_\Omega f\,{\vp}\, dx,
\]
 where the first equality holds by properties \eqref{angela1grad} and \eqref{july37},
 and the last equality,   since $\|\varphi _k\|_{L^\infty (\om)} \leq C\|\varphi \|_{L^\infty (\om)}$ for some constant $C=C(n)$ and every $k \in \mathbb N$.
\\ Finally, we have that
\begin{equation}
\label{warsaw1}\int_\Omega \Phi(\nabla u)\,dx<\infty.
\end{equation}
Indeed, since $\Phi$ is an $n$-dimensional $N$-function, inequality \eqref{warsaw1} follows, via semicontinuity, from the~convergence  in \eqref{andrea31} and estimate   \eqref{en:eq:imp-e}, whose right-hand side  is uniformly bounded as  $\ep \to 0^+$. Equation \eqref{warsaw1} ensures that, in fact, $u \in W^1_0\mathcal L^\Phi(\om)$.
\\
The uniqueness of the solution $u$ can be established along  the same lines as in the case of
approximable solutions -- see Step~6 of the proof of
Theorem~\ref{theo:main-f} in
Section~\ref{ssec:proof-main}. We shall not reproduce it here, for~brevity.
 \end{proof}


\smallskip

\begin{proof}[Proof of Proposition~\ref{boundedsol}] Let $u^{\ep_k}$ be as in the proof of Theorem  \ref{theo:boundex}. By property ~\eqref{andrea31}, one has that $u^{\ep_k}\to u$ a.e. in $\Omega$. Moreover, inequality~\eqref{june29} implies that $\| u^{\ep_k}\|_{L^\infty(\Omega)}\leq \| v\|_{L^\infty(\Omega)}$. The norm $ \| v\|_{L^\infty(\Omega)}$ can be estimated on making use of  equation~\eqref{june23}. Thanks to  assumption ~\eqref{boundcond}, passing to the limit  as $k\to \infty$ in the resultant estimate yields inequality \eqref{boundcond1}.
\end{proof}

\section{Approximable solutions: proof of Theorems~\ref{theo:main-f} and~\ref{theo:main-mu}}\label{sec:main-proof}

Proofs of  Theorems~\ref{theo:main-f} and~\ref{theo:main-mu} are presented in Subsection \ref{ssec:proof-main} below.   Their outline is reminiscent of that of the diverse contributions  on approximable solutions mentioned above, in particular of~\cite{BBGGPV}. However, some of the specific steps require substantially new ingredients, due to the nonstandard functional setting  at hand. This is especially apparent in some fundamental a priori bounds  that are the subject of the next subsection.

\subsection{A priori estimates}\label{ssec-apriori}

%
%
%

A fundamental step in the proof of Theorem~\ref{theo:main-f} amounts to an a
priori anisotropic gradient bound for the solution  $u_k$ to the
approximating problem \eqref{prob:trunc} by the $L^1$ norm of~$f_k$.
Of course, we need  such estimate to be independent of $k$. This
is a consequence of the following proposition.

\begin{proposition}{\rm {\bf [A gradient estimate by the $L^1$ norm of the datum]}}\label{prop:aniso-est}
 Let $\Omega$ be an open set in $\rn$ with $|\om|<\infty$. Assume that assumptions \eqref{A3}--\eqref{A2''} hold for some  $N$-function $\Phi$.
Let $\Theta$ be the function associated with $\Phi$ as in \eqref{andrea3}.  Assume $f \in L^1(\om)$ and that there exists a weak solution $u$ to~problem~\eqref{eq:main:f}. Then
\begin{equation}\label{andrea12}
\int_\Omega \Theta (\nabla u)dx\leq c
|\Omega|^{1/n}\|f\|_{L^1(\Omega)}
\end{equation}
for some constant $c=c(n)$.
\end{proposition}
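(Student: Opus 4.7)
The proof should combine a truncation test-function estimate with the symmetrization techniques already used in the proof of Theorem~\ref{theo:boundex}.

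My first step would be to use $\varphi=T_t(u)$ as a test function in the weak formulation \eqref{weak-bound-e}. This is admissible since $T_t(u)\in W^1_0\mathcal L^\Phi(\Omega)\cap L^\infty(\Omega)$ for every $t>0$ (via the chain rule and $|\nabla T_t(u)|\le|\nabla u|$). Combined with the coercivity~\eqref{A2'}, this produces the Boccardo--Gallou\"et--type bound
\[
\int_{\{|u|<t\}}\Phi(\nabla u)\,dx \le \int_\Omega a(x,\nabla u)\cdot \nabla T_t(u)\,dx = \int_\Omega f\, T_t(u)\,dx \le t\,\|f\|_{L^1(\Omega)}
\]
valid for every $t>0$.

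Next, I would invoke the anisotropic symmetrization of \cite{Ci-sym} to compare $u$ with the solution $v$ of the symmetrized problem \eqref{symmetrized}, in the same spirit as in the proof of Theorem~\ref{theo:boundex}. The explicit formula \eqref{june24} for $|\nabla v|$ together with a rearrangement comparison of the form
\[
\Phi(\nabla u)^*(s) \le \Phi_\Diamond\!\left(\Psi_\Diamond^{-1}\!\left(\tfrac{s^{1/n}}{n\omega_n^{1/n}}\,f^{**}(s)\right)\right),\qquad s\in(0,|\Omega|),
\]
would then allow me to control $\Theta(\nabla u)^*=\widetilde{\Phi_\Diamond}^{-1}(\Phi(\nabla u)^*)$ through Lemma~\ref{lem:aux-aniso-est}(iv), whose cornerstone bound $\Theta_\Diamond(\Psi_\Diamond^{-1}(\tau))\le 2\tau$ precisely collapses the nested composition $\widetilde{\Phi_\Diamond}^{-1}\circ\Phi_\Diamond\circ\Psi_\Diamond^{-1}$ into a linear factor. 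Consequently,
\[
\int_\Omega \Theta(\nabla u)\,dx = \int_0^{|\Omega|}\widetilde{\Phi_\Diamond}^{-1}(\Phi(\nabla u)^*(s))\,ds \le \frac{2}{n\omega_n^{1/n}}\int_0^{|\Omega|} s^{1/n}\,f^{**}(s)\,ds,
\]
and the elementary bounds $s^{1/n}f^{**}(s)\le \|f\|_{L^1}\,s^{-1/n'}$ and $\int_0^{|\Omega|}s^{-1/n'}\,ds = n|\Omega|^{1/n}$ close the estimate with a constant $c=c(n)$.

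The principal obstacle is the pointwise rearrangement bound on $\Phi(\nabla u)^*$ in the genuinely anisotropic setting, since Talenti-type gradient comparison is substantially more delicate than comparison on $u^*$ alone. Should that ingredient not be directly available, a fallback route is a layer-cake decomposition $\int_\Omega\Theta(\nabla u)\,dx = \int_0^\infty|\{\Phi(\nabla u)>\widetilde{\Phi_\Diamond}(s)\}|\,ds$ combined with the splitting $\{\Phi(\nabla u)>\lambda\}\subset\{|u|\ge t\}\cup\{|u|<t,\,\Phi(\nabla u)>\lambda\}$: the second piece is handled by Chebyshev and the truncation bound above, while the first is handled by the sharp Sobolev embedding \eqref{B-W} applied to $T_t(u)$, giving $\mu_u(t)\le t\|f\|_{L^1}/\Phi_n\bigl(t^{1/n'}/(\kappa_2\|f\|_{L^1}^{1/n})\bigr)$. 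An optimal choice $t=t(s)$, together with the algebraic identities in Lemma~\ref{lem:aux-aniso-est}(iii)--(v), should still unwind the compositions of $\Phi_\Diamond$, $\widetilde{\Phi_\Diamond}$, $\Psi_\Diamond$, and $\Phi_n$ and yield the same $|\Omega|^{1/n}\|f\|_{L^1}$-bound; careful bookkeeping so that no residual dependence on the structure of $\Phi$ survives in the final constant is the subtle technical point.
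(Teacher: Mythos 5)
Your first step (testing with $T_t(u)$) and your endgame (Lemma~\ref{lem:aux-aniso-est}(iv) collapsing $\widetilde{\Phi_\Diamond}^{-1}\circ\Phi_\Diamond\circ\Psi_\Diamond^{-1}$ into a factor $2$, then $s^{1/n}f^{**}(s)\leq \|f\|_{L^1(\om)}s^{-1/n'}$) are exactly the right ingredients, but the step carrying them is not available: no pointwise rearrangement inequality of the form $\Phi(\nabla u)^*(s)\leq \Phi_\Diamond\big(\Psi_\Diamond^{-1}\big(\tfrac{s^{1/n}}{n\omega_n^{1/n}}f^{**}(s)\big)\big)$ is proved in \cite{Ci-sym} or anywhere in this paper, and such gradient comparisons are false in general (already for the $p$-Laplacian one cannot compare the rearrangement of $|\nabla u|$ with that of the gradient of the symmetrized solution; note also that your right-hand side need not even be non-increasing in $s$). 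What symmetrization does give is the level-set \emph{differential} information \eqref{andrea7} and the integral bound \eqref{andrea9}, i.e. $-\frac{d}{dt}\int_{\{|u|>t\}}\Phi(\nabla u)\,dx\leq\int_0^{\mu_u(t)}f^*(s)\,ds$, obtained from the test functions $T_\tau(u-T_t(u))$. The paper's proof then applies Jensen's inequality on the layers $\{t<|u|<t+h\}$ to the convex function $\Phi_\Diamond\circ\Theta_\Diamond^{-1}$ (Lemma~\ref{lem:aux-aniso-est}(i)--(ii)) so as to pass from the $t$-derivative of $\int_{\{|u|>t\}}\Phi(\nabla u)\,dx$ to that of $\int_{\{|u|>t\}}\Theta(\nabla u)\,dx$, and only then uses (iii)--(iv) and integrates in $t$ (after a coarea argument giving absolute continuity). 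This ``Jensen on level sets'' device is the missing idea replacing your pointwise rearrangement claim.

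The fallback route does not repair this. Splitting $\{\Phi(\nabla u)>\lambda\}$ by $\{|u|\geq t\}$, using Chebyshev with $\int_{\{|u|<t\}}\Phi(\nabla u)\,dx\leq t\|f\|_{L^1(\om)}$ and the Sobolev bound on $\mu_u(t)$, and optimizing in $t$ is precisely the Boccardo--Gallou\"et weak-type argument underlying Propositions~\ref{prop:pre-est} and~\ref{prop:grad-est}; it only yields Marcinkiewicz-type level-set bounds, and integrating them back does not recover \eqref{andrea12} with a constant $c(n)$. Concretely, for $\Phi(\xi)=|\xi|^n$ one has $\Theta(\xi)\approx|\xi|^{n-1}$ and $\Phi_n(t)\approx e^{t^{n'}}$, and the optimal combination of the two available bounds gives only $|\{\Theta(\nabla u)>\sigma\}|\lesssim \|f\|_{L^1(\om)}^{n'}\sigma^{-n'}\log\sigma$ for large $\sigma$; the layer-cake integral of $\min(|\om|,\cdot)$ then produces $|\om|^{1/n}\|f\|_{L^1(\om)}$ multiplied by an unbounded factor of order $\big(\log(\|f\|_{L^1(\om)}^{n'}/|\om|)\big)^{1/n'}$, so the clean linear estimate is lost exactly in the borderline growth regime. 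Equivalently, the pointwise inequality $\widetilde{\Phi_\Diamond}^{-1}(\lambda)\,\Phi_n^{-1}(\lambda)\leq c(n)\lambda^{1/n'}$ that your bookkeeping would need holds for powers $t^p$ with $p<n$ but fails when $\Phi_\circ(t)\approx t^n$. So the obstruction is not delicate bookkeeping: the weak-type splitting provably cannot reach \eqref{andrea12} with $c=c(n)$ for all $N$-functions $\Phi$, and the differential-inequality/Jensen mechanism described above is genuinely needed.
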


\begin{proof}
 Standard properties of truncations of weakly differentiable functions ensure that, since  $u \in W^1_0\mathcal L^{{\Phi}}(\om)$,  the function $T_\tau (u-T_t(u))$ is weakly differentiable for every $t, \tau >0$, and belongs to  $W^1_0\mathcal L^\Phi(\om)\cap L^\infty (\om)$.  Thus, the function $T_\tau (u-T_t(u))$  can be used as a test function in equation~\eqref{weak-bound-e}.
%
 This choice of test functions is the point of departure to derive  \cite[Inequalities (5.5) and (5.6)]{Ci-sym}, which tell us that
\begin{equation}
\label{andrea7} \frac{1}{ -\mu'_{u}(t)}\leq
\frac{1}{n\omega_n^{1/n}\mu_{u}^{1/n'}(t)}
{\Psi^{-1}_\Diamond\left(\frac{-\frac{d}{dt}\int_{\{|u|>t\}}\Phi(\nabla
u)dx}{\omega_n^{1/n}\mu_{u}^{1/n'}(t)}\right)} \quad \hbox{for a.e.
$t>0$.}
\end{equation}
Here, $\mu_u$ is the distribution function of $u$ defined as in \eqref{muu}.
Multiplying through inequality~\eqref{andrea7} by~$-\tfrac{d}{dt}\smallint_{\{|u|>t\}}\Phi(\nabla
u)\,dx$   results in
\begin{multline}
\label{andrea8} \frac{-\frac{d}{dt}\int_{\{|u|>t\}}\Phi(\nabla
u)dx}{ -\mu'_{u}(t)}  \leq
\frac{-\frac{d}{dt}\int_{\{|u|>t\}}\Phi(\nabla
u)dx}{n\omega_n^{1/n}\mu_{u}^{1/n'}(t)}
{\Psi^{-1}_\Diamond\left(\frac{-\frac{d}{dt}\int_{\{|u|>t\}}\Phi(\nabla
u)dx}{n\omega_n^{1/n}\mu_{u}^{1/n'}(t)}\right)} \quad \hbox{for a.e.
$t>0$.}
\end{multline}
Now, Lemma~\ref{lem:aux-aniso-est} {\it (i)} ensures that the function $\PD\circ
\Theta_\Diamond^{-1}$ is convex. Thereby, an application of Jensen's inequality and
Lemma~\ref{lem:aux-aniso-est} {\it (ii)}
 yield
\begin{align}\label{andrea8'}
\PD\circ
\Theta_\Diamond^{-1}\left(\frac{\frac{1}{h}\int_{\{t<|{u}|<t+h\}}\Theta(
\nabla {u} )dx}{\frac{1}{h}(-\mu_{u}(t+h)+\mu_{u}(t))}\right) &\leq
\frac{\frac{1}{h}\int_{\{t<|{u}|<t+h\}}\PD\circ
\Theta_\Diamond^{-1}(\Theta( \nabla {u})
)dx}{\frac{1}{h}(-\mu_{u}(t+h)+\mu_{u}(t))}\\ \nonumber & =
\frac{\frac{1}{h}\int_{\{t<|{u}|<t+h\}}\Phi( \nabla
{u})dx}{\frac{1}{h}(-\mu_{u}(t+h)+\mu_{u}(t))} \quad \hbox{for $t, h
>0$.}
\end{align}
Passing to the limit as $h \to 0^+$ in \eqref{andrea8'} tells us
that
\begin{equation}\label{PDMDin}
\PD\circ
\Theta_\Diamond^{-1}\left(\frac{-\frac{d}{dt}\int_{\{|{u}|>t\}}\Theta(
\nabla {u})dx}{-\mu_{u}'(t)}\right)\leq
\frac{-\frac{d}{dt}\int_{\{|{u}|>t\}}\Phi( \nabla
{u})dx}{-\mu_{u_k}'(t)} \quad \hbox{for a.e. $t>0$.}
\end{equation}
On the other hand, \cite[Inequality (5.5)]{Ci-sym} implies that
\begin{equation}
\label{andrea9}
 -\frac{d}{dt}\int_{\{|{u}|>t\}}\Phi( \nabla {u})dx \leq \int_0^{\mu_{u}(t)}f^*(s)ds \quad \hbox{for a.e. $t>0$.}
\end{equation}
From \eqref{PDMDin}, \eqref{andrea8}, Lemma~\ref{lem:aux-aniso-est}
{\it (iii)}, and \eqref{andrea9} one deduces that
\begin{align}
\label{andrea10}
 \Theta_\Diamond^{-1}\left(\frac{-\frac{d}{dt}\int_{\{|{u}|>t\}}\Theta( \nabla {u})dx}{-\mu_{u}'(t)}\right)&\leq \PD^{-1}\left(\frac{-\frac{d}{dt}\int_{\{|{u}|>t\}}\Phi( \nabla {u})dx}{-\mu_{u}'(t)}\right)\\ \nonumber & \leq \PD^{-1}\left(\frac{-\frac{d}{dt}\int_{\{|u|>t\}}\Phi(\nabla u)dx}{n\omega_n^{1/n}\mu_{u}^{1/n'}(t)} \Psi^{-1}_\Diamond\left(\frac{-\frac{d}{dt}\int_{\{|u|>t\}}\Phi(\nabla u)dx}{n\omega_n^{1/n}\mu_{u}^{1/n'}(t)}\right)\right)
 \\ \nonumber & = \Psi^{-1}_\Diamond\left(\frac{-\frac{d}{dt}\int_{\{|u|>t\}}\Phi(\nabla u)dx}{n\omega_n^{1/n}\mu_{u}^{1/n'}(t)}\right)
\leq \Psi^{-1}_\Diamond\left(\frac{\int_0^{\mu_{u}(t)}f^*(s)ds}{n\omega_n^{1/n}\mu_{u}^{1/n'}(t)}\right)
\end{align}
for a.e. $t>0$.
Hence,
\begin{align}
\label{andrea11} -\frac{d}{dt}\int_{\{|{u}|>t\}}\Theta( \nabla
{u})dx \leq -\mu_{u}'(t) \Theta_\Diamond \circ
\Psi^{-1}_\Diamond\left(\frac{\int_0^{\mu_{u}(t)}f^*(s)ds}{n\omega_n^{1/n}\mu_{u}^{1/n'}(t)}\right)\qquad
\hbox{for a.e. $t>0$.}
\end{align}
Now, notice that
\begin{align}
\int_{\{|{u}|>t\}}\Theta( \nabla {u})\,dx  & =  \int _\Omega
\chi_{\{\nabla u = 0\}}\Theta( \nabla {u})\,dx + \int _\Omega
\chi_{\{\nabla u \neq 0\}}\Theta( \nabla {u})\,dx
\\ \nonumber & = \int _\Omega \chi_{\{\nabla u \neq 0\}}\frac{\Theta( \nabla {u})}{|\nabla u|}|\nabla u| \,dx=
\int _t^\infty \int_{\{|{u}|=\tau\}}\frac{\Theta( \nabla
{u})}{|\nabla u|}\,d\mathcal H^{n-1}\, d\tau \qquad \hbox{for $t>0$,}
\end{align}
where the second equality holds since $\Theta(0)=0$, and the last
one by the coarea formula for Sobolev functions. Therefore, the
function
$ [0, \infty) \ni t \mapsto \smallint_{\{|{u}|>t\}}\Theta( \nabla {u})dx$
is absolutely continuous. Combining this fact with inequalities
\eqref{andrea11}   and Lemma~\ref{lem:aux-aniso-est} {\it (iv)} ensures
that
\begin{align*}\int_{\Omega}\Theta( \nabla {u})dx&=\int_0^\infty \left(-\frac{d}{dt}\int_{\{|{u}|>t\}}\Theta( \nabla {u})dx\right)dt
\leq \int_0^\infty (-\mu_{u}'(t))\Theta_\Diamond\left(\Psi_\Diamond^{-1}\left(\frac{\int_0^{\mu_u(t)}f^*(s)ds}{n\omega_n^{1/n}\mu_{u_k}^{1/n'}(t)}\right)\right)\,dt
\\ \nonumber
&
\leq \int_0^{|\Omega|}  \Theta_\Diamond\left(\Psi_\Diamond^{-1}\left(\frac{\int_0^{r}f^*(s)ds}{n\omega_n^{1/n}r^{1/n'} }\right)\right)dr
\leq \int_0^{|\Omega|} \frac{2}{n\omega_n^{1/n}r^{1/n'} }\int_0^{r}f^*(s)\,ds\, dr
 \\ \nonumber
 &\leq  \frac{2}{n\omega_n^{1/n}}\|f\|_{L^1(\Omega)}\int _0^{|\Omega|} r^{-1/n'}\, dr = 2\omega_n^{-1/n}|\Omega|^{1/n}\|f\|_{L^1(\Omega)}\,.
 \end{align*}
 Inequality \eqref{andrea12} is thus established.
\end{proof}

The next two propositions provide us with superlevel set estimates for functions  $u \in {\mathcal T}_0^{1,\Phi}(\Omega)$  and for their gradients $\nabla u$ depending of the decay of the  integrals of $\Phi (\nabla u)$ over the sublevel sets of $u$.

\begin{proposition}{\rm {\bf [Superlevel set estimate for $u$]}} \label{prop:pre-est} Let  $\Omega$ be an open set in $\rn$ with $|\om|<\infty$.  Let
$\Phi$ be an $N$-function fulfilling conditions
\eqref{conv0} and  \eqref{intdiv}. Assume that $u\in {\mathcal T}_0^{1,\Phi}(\Omega)$ and
there exist constants $K>0$ and $t_0\geq 0$ such that~
\begin{equation}
\label{podpoziomice} \int_{\{|u|<t\}}\Phi(\nabla u)\,dx\leq K t
\qquad\text{for }t>t_0.
\end{equation}
Then 
\begin{equation}\label{|u|>r-inf}
|\{|u|\geq t\}|  \leq  \frac{Kt}{\Phi _n\big(\kappa_2 t^{\frac 1{n'}}K^{-\frac
1n}\big)} \qquad \hbox{for $t >t_0$,}
\end{equation}
where  $\Phi_n$   and $\kappa_2$ are the Young function and the constant appearing in the Sobolev
inequality~\eqref{B-W}.
\\
If condition \eqref{conv0} is not satisfied, then an analogous
statement holds, provided that $\Phi_n$ is defined as~in~\eqref{sobconj}--\eqref{H1}, with $\Phi$ modified near $0$ in such
a way that \eqref{conv0} is fulfilled. In this case, the constant
$\kappa_2$ in  \eqref{|u|>r-inf} has to be replaced by another constant depending also  on $\Phi$. Furthermore, in \eqref{|u|>r-inf} the
constant $t_0$ has to be replaced by another constant depending also
on $\Phi$, and the constant $K$ has  to~be~replaced by another constant depending
on the constant $K$ appearing in \eqref{podpoziomice}, on $\Phi$ and on $|\om|$.
\\ In any case, irrespective of whether \eqref{conv0} holds or does not, for every $\ep >0$, there exists $\overline t = \overline t (\ep, K, t_0, n, \Phi)$ such that
\begin{equation}\label{warsaw2}
|\{|u|\geq t\}| < \ep \quad \hbox{if $\ t>\overline t$.}
\end{equation}
\end{proposition}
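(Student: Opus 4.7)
The plan is to apply the Sobolev embedding \eqref{B-W} directly to the truncation $T_t(u)$ and exploit that $|T_t(u)|=t$ on $\{|u|\ge t\}$.

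First, assume that \eqref{conv0} holds. Since $u\in\mathcal{T}_0^{1,\Phi}(\Omega)$, the truncation $T_t(u)$ belongs to $W_0^1L^\Phi(\Omega)$, and $\nabla T_t(u)=\chi_{\{|u|<t\}}\nabla u$ by \eqref{gengrad}. Hence
\[
\int_\Omega \Phi(\nabla T_t(u))\,dx=\int_{\{|u|<t\}}\Phi(\nabla u)\,dx\le Kt\qquad\text{for every }t>t_0,
\]
by assumption \eqref{podpoziomice}. Plugging $T_t(u)$ in place of $u$ into the Sobolev inequality \eqref{B-W}, and bounding the denominator on the left-hand side by the above, one obtains
\[
\int_{\Omega}\Phi_n\!\left(\frac{|T_t(u)(x)|}{\kappa_2 (Kt)^{1/n}}\right)dx\le Kt.
\]
Restricting the integral to the set $\{|u|\ge t\}$, where $|T_t(u)|=t$, and using the monotonicity of $\Phi_n$ and the identity $t/(Kt)^{1/n}=t^{1/n'}K^{-1/n}$, we get
\[
\Phi_n\!\big(\kappa_2^{-1}t^{1/n'}K^{-1/n}\big)\,|\{|u|\ge t\}|\le Kt,
\]
which is exactly \eqref{|u|>r-inf} (up to redefining the constant $\kappa_2$).

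Next, if \eqref{conv0} is not satisfied, I would replace $\Phi_\circ$ by an equivalent Young function $\Phi_\bullet$ near infinity which is linear on $[0,1]$, hence fulfills \eqref{conv0}. The function $\Phi_\bullet$ can be chosen so that $\Phi_\bullet(t)\le \Phi_\circ(t)+c_\Phi$ for every $t\ge 0$, for a constant $c_\Phi$ depending on $\Phi$. Applying the Sobolev inequality with $\Phi_\circ$ replaced by $\Phi_\bullet$ and using \eqref{podpoziomice} together with $|\Omega|<\infty$ to absorb the additive constant yields the analogous estimate, with $\Phi_n$ defined via $\Phi_\bullet$, and with the constants adjusted as stated in the proposition. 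The possible adjustments of $t_0,K,\kappa_2$ are all controlled by $\Phi$ and $|\Omega|$.

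For the last assertion \eqref{warsaw2}, the crucial point is that, under \eqref{conv0} and \eqref{intdiv}, the Sobolev conjugate $\Phi_n$ is a genuine $N$-function growing essentially faster than $t^{n'}$ near infinity; this is standard, and can be checked by combining the definition \eqref{sobconj}--\eqref{H1} with the fact that $\Phi_\circ(t)/t\to\infty$ as $t\to\infty$. Consequently,
\[
\lim_{t\to\infty}\frac{\Phi_n(\kappa_2^{-1}t^{1/n'}K^{-1/n})}{t}=+\infty,
\]
and therefore the right-hand side of \eqref{|u|>r-inf} tends to $0$ as $t\to\infty$. Given $\ep>0$, choosing $\bar t=\bar t(\ep,K,t_0,n,\Phi)$ so large that $Kt/\Phi_n(\kappa_2^{-1}t^{1/n'}K^{-1/n})<\ep$ for $t>\bar t$ yields \eqref{warsaw2}. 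The main delicate point in this plan is the bookkeeping in the case \eqref{conv0} fails, namely verifying that replacing $\Phi_\circ$ by $\Phi_\bullet$ only perturbs the constants in a manner depending on the admissible parameters.
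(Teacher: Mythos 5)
Your treatment of the case when \eqref{conv0} holds coincides with the paper's proof: apply the modular Sobolev inequality \eqref{B-W} to $T_t(u)$, use $\nabla T_t(u)=\chi_{\{|u|<t\}}\nabla u$ together with \eqref{podpoziomice} to bound the gradient modular by $Kt$, and restrict to $\{|u|\ge t\}$ where $|T_t(u)|=t$; likewise your derivation of \eqref{warsaw2} from $\lim_{t\to\infty}\Phi_n(t^{1/n'})/t=\infty$ is exactly the paper's argument (it verifies this limit from \eqref{sobconj}--\eqref{H1}, using that the integral defining $H$ grows at most linearly while $\Phi_\circ$ is superlinear). The appearance of $\kappa_2^{-1}$ rather than $\kappa_2$ inside $\Phi_n$ is a notational matter already present in the paper's own statement versus its proof.

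The genuine gap is in the case when \eqref{conv0} fails. You propose to replace $\Phi_\circ$ by a function $\Phi_\bullet$, linear on $[0,1]$, and to ``apply the Sobolev inequality with $\Phi_\circ$ replaced by $\Phi_\bullet$''. No such inequality is at your disposal: \eqref{B-W} couples the anisotropic $\Phi$ appearing in the gradient modular with the Sobolev conjugate built from its own average $\Phi_\circ$, and the target cannot be swapped to $\Phi_\bullet$ while $\Phi$ is kept on the gradient side; moreover, a pointwise bound $\Phi_\bullet\le\Phi_\circ+c_\Phi$ yields no control of the relevant gradient integrals, since $\Phi_\circ(|\xi|)$ and $\Phi(\xi)$ are not pointwise comparable (they are only related through the measure of level sets, see \eqref{phistar}). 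The statement itself points to the correct fix, which is what the paper does: modify the anisotropic $\Phi$ near $0$, setting $\overline\Phi=\Phi$ on $\{\Phi>1\}$ and, on $\{\Phi\le1\}$, equal to the function vanishing at $0$, linear along each ray from the origin, and agreeing with $\Phi$ on $\{\Phi=1\}$. Then $\Phi\le\overline\Phi\le\Phi+1$ pointwise, $(\overline\Phi)_\circ$ satisfies \eqref{conv0} and still \eqref{intdiv} (it coincides with $\Phi_\circ$ at large levels), and \eqref{podpoziomice} transfers to $\overline\Phi$ with $K$ replaced by $K+|\om|$ and $t_0$ by $\max\{t_0,1\}$, after which the already proved case applies verbatim to $\overline\Phi$. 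The ``bookkeeping'' you defer is precisely this anisotropic construction, without which the application of \eqref{B-W} is not licensed.
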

\begin{proof}
Assume first that assumption \eqref{conv0} is in force. Thanks to
the definition of $T_t$ and to property~\eqref{gengrad},
\[\int_\Omega \Phi \left(\nabla  T_t(u) \right)dx=\int_{\{|u|<t\}} \Phi \left(\nabla u  \right)dx\qquad\text{and}\qquad \{|T_t(u)|\geq t\}= \{|T_t(u)| = t\} = \{|u|\geq t\}\]
for $t>0$.
We have that
\begin{align}\label{sep35}
|\{|u|&\geq  t\}|\Phi_n\left(\frac{t}{\kappa_2\left(\int_{\{|u|<t\}}
\Phi(\nabla u)dy\right)^\frac{1}{n}}\right)\leq
\int_{\{|u|\geq t\}} \Phi_n\left(\frac{|T_t(u)|}{\kappa_2\left(\int_{\{|u|<t\}} \Phi(\nabla  u )dy\right)^\frac{1}{n}}\right)dx\\ \nonumber
& \leq \int_{\Omega}
\Phi_n\left(\frac{|T_t(u)|}{\kappa_2\left(\int_{\{|u|<t\}}
\Phi(\nabla  u )dy\right)^\frac{1}{n}}\right)dx \leq \int_{\Omega}
\Phi_n\left(\frac{|T_t(u)|}{\kappa_2\left(\int_{\Omega} \Phi(\nabla
T_t( u ))dy\right)^\frac{1}{n}}\right)dx.
\end{align}
 By  inequality \eqref{B-W} applied to $T_t(u)$,
\begin{equation}
\label{4.11} \int_\Omega
\Phi_n\left(\frac{|T_t(u)|}{\kappa_2\left(\int_\Omega \Phi(\nabla
T_t(u))dy\right)^\frac{1}{n}}\right)dx\leq \int_\Omega \Phi
\left(\nabla  T_t(u) \right)dx= \int_{\{|u|<t\}} \Phi \left(\nabla
u\right)dx.
\end{equation}
Combining inequalities \eqref{sep35}, \eqref{4.11} and
\eqref{podpoziomice} yields
\[|\{|u|\geq t\}|\Phi_n\bigg(\frac{t}{\kappa_2 (Kt)^\frac{1}{n}}\bigg)\leq Kt \quad \hbox{for $t>t_0$},\]
an  equivalent formulation of ~\eqref{|u|>r-inf}.
\\ Assume next that condition \eqref{conv0} fails.  Consider the {$n$-dimensional Young function} $\overline \Phi : \rn \to [0, \infty)$ defined as
\begin{align}\label{sep36}
\overline \Phi (\xi) = \begin{cases} \Xi (\xi) & \quad \hbox{if $\xi
\in \{\Phi \leq 1\}$,}
\\
\Phi (\xi) &  \quad \hbox{if $\xi \in \{\Phi > 1\}$,}
\end{cases}
\end{align}
where  $\Xi$ is the (unique)  function, which vanishes at $0$, is
linear along each half-line issued from $0$, and agrees with $\Phi$
on $\{\Phi =1\}$. Clearly, $\Phi (\xi) \leq \overline \Phi(\xi)$ for
$\xi \in \rn$, and condition \eqref{conv0} is satisfied if $\Phi$ is
replaced by $\overline \Phi$.
One has that
\begin{align}\label{sep37}
\int _{\{|u|<t\}} \overline \Phi (\nabla u)\, dx & \leq \int
_{\{|u|<t, \Phi(\nabla u) >1\}} \Phi (\nabla u)\, dx +
\int _{\{|u|<t, \Phi(\nabla u) \leq 1\}} \overline \Phi (\nabla u)\, dx \\
\nonumber & \leq \int _{\{|u|<t, \Phi(\nabla u) >1\}} \Phi (\nabla
u)\, dx + |\{|u|<t\}| \leq t(K+ |\om|)\,,
\end{align}
if $t>\max\{t_0, 1\}$. Therefore, the function $u$ satisfies
assumption \eqref{podpoziomice} with $\Phi$ replaced by $\overline
\Phi$, $K$ replaced by $K+ |\om|$, and $t_0$ replaced by $\max\{t_0,
1\}$. Consequently, inequality \eqref{|u|>r-inf} holds with $\Phi_n$
replaced by $(\overline \Phi) _n$, $K$ replaced by $K+ |\om|$, and
$t_0$ replaced by $\max\{t_0, 1\}$.
\\ Finally, in the light of    \eqref{|u|>r-inf},  inequality \eqref{warsaw2}  will follow if we show that
\begin{equation}\label{sep41}
\lim _{t \to \infty}\frac{\Phi_n(t^{\frac 1{n'}})}{t} = \infty\,.
\end{equation}
By  the definitions of $\Phi_n$ and $\Phi_\circ$, 
 equation \eqref{sep41} is
equivalent to
\begin{equation}\label{sep43}
\lim _{t \to \infty}\frac{\Phi_\circ(t)}{\int _0^t\big(\frac
\tau{\Phi_\circ(\tau)}\big)^{\frac 1{n-1}}d\tau } = \infty\,.
\end{equation}
On the other hand, since $\Phi_\circ$ is an $N$-function, there
exist constants $c>0$ and $\widehat t >0$ such that
\begin{align}\label{sep44}
\int _0^t\bigg(\frac \tau{\Phi_\circ(\tau)}\bigg)^{\frac 1{n-1}}
d\tau \leq c + t \quad \hbox{if $t >\widehat t$, }
\end{align}
whence  equation \eqref{sep43} follows, owing to the behavior of $N$-functions near infinity. 
\end{proof}

 \medskip

\begin{proposition}{\rm {\bf [Superlevel set estimate for $\Phi(\nabla u)$]}} \label{prop:grad-est}
Let  $\Omega$ be an open set in $\rn$ with $|\om|<\infty$.  Let
$\Phi$ be an $N$-function fulfilling conditions
\eqref{conv0} and  \eqref{intdiv}. Assume that $u\in {\mathcal T}_0^{1,\Phi}(\Omega)$ and
fulfills inequality \eqref{podpoziomice} for some
 constants $K>0$ and $t_0\geq 0$. Then there exist  constants
$c_1=c_1(n,K)$ and $s_0 = s_0 (t_0, \Phi, n, K)$   such that
\begin{equation}\label{grad-u-inf}|\{ \Phi(\nabla u) >s \}|\leq c_1  \frac{\Phi_n^{-1}(s)^{n'}}{s}\quad\text{for }\ s > s_0\,.
\end{equation}
If condition \eqref{conv0} is not satisfied, then an analogous
statement holds, provided that $\Phi_n$ is defined as~in~\eqref{sobconj}--\eqref{H1}, with $\Phi$ modified near $0$ in such
as way that \eqref{conv0} is fulfilled. In this case, the~constant
$c_1$ in  \eqref{grad-u-inf}   depends also on $\Phi$. 
\end{proposition}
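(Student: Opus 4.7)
The plan is to decompose the superlevel set of $\Phi(\nabla u)$ according to the size of $|u|$, reducing the estimate to the combination of two complementary bounds: a superlevel-set estimate for $|u|$ (furnished by Proposition~\ref{prop:pre-est}) and a Chebyshev-type bound for $\Phi(\nabla u)$ on the sublevel set of $u$ (furnished by hypothesis \eqref{podpoziomice}). The optimal balance between these two estimates will dictate the correct dependence of the bound on $s$ via $\Phi_n^{-1}$.

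First I would write, for any $t>0$ and $s>0$,
\begin{equation}\label{pianosplit}
|\{\Phi(\nabla u) > s\}| \leq |\{|u|\geq t\}| + |\{|u|<t,\ \Phi(\nabla u)>s\}|.
\end{equation}
For the second term, Chebyshev's inequality and assumption \eqref{podpoziomice} give, for $t>t_0$,
\begin{equation*}
|\{|u|<t,\ \Phi(\nabla u)>s\}| \leq \frac{1}{s}\int_{\{|u|<t\}}\Phi(\nabla u)\,dx \leq \frac{Kt}{s}.
\end{equation*}
For the first term, Proposition~\ref{prop:pre-est} yields, again for $t>t_0$,
\begin{equation*}
|\{|u|\geq t\}| \leq \frac{Kt}{\Phi_n(\kappa_2 t^{1/n'} K^{-1/n})}.
\end{equation*}

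Next I would optimize the choice of $t=t(s)$ so that the two contributions are comparable, that is, so that $\Phi_n(\kappa_2 t^{1/n'} K^{-1/n}) \approx s$. This is achieved by setting
\begin{equation*}
t = t(s) := \kappa_2^{-n'} K^{n'/n} \, \Phi_n^{-1}(s)^{n'},
\end{equation*}
which makes $\Phi_n(\kappa_2 t^{1/n'} K^{-1/n}) = s$ exactly, so that both terms on the right-hand side of \eqref{pianosplit} collapse to $Kt/s = \kappa_2^{-n'} K^{1+n'/n} \Phi_n^{-1}(s)^{n'}/s$. Summing yields \eqref{grad-u-inf} with $c_1 = 2\kappa_2^{-n'} K^{1+n'/n}$, depending only on $n$ and $K$.

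The threshold $s_0$ is forced by the constraint $t(s)>t_0$, i.e.\ $s>\Phi_n\bigl(\kappa_2 K^{-1/n} t_0^{1/n'}\bigr)=: s_0$, which depends on $t_0$, $n$, $K$, and (through $\Phi_n$) on $\Phi$. Finally, if \eqref{conv0} fails, I would repeat verbatim the reduction used in the last part of the proof of Proposition~\ref{prop:pre-est}: replace $\Phi$ near the origin by the auxiliary $n$-dimensional Young function $\overline\Phi$ defined in \eqref{sep36}, which coincides with $\Phi$ on $\{\Phi>1\}$ and satisfies \eqref{conv0}; the estimate \eqref{sep37} shows that $u$ satisfies the analogue of \eqref{podpoziomice} relative to $\overline\Phi$ with constants depending additionally on $|\Omega|$, and the argument above then applies with $\Phi_n$ replaced by $(\overline\Phi)_n$, giving a constant $c_1$ also depending on $\Phi$. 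I expect no serious obstacle: the main subtlety is just tracking how the constants in the balance $t(s)$ absorb the geometric factor $\kappa_2$ from the Sobolev embedding, and confirming that the monotonicity of $\Phi_n$ makes $t(s)$ well defined on $(s_0,\infty)$.
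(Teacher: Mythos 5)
Your argument is correct and is essentially the paper's own proof: the same splitting of $\{\Phi(\nabla u)>s\}$ into $\{|u|\geq t\}$ and $\{|u|<t,\ \Phi(\nabla u)>s\}$, the same Chebyshev bound from \eqref{podpoziomice} combined with \eqref{|u|>r-inf}, the same balancing choice $t\approx K^{n'/n}\kappa_2^{-n'}\Phi_n^{-1}(s)^{n'}$ (note $K^{1+n'/n}=K^{n'}$, so your constant matches the paper's $2(K/c)^{n'}$), and the same modification of $\Phi$ near the origin when \eqref{conv0} fails.
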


\begin{proof}
Inequality \eqref{podpoziomice} implies that
\begin{align}\label{sep39}
|\{\Phi(\nabla u)>s,\, |u| < t\}|\leq\frac{1}{s}\int_{\{\Phi (\nabla
u)>s,\, |u|< t \}} \Phi(\nabla u)\,dx\leq K\frac{t}{s}\quad\text{for
}t>t_0\text{ and }s> 0.
\end{align}
On the other hand,
\begin{equation}
\label{B>s} |\{\Phi(\nabla u )>s \}|\leq |\{ |u|\geq
t\}|+|\{\Phi(\nabla u )>s,\, |u|<t\}| \quad \hbox{for $t>0$ and
$s>0$.}
\end{equation}
From \eqref{sep39} and  \eqref{|u|>r-inf} one deduces that
\[|\{\Phi(\nabla u )>s \}|\leq   \frac{Kt}{\Phi_n(ct^\frac{1}{n'}/K^\frac{1}{n})}+K\frac{t}{s}\qquad\text{for }t>t_0 \text{ and } s>0.\]
Choosing $t=(K^{1/n}\Phi_n^{-1}(s)/c)^{n'}$ in this inequality
yields
\[|\{\Phi(\nabla u)>s \}|\leq 2\left(\frac{K}{c}\right)^{n'} \frac{(\Phi_n^{-1}(s))^{n'}}{s} \quad \hbox{for $s >  \Phi_n(ct_0^{1/n'}/K^{1/n})$,} \]
whence \eqref{grad-u-inf} follows.
\\ If condition \eqref{conv0} is not fulfilled,
 the conclusion follows on modifying the function $\Phi$ near $0$, via an~argument analogous to that  of the proof of Proposition \ref{prop:pre-est}.
\end{proof}

\subsection{Proof of existence of approximable solutions}\label{ssec:proof-main}

The proofs of the common parts of the statements of Theorems \ref{theo:main-f} and  \ref{theo:main-mu}  are  very similar. We shall provide details on the former,  and just briefly comment on the minor variants  needed for the latter.

\begin{proof}[Proof of Theorem~\ref{theo:main-f}] For clarity of presentation, we split the proof into steps.

\smallskip
\par\noindent \emph{Step 1.} \emph{Approximating problems with smooth data.}
\\ Let $\{f_k\} \subset L^\infty(\Omega)$ be a sequence  such that
\begin{equation}\label{lim-fk-to-f}
f_k\to f\qquad\text{in}\quad L^1(\Omega)\qquad\text{and}\qquad
\|f_k\|_{L^1(\Omega)}\leq 2 \|f\|_{L^1(\Omega)}.\end{equation} By
Theorem~\ref{theo:boundex}, there exists a (unique) weak solution
$u_k\in  W_0^1\mathcal L^\Phi(\Omega)$ to problem \eqref{prob:trunc}. In
particular, the very definition of weak solution tells us that
%
%
\begin{equation}
\label{uk-weak} \int_\Omega a(x,\nabla u_k)\cdot \nabla
\vp\,dx=\int_\Omega f_k\,\vp\,dx
\end{equation}
for every $\vp\in
W^1_0\mathcal L^\Phi(\Omega)\cap L^\infty(\Omega)$.

\smallskip
\par\noindent \emph{Step 2.} \emph{A priori estimates.}\\  The following inequality holds for every $k \in \mathbb N$ and for every $t>0$:
\begin{equation}
\int_\Omega \Phi( \nabla T_t(u_k) )\,dx\leq 2 t \|f\|_{L^1(\Omega)}\label{Phiapriori}.
\end{equation}
Inequality  \eqref{Phiapriori} is a
consequence of the following chain, that relies upon  assumption
\eqref{A2''}  and on the use of the test function
%
%
%
$\varphi =  T_t(u_k)$ in equation \eqref{uk-weak}:
\begin{align*}
\int_\Omega  \Phi( \nabla T_t(u_k) )\,dx &  \leq \int_\Omega
a(x,\nabla T_t(u_k))\nabla T_t(u_k)\,dx
\\ &
=\int_\Omega a(x, \nabla  u_k )\nabla T_t(u_k)\,dx =
\int_\Omega f_k  T_t(u_k)\,dx  \leq 2 t \|f\|_{L^1(\Omega)}.
\end{align*}

\smallskip
\par\noindent \emph{Step 3.} \emph{Almost everywhere convergence of functions.}
\\  There exists  a function $u\in \mathcal M(\om)$ such that (up to subsequences)
\begin{equation}
\label{conv:ae:uk-to-u}
 u_k \to  u \quad \text{a.e. in }  \Omega .
\end{equation}
Indeed, let $t, \tau >0$. Then
\begin{equation}\label{sep40}
|\{|u_k -u_m|>\tau \}| \leq |\{|u_k|>t\}| + |\{|u_m|>t \}| +
|\{|T_t(u_k) -T_t(u_m)|>\tau \}|
\end{equation}
for $k,m \in \mathbb N$. Fix any $\ep >0$. Inequality
\eqref{Phiapriori} ensures, via inequality \eqref{sep40} of
Proposition \ref{prop:pre-est}, that
\begin{equation}\label{sep45}
|\{|u_k|>t\}| + |\{|u_m|>t \}| < \ep
\end{equation}
for every $k, m \in \mathbb N$, provided $t$ is sufficiently large.
Moreover,  inequality \eqref{Phiapriori} again ensures that the~sequence $\nabla T_t(u_k) $ is bounded in $L^1(\om)$. Hence, the
sequence $T_t(u_k)$ is bounded in $W^{1,1}_0(\om)$ and since the
latter space is compactly embedded into $L^1(\om)$, there exists a
subsequence, still denoted by $\{u_k\}$, such that $T_t(u_k)$
converges to some function in $L^1(\om)$. In particular, it is a
Cauchy sequence in measure, and hence
\begin{equation}\label{sep46}
|\{|T_t(u_k) -T_t(u_m)|>\tau \}|  < \ep
\end{equation}
if $k$ and $m$ are large enough. From inequalites
\eqref{sep40}--\eqref{sep46} we infer that (up to subsequences)
$\{u_k\}$ is a~Cauchy sequence in measure, whence
\eqref{conv:ae:uk-to-u} follows.

\smallskip
\par\noindent \emph{Step  4.} \emph{$\{\nabla u_k\}$  is a Cauchy sequence in measure.} \\
An application of Proposition \ref{prop:aniso-est} with  $f$ and $u$
replaced by $f_k$ and  $u_k$ yields, via \eqref{lim-fk-to-f},
\begin{equation}\label{july51}
\int_\Omega \Theta (\nabla u_k)\,dx\leq c
|\Omega|^{1/n}\|f\|_{L^1(\Omega)}
\end{equation}
for some constant $c=c(n)$ and every $k \in \mathbb N$. Here,
$\Theta$ is the function given by \eqref{andrea3}. Define the
function  $\Theta_- :\rp\to\rp$ by
\begin{equation}\label{M-m}  \Theta_-(s)=\inf_{|\xi|=s}\Theta(\xi).
\end{equation}
Namely, $\Theta_-$ is the largest radially symmetric minorant of
$\Theta$. Note that $\Theta_-$ is a strictly increasing function
vanishing at $0$.
 Let $\ep >0$. Given any $t, \tau, s >0$, one has that
\begin{multline}\label{grad-uk-um}
 |\{\Theta_-(|\nabla u_k-\nabla u_m|)>t\}| \leq  |\{\Theta_-(|\nabla u_k|)>\tau\}|+|\{\Theta_-(|u_m|)>\tau\}|+|\{| u_k - u_m |>s\}| \\  +|\{|  u_k - u_m |\leq s,\,\Theta_-(|\nabla u_k|)\leq\tau,\,\Theta_-(|\nabla
u_m|)\leq \tau,\,\Theta_-(|\nabla u_k-\nabla u_m|)>t\}|\,.
\end{multline}
Owing to inequality \eqref{july51},
\begin{align}\label{sep50}
t|\{\Theta_-(|\nabla u_k|)>t\}|\leq \int_\Omega \Theta_-(|\nabla
u_k|)dx\leq \int_\Omega \Theta(\nabla u_k)dx\leq
c {\color{blue} |\Omega|^{1/n}}\|f\|_{L^1(\Omega)}
\end{align}
for $k \in \mathbb N$. Thus,
\begin{equation}\label{grad-uk'n'um-est}
|\{\Theta_-(|\nabla u_k|)>\tau\}| + |\{\Theta_-(|\nabla
u_m|)>\tau\}|<\ve
\end{equation}
for every $k, m \in \mathbb N$, provided that $\tau $ is large
enough. Next, set
\begin{equation}\label{G}
G=\{|  u_k - u_m |\leq s,\,\Theta_-(|\nabla
u_k|)\leq\tau,\,\Theta_-(|\nabla u_m|)\leq \tau,\,\Theta_-(|\nabla
u_k-\nabla u_m|)>t\}\,,
\end{equation}
and define
\[
S=\{(\xi,\eta)\in\rn\times\rn:\ |\xi|\leq\tau,\ |\eta|\leq \tau,\
|\xi-\eta|\geq t\}\,,
\]
 a compact set. Consider the function
$\psi:\Omega\to[0,\infty)$ given by
\[\psi(x)=\inf_{ (\xi,\eta)\in S}\left[\left(a(x,\xi)-a(x,\eta)\right)\cdot(\xi-\eta)\right].\]
The monotonicity assumption \eqref{A3} and the continuity of the
function $\xi\mapsto a(x,\xi)$ for a.e. $x\in\Omega$ on~the~compact set $S$ ensure that $\psi\geq 0$ in $\Omega$ and
$|\{\psi(x)=0\}|=0$. Moreover,
\begin{align}
\label{psi-a-priori}
\int_G\psi(x)\,dx&\leq \int_G \left(a(x,\nabla u_k)-a(x,\nabla u_m)\right)\cdot (\nabla u_k-\nabla u_m)\,dx\\ \nonumber
&\leq \int_{\{|u_k-u_m|\leq s\}} \left(a(x,\nabla u_k)-a(x,\nabla u_m)\right) \cdot (\nabla u_k-\nabla u_m)\,dx\\ \nonumber
&= \int_{\Omega} \left(a(x,\nabla u_k)-a(x,\nabla u_m)\right)\cdot (\nabla T_s( u_k- u_m))\,dx\\ \nonumber
&= \int_{\Omega} \left(f_k-f_m \right) T_s( u_k- u_m) \,dx\leq
4s\|f\|_{L^1(\Omega)},
\end{align}
where the last but one equality follows on making use of the test
function  $T_s(u_k-u_m) $ in~\eqref{prob:trunc} and in~the~corresponding equation with $k$ replaced by $m$, and subtracting the
resultant equations. Inequality~\eqref{psi-a-priori} and the
properties of the function $\psi$ ensure that, if $s$ is chosen
sufficiently small, then
\begin{equation}\label{G-small}
|\{|  u_k - u_m |\leq s,\,\Theta_-(|\nabla
u_k|)\leq\tau,\,\Theta_-(|\nabla u_m|)\leq \tau,\,\Theta_-(|\nabla
u_k-\nabla u_m|)>t\}|< \ep\,.
\end{equation}
On the other hand,  since $\{u_k\}$ is a Cauchy sequence in measure,
\begin{equation}\label{sep52}
|\{| u_k - u_m |>s\}| < \ep\,,
\end{equation}
if $k$ and $m$ are  sufficiently large. From inequalities \eqref{grad-uk-um},
\eqref{grad-uk'n'um-est}, \eqref{G-small},  and \eqref{sep52}, we infer that $\{\nabla u_k\}$ is a Cauchy sequence in measure.

\smallskip
\par\noindent \emph{Step  5.} \emph{Almost everywhere convergence of gradients.}
\\
  Our aim here  is to show that the function $u$ obtained in Step~3  belongs to the class $\mathcal T ^{1,\Phi}_0(\om)$, and that $\nabla u_k \to \nabla u$ $\hbox{a.e. in $\om$}$
(up to subsequences), where $\nabla u$ denotes the \lq\lq
generalized gradient\rq\rq{}  of $u$ in the sense of the~function $Z_u$
appearing in \eqref{gengrad}.
\\ Since $\{\nabla u_k\}$ is a Cauchy sequence in measure, there exist a subsequence (still indexed by $k$) and a~function
$W\in\mathcal{M}(\Omega; \rn)$ such that
\begin{equation}\label{lim-grad-uk-W}
\nabla u_k\to  W \qquad\text{a.e. in }\Omega.
\end{equation}
We have  to show that
\begin{equation}\label{sep53}
\nabla u = W\,
\end{equation}
and
\begin{equation}\label{sep63}
\chi_{\{|u|<t\}}  W \in L^\Phi(\om ;\rn) \quad \hbox{for every $t>0$.}
\end{equation}
To this purpose, observe that estimate \eqref{Phiapriori} ensures
that, for each fixed $t>0$, the sequence $\{\nabla T_t(u_k)\}$ is
bounded in $L^\Phi(\om ;\rn)$. Hence, by Theorem \ref{theo:dunf-pet}, the
sequence $\{\nabla T_t(u_k)\}$ is compact in $L^\Phi(\om ;\rn)$ with
respect to the weak-$*$ convergence. Since $T_t(u_k) \to T_t(u)$ in
$L^1(\om)$, the function $T_t(u)$ is weakly differentiable, and its
gradient agrees with the weak-$*$ limit of $\{\nabla T_t(u_k)\}$.
\\ Thus, for each fixed $t>0$, there exists a subsequence of $\{u_k\}$, still indexed by $k$, such that
\begin{equation}\label{sep60}
\lim _{k \to \infty} \nabla T_t(u_k)= \lim _{k \to \infty}
\chi_{\{|u_k|<t\}} \nabla u_k = \chi_{\{|u|<t\}} W \quad \hbox{a.e.
in $\om$,}
\end{equation}
and
\begin{equation}\label{sep61}
\lim _{k \to \infty} \nabla T_t(u_k) = \nabla T_t(u) \quad
\hbox{weakly-$*$ in $L^\Phi(\om ;\rn)$.}
\end{equation}
Therefore,  $\nabla T_t(u) = \chi_{\{|u|<t\}} W \quad \hbox{a.e. in $\om$,}$
whence equations \eqref{sep53}  and \eqref{sep63} follow, owing to
\eqref{gengrad}.

\smallskip
\par\noindent \emph{Step   6.} \emph{Uniqueness of the solution.}
\\
Suppose  that $u$ and $\bu$ are approximable solutions to
problem~\eqref{eq:main:f}.   Thus,    there exist sequences
$\{f_k\}$ and $\{\overline f_k\}$ in $L^\infty (\Omega)$, such that
$f_k\to f$ and $\overline f_k\to f$ in $L^1(\Omega)$ and weak
solutions $u_k$ to~\eqref{prob:trunc} and $\bu_k$ to
\begin{equation}\label{prob:trunc-b}
\left\{\begin{array}{cl}
-\dv \, a(x,\nabla \overline u_k) = \overline f_k &\qquad \mathrm{ in}\qquad  \Omega\\
\overline u_k(x)=0 &\qquad \mathrm{  on}\qquad \partial\Omega,
\end{array}\right.
\end{equation}  such that  $u_k\to u$ and $\overline u_k\to \overline u$ a.e. in $\om$.
\\ Fix any $t>0$,  make use of $\vp=T_t(u_k-\overline u_k)$ as a test function in~\eqref{prob:trunc} and~\eqref{prob:trunc-b}, and subtract the resultant equations to obtain
\begin{equation}
\label{diff:u-bu} \int_{\{|u_k- \overline u_k|\leq t\}}(a(x,\nabla
u_k)-a(x,\nabla  \overline u_k))\cdot( \nabla u_k-\nabla \overline
u_k)\, dx=\int_\Omega (f_k- \overline f_k)\,T_t(u_k-\overline u_k)\,
dx
\end{equation}
for every $k\in\N$. The right-hand side of \eqref{diff:u-bu}  tends
to $0$ as $k \to \infty$, since $|T_t(u_k-\overline u_k)|\leq t$.
As shown in Steps 3-5, one has that $u , \overline u \in \mathcal
T^{1,\Phi}_0(\om)$, and   $\{\nabla u_k\}$ and $\{\nabla \overline
u_k\}$ converge (up to subsequences) a.e.~in~$\om$ to the
generalized gradients $\nabla u$ and $\nabla \overline u$,
respectively. Thus, by assumption \eqref{A3} and Fatou's lemma,
passing to the limit in \eqref{diff:u-bu}  tells us that
\[
\int_{\{|u -\overline u |\leq t\}}(a(x,\nabla u )-a(x,\nabla \bu
))\cdot( \nabla u -\nabla  \overline u )\,dx=0.\] Consequently, by  \eqref{A3} again,
$\nabla u =\nabla  \overline u$ a.e. in $\{|u -\overline u |\leq
t\}$ for every $t>0$, whence
\begin{equation} \label{nau=nabu}
\nabla u =\nabla \overline u\quad\text{ a.e. in }\Omega.
\end{equation}
Fix any $t,\tau>0$.
Inequality  \eqref{anisopoinc}, applied to the
function   $T_{\tau}(u-T_t(\overline u))$,
and equation
\eqref{nau=nabu} tell us that
\begin{equation}\label{438}
\int_{\om }\Phi_\circ(c|T_{\tau}(u-T_t(\overline u))|)\, dx \le
\left( \int_{\{t<|u|<t+\tau\} }\Phi( \nabla u)\, dx+
\int_{\{t-\tau<|u|<t\} }\Phi (\nabla u)\, dx \right ),
\end{equation}
where $c= \kappa_1|\Omega|^{-\frac{1}{n}}$, and $\kappa _1$ is the constant appearing in \eqref{anisopoinc}.
We claim that, for each $\tau>0$, the right--hand side of \eqref{438}
converges to 0 as $t\rightarrow\infty$. To verify this claim, choose
the test function $\varphi =T_{\tau}(u_k-T_t(u_k))$ in equation
\eqref{uk-weak}  and deduce that
\begin{equation}\label{439}
\int_{\{t<|u_k|<t+\tau\}}\Phi(\nabla u_k)\, dx\le
\int_{\{t<|u_k|<t+\tau\}} a(x, \nabla u_k) \cdot \nabla u_k\, dx\le
\tau \int_{\{|u_k|>t\}}|f_k|\, dx.
\end{equation}
Passing to the limit as $k\rightarrow\infty$ in \eqref{439}
yields, by Fatou's lemma,
\begin{equation}\label{440}
\int_{\{t<|u|<t+\tau\}}\Phi(\nabla u)\, dx\le\tau
\int_{\{|u|>t\}}|f|\, dx.
\end{equation}
Thereby, the first integral on the right-hand side of \eqref{438}
approaches 0 as $t\rightarrow \infty$. An analogous argument
implies that also the last integral in \eqref{438} tends to 0 as
$t\rightarrow \infty$.  On the other hand,
$$
\lim_{t \to \infty} T_{\tau}(u-\T(\overline u))   =
T_{\tau}(u-\overline u)  \quad \hbox{a.e. in } \om\,.
$$
From \eqref{438}, via Fatou's lemma, we thus infer that
\begin{equation}\label{441}
\int_{\Omega}\Phi_\circ (c |T_{\tau}(u-\overline u)|)\,dx=0
\end{equation}
for~every $\tau>0$. Since $\Phi_\circ$ vanishes only at $0$,
equation \eqref{441}  ensures that  $T_{\tau}(u-\overline u)=0$ a.e.
in $\om$ for every $\tau >0$, whence
$u = \overline u$ a.e. in $\om$.

\smallskip
\par\noindent \emph{Step 7.} \emph{Property \eqref{umarc}  holds. }
\par\noindent
Choosing $t=0$ in inequality \eqref{440} tells us that $u$ satisfies assumption \eqref{podpoziomice} of~Proposition~\ref{prop:pre-est} with $K= \|f\|_{L^1(\om)}$. By Propositions~\ref{prop:pre-est}
and~\ref{prop:grad-est}, the solution $u$ fulfills inequalities
\eqref{|u|>r-inf} and \eqref{grad-u-inf}. These inequalities in turn
imply \eqref{umarc}.
\end{proof}

\begin{proof}[Proof of Theorem \ref{theo:main-mu}]
The proof follows exactly along the same lines as Steps 1--5 and 7 of the proof of Theorem \ref{theo:main-f}. One has just to begin with a sequence $\{f_k\} \subset L^\infty(\om)$, which is weakly-$\ast$ convergent to $\mu$ in the space of measures, and such that $\|f_k\|_{L^1(\om)} \leq 2  \|\mu\|(\Omega)$. Such a sequence can be defined, for~instance, as in \eqref{xid}, with $U(y)dy$ replaced by $d\mu (y)$. Of course, the quantity $\|f\|_{L^1(\om)}$ has then to~be replaced by $ \|\mu\|(\Omega)$ throughout.
\\ Let us just point out that the proof of uniqueness, namely of Step~6 of  Theorem~\ref{theo:main-f}, fails in the present situation since, for instance, it is not guaranteed that the right-hand side of equation~\eqref{diff:u-bu} approaches~$0$ as $k \to \infty$.
\end{proof}

\tocless\section{Compliance with Ethical Standards}\label{conflicts}

\smallskip
\par\noindent
{\bf Funding}. This research was partly funded by:
\\ (i)
Italian Ministry of University and Research (MIUR), Research Project Prin 2015 \lq\lq{}Partial differential equations and related analytic-geometric inequalities\rq\rq{},   number 2015HY8JCC;
\\ (ii) GNAMPA   of the Italian INdAM - National Institute of High Mathematics (grant number not~available);
\\ (iii) NCN Polish  grant,  number 2016/23/D/ST1/01072.

\smallskip
\par\noindent
{\bf Conflict of Interest}. The authors declare that they have no conflict of interest.

\bigskip

\let\oldaddcontentsline\addcontentsline
\renewcommand{\addcontentsline}[3]{}

\let\addcontentsline\oldaddcontentsline

\end{document}